\documentclass[11pt]{amsart}
\usepackage{amsfonts,amssymb,amsthm}
\usepackage{amsmath,amscd}
\usepackage{pstricks}
\usepackage{pstricks,pst-node}
\usepackage{mathrsfs}
\usepackage[all]{xy}

\DeclareMathAlphabet{\mathpzc}{OT1}{pzc}{m}{it}

\oddsidemargin 0truein \evensidemargin 0pt \topmargin 0pt
\textheight 8.5truein
\textwidth 6.3truein

\renewcommand{\subsection}[1]{\vspace{.18in}
\par\noindent\addtocounter{subsection}{1}
\setcounter{equation}{0}{\bf\thesubsection.\hspace{5pt}#1}}

\theoremstyle{definition}
\newtheorem{Rem}[subsection]{Remark}

\theoremstyle{plain}
\newtheorem{Prop}[subsection]{Proposition}
\newtheorem{Thm}[subsection]{Theorem}
\newtheorem{Not}[subsection]{Notation}
\newtheorem{Lem}[subsection]{Lemma}
\newtheorem{Coro}[subsection]{Corollary}
\numberwithin{equation}{subsection}

\newcommand{\ddHa}{\dot{{\mathfrak D}_\vtg}(n)_\sZ}
\newcommand{\ddbfHa}{\dot{\boldsymbol{\mathfrak D}_\vtg}(n)}


\newcommand{\afsfVn}{\mathsf V_\vtg(n)_\sZ}
\newcommand{\afbfsfVn}{\boldsymbol{\mathsf V}_\vtg(n)}

\newcommand{\Kbfj}{K^{\bfj}}
\newcommand{\afPin}{\Pi_\vtg(n)}
\newcommand{\tiafVnp}{{\mathcal V}_\vtg^+(n)}

\newcommand{\afVn}{\mathcal V_\vtg(n)_\sZ}
\newcommand{\afVnp}{\mathcal V_\vtg^+(n)_\sZ}
\newcommand{\afVnm}{\mathcal V_\vtg^-(n)_\sZ}
\newcommand{\afVnz}{\mathcal V_\vtg^0(n)_\sZ}
\newcommand{\afbfVn}{\boldsymbol{\mathcal V}_\vtg(n)}

\newcommand{\afTnrk}{\sT_\vtg(n,r)_{\mathpzc k}}

\newcommand{\afKn}{\mathsf K_\vtg(n)_\sZ}

\newcommand{\afbfKn}{\boldsymbol{\mathsf K}_\vtg(n)}
\newcommand{\afhbfKn}{{\hat{\boldsymbol{\mathsf K}}_\vtg(n)}}

\newcommand{\afbfSn}{{\boldsymbol{\mathcal S}}_\vtg(n)}
\newcommand{\afbfSno}{\boldsymbol{\mathcal S}_\vtg^\oplus(n)}

\newcommand{\tA}{{}^t\!A}


\newcommand{\han}{\subseteq}

\newcommand{\bsa}{{\boldsymbol{a}}}
\newcommand{\bsb}{\boldsymbol{b}}

\newcommand{\lan}{\langle}
\newcommand{\ran}{\rangle}
\newcommand{\dleb}{\left[\!\!\left[}
\newcommand{\leb}{\left[}
\newcommand{\drib}{\right]\!\!\right]}
\newcommand{\rib}{\right]}
\def\dblr#1{{[\![{#1}]\!]}}

\def\br#1{( #1 )}

\def\ddet#1{|\!| #1 |\!|}

\def\ggp#1#2{\left[\kern-3.2pt\left[{#1\atop #2}\right]\kern-3.2pt\right]}

\def\fS{{\frak S}}

\newcommand{\msP}{\mathscr P}

\newcommand{\afmsD}{{\mathscr D}^\vtg}

\newcommand{\affSr}{{\fS_{\vtg,r}}}

\newcommand{\afHr}{{\sH_\vtg(r)_\sZ}}
\newcommand{\afHrk}{{\sH_\vtg(r)_{\mpk}}}

\newcommand{\afbfHr}{{\boldsymbol{\mathcal H}_\vtg(r)}}

\def\sH{{\mathcal H}}

\def\aftisK{{\ti{\mathsf K}_\vtg(n)_{\sZ_1}}}

\def\sL{{\mathcal L}}
\def\sM{{\mathcal M}}

\def\sT{{\mathcal T}}
\def\sU{{\mathcal U}}

\def\sX{{\mathcal X}}

\def\sZ{{\mathcal Z}}

\newcommand{\vtg}{{\!\vartriangle\!}}
\newcommand{\Ha}{{{\mathfrak H}_\vtg(n)_{\sZ}}}

\newcommand{\bfHa}{{\boldsymbol{\mathfrak H}_\vtg(n)}}
\newcommand{\dbfHa}{{\boldsymbol{\mathfrak D}_\vtg}(n)}
\newcommand{\dbfHap}{{\boldsymbol{\mathfrak D}^+_\vtg}(n)}
\newcommand{\dbfHam}{{\boldsymbol{\mathfrak D}^-_\vtg}(n)}
\newcommand{\dbfHaz}{{\boldsymbol{\mathfrak D}^0_\vtg}(n)}
\newcommand{\dHa}{{{\mathfrak D}_\vtg}(n)_\sZ}

\newcommand{\dHak}{{{\mathfrak D}_\vtg}(n)_{\mpk}}
\newcommand{\dHap}{{{\mathfrak D}^+_\vtg}(n)_\sZ}

\newcommand{\dHam}{{{\mathfrak D}^-_\vtg}(n)_\sZ}
\newcommand{\dHaz}{{{\mathfrak D}^0_\vtg}(n)_\sZ}

\newcommand{\mpk}{\mathpzc k}

\def\field{{\mathbb F}}

\def\deg{{\rm deg}}

\newcommand{\Sal}{S_{\alpha}}
\newcommand{\mbzn}{\mathbb Z^{n}}

\newcommand{\mnmod}{\!\!\!\mod\!}

\newcommand{\tri}{\triangle(n)}

\newcommand{\afsl}{\widehat{\frak{sl}}_n}
\newcommand{\afgl}{\widehat{\frak{gl}}_n}

\newcommand{\afE}{E^\vartriangle}

\newcommand{\bbl}{\big[}
\newcommand{\bbr}{\big]}
\newcommand{\dbbl}{\big[\!\!\big[}
\newcommand{\dbbr}{\big]\!\!\big]}

\newcommand{\cycn}{{{n}}}
\newcommand{\afSr}{{\mathcal S}_{\vtg}(\cycn,r)_\sZ}
\newcommand{\afSpA}{{\mathcal S}_{\vtg}(n,pn+\sg(A))_\sZ}
\newcommand{\afSpB}{{\mathcal S}_{\vtg}(n,pn+\sg(B))_\sZ}
\newcommand{\afSpr}{{\mathcal S}_{\vtg}(n,pn+r)_\sZ}
\newcommand{\afSrk}{{\mathcal S}_{\vtg}(\cycn,r)_{\mathpzc k}}

\newcommand{\afsK}{{{\mathsf K}_\vtg(n)_\sZ}}
\newcommand{\afbfSr}{{\boldsymbol{\mathcal S}}_\vtg(\cycn,r)}

\newcommand{\afbse}{\boldsymbol e^\vartriangle}
\newcommand{\afmbnn}{\mathbb N_\vtg^{\cycn}}
\newcommand{\afmbzn}{\mathbb Z_\vtg^{\cycn}}

\newcommand{\afLa}{\Lambda_\vtg}
\newcommand{\afLanr}{\Lambda_\vtg(\cycn,r)}

\newcommand{\afThn}{\Theta_\vtg(\cycn)}
\newcommand{\aftiThn}{\widetilde\Theta_\vtg(\cycn)}

\newcommand{\afThnpm}{\Theta_\vtg^\pm(\cycn)}
\newcommand{\afThnp}{\Theta_\vtg^+(\cycn)}
\newcommand{\afThnm}{\Theta_\vtg^-(\cycn)}

\newcommand{\afThnr}{\Theta_\vtg(\cycn,r)}
\newcommand{\afTh}{\Theta_\vtg}

\newcommand{\afMnz}{M_{\vtg,\cycn}(\mathbb Z)}

\newcommand{\dzr}{\dot{\zeta}_r}
\newcommand{\hzr}{\h{\zeta}_r}
\newcommand{\hz}{\h\zeta}

\newcommand{\dxr}{\dot{\xi}_r}

\newcommand{\ttm}{\mathtt{m}}
\newcommand{\ttn}{\mathtt{m}}
\def\leq{\leqslant}\def\geq{\geqslant}
\def\ge{\geqslant}

\newcommand{\Th}{\Theta}
\newcommand{\dt}{\delta}

\newcommand{\vi}{\varphi}

\newcommand{\up}{v}

  \newcommand{\vep}{\varepsilon}
 \newcommand{\al}{\alpha}
 \newcommand{\bt}{\beta}
 \newcommand{\h}{\widehat}
 \newcommand{\ti}{\widetilde}

\newcommand{\zr}{\zeta_r}

\newcommand{\sg}{\sigma}
\newcommand{\Sg}{\Sigma}
\def\th{\theta}

\newcommand{\p}{\prec}
\newcommand{\pr}{\preccurlyeq}
\newcommand{\bop}{\bigoplus}
\newcommand{\op}{\oplus}
\newcommand{\ot}{\otimes}

\newcommand{\bfl}{\mathbf{0}}

\newcommand{\Ar}{{A,r}}
\newcommand{\mpm}{\mathpzc m}
\newcommand{\mpM}{\mathpzc M}

\newcommand{\ol}{\overline}
\newcommand{\lra}{\longrightarrow}
\newcommand{\ra}{\rightarrow}
 \newcommand{\la}{{\lambda}}

 \newcommand{\La}{\Lambda}

 \newcommand{\mbn}{\mathbb N}
 \newcommand{\mbq}{\mathbb Q}
 \newcommand{\mbc}{\mathbb C}
 \newcommand{\mbz}{\mathbb Z}

  \newcommand{\bfd}{{\mathbf{d}}}
 
 \newcommand{\bfj}{{\mathbf{j}}}

 \newcommand{\bfx}{{\mathbf{x}}}

\newcommand{\bfa}{{\boldsymbol{a}}}
\newcommand{\bfb}{{\boldsymbol{b}}}

\newcommand{\bfU}{{\mathbf{U}}}

\newcommand{\ga}{{\gamma}}

\newcommand{\End}{\operatorname{End}}
\newcommand{\Hom}{\operatorname{Hom}}

\newcommand{\res}{\operatorname{res}}
\newcommand{\spann}{\operatorname{span}}
\newcommand{\diag}{\operatorname{diag}}

\def\ro{\text{\rm ro}}
\def\co{\text{\rm co}}
\def\wh{\widehat}
\def\fkbfH{\boldsymbol{\mathfrak H}}


\newcommand{\bfsg}{{\boldsymbol\sigma}}

\def\afsygr{{\fS_{\vtg,r}}}

\def\dob{{\cdot\!\![}}
\def\dcb{{]\!\!\cdot}}
\def\dop{{\cdot\!\!(}}
\def\dcp{{)\!\!\cdot}}
\def\res{{\text{res}}}
\def\bfpU{{\text{\bf U}_\vtg(n)}}
\def\dbfpU{{\dot{\text{\bf U}}_\vtg(n)}}

\newcommand{\dbfUn}{\dot{{\bfU}}_\vtg(n)}
\newcommand{\dUn}{\dot{{U}_\vtg}(n)_\sZ}

\newcommand{\mpE}{\mathpzc E}

\newcommand{\aftiThnap}{\widetilde\Theta_\vtg^{\rm ap}(\cycn)}
\newcommand{\afThnrap}{\Theta_\vtg^{\rm ap}(\cycn,r)}

\begin{document}
\title{The Integral Quantum loop algebra of $\mathfrak{gl}_n$}

\author{Jie Du}
\address{School of Mathematics and Statistics, University of New South Wales,
Sydney 2052, Australia.} \email{j.du@unsw.edu.au}
\author{Qiang Fu$^\dagger$}
\address{Department of Mathematics, Tongji University, Shanghai, 200092, China.}
\email{q.fu@hotmail.com, q.fu@tongji.edu.cn}


\thanks{$^\dagger$Corresponding author.}
\thanks{Supported by the National Natural Science Foundation
of China, Fok Ying Tung Education Foundation and the Australian Research Council DP120101436.}

\begin{abstract}
We will construct the Lusztig form for the quantum loop algebra of $\mathfrak{gl}_n$ by proving the conjecture \cite[3.8.6]{DDF} and establish partially the Schur--Weyl duality at the integral level in this case. We will also investigate the integral form of the modified quantum affine $\mathfrak{gl}_n$ by introducing an affine stabilisation property and will lift the canonical bases from affine quantum Schur algebras to a canonical basis for this integral form. As an application of our theory, we will also discuss the integral form of the modified extended quantum affine $\mathfrak{sl}_n$ and construct its canonical basis to verify a conjecture of Lusztig in this case.
\end{abstract}
 \sloppy \maketitle
\section{Introduction}
Let $\sZ=\mbz[\up,\up^{-1}]$ be the integral Laurent polynomial ring.
It is well known that the Lusztig form $U_\sZ$ of a quantum enveloping $\mbq(v)$-algebra $\bfU$ associated with a Cartan matrix of finite or affine type is a $\sZ$-free subalgebra generated by divided powers of simple root vectors $E_{\al_i},F_{\al_i}$ together with group-like elements $K_{\al_i^\vee}^{\pm}$. In particular, there is a triangular decomposition
$U_\sZ=U^+_\sZ\cdot U^0_\sZ\cdot U^-_\sZ$ where, in the simply-laced case,
the 0-part $U_\sZ^0$ of this form is generated by $K_{\al_i^\vee}$ and $\bigl[{{K_{\al_i^\vee},0}\atop t}\bigr]$.

We now consider the quantum loop algebra $\bfU(\afgl)$. It contains a proper subalgebra
${}'\bfU=\bfU_\vtg(n)$ generated by $E_i=E_{\al_i},F_i=F_{\al_i}$ and $K_i^{\pm}$, $1\leq i\leq n$, where $
K_iK_{i+1}^{-1}=K_{\al_i^\vee}$ with $K_{n+1}=K_1$. This is called the ``extended'' quantum affine $\mathfrak{sl}_n$ in \cite{DDF} which is also investigated in \cite{Lu99} (cf. the definition in \cite[7.7]{Lu99}). Note that the subalgebra generated by $E_i,F_i$ and $K_{\al_i^\vee}$ is usually called the quantum enveloping algebra of affine
$\mathfrak{sl}_n$ type or the quantum loop algebra of $\mathfrak{sl}_n$ (see, e.g., \cite[9.3]{Lu99} or \cite[\S1.3]{DDF}).
If $'U_\sZ^+$ (resp., $'U_\sZ^-$) denotes the $\sZ$-subalgebra generated by divided powers $E_i^{(m)}$ (resp., $F_i^{(m)}$) and $U_\sZ^0$ denotes the $\sZ$-subalgebra generated by $K_i$ and $\bigl[{{K_i,0}\atop t}\bigr]$ ($t\in\mbn,1\leq i\leq n$), then the $\sZ$-submodule ${}'U_\sZ={}'U^+_\sZ\cdot U^0_\sZ\cdot{}'U^-_\sZ$ is a $\sZ$-free subalgebra of $'\bfU$ which is the Lusztig form of $'\bfU$ mentioned above. Now, naturally, one would ask what is a natural Lusztig form for $\bfU(\afgl)$?

By using Drinfeld's presentation for $\bfU(\afgl)$, a so-called restricted integral form $U^{\text{res}}_v(\afgl)$ was constructed over $\mbc[v,v^{-1}]$ by Frenkel--Mukhin in \cite[\S7.2]{FM}. However,  it is not clear from the construction whether $U^{\text{res}}_v(\afgl)$ is a Hopf algebra. Another integral form is constructed in \cite[2.4.4]{DDF} by using a double Ringel--Hall algebra presentation for $\bfU(\afgl)$. This integral form is the tensor product of the Lusztig form $'U_\sZ$ of $'\bfU$ with an integral central subalgebra. This is a Hopf subalgebra but not large enough to have integral affine quantum Schur algebras as its quotients; see example \cite[5.3.8]{DDF}.

However, there is a natural candidate constructed in \cite[\S3.8]{DDF}. By the double Ringel--Hall algebra presentation, we have a triangular decomposition: $\bfU(\afgl)\cong\dbfHa=\bfHa\cdot\bfU^0\cdot\bfHa^{\text{\rm op}}$,
where $\bfHa$ is a Ringel--Hall algebra over $\mbq(v)$ associated with a cyclic quiver and $\bfU^0=\mbq(v)[K_1^{\pm1},\ldots,K_n^{\pm1}]$ is the 0-part of $\bfU(\afgl)$. The candidate we proposed is to use the (integral) Ringel--Hall algebra $\Ha$ over $\sZ$ and the 0-part $U^0_\sZ$ defined above to form the $\sZ$-free submodule\footnote{It is denoted by $\ti{\mathfrak D}_\vtg(n)$ in \cite[(3.8.1.1)]{DDF}, while $\mathfrak D_\vtg(n)$ denote the tensor product of $'U_\sZ$ with the integral central subalgebra in \cite[2.4.4]{DDF}.} $\dHa:=\Ha\cdot U^0_\sZ\cdot\Ha^{\text{\rm op}}$.
We conjectured in \cite[3.8.6]{DDF} that $\dHa$ is a $\sZ$-subalgebra of $\dbfHa$. If the conjecture is true, then $\dHa$ is a Hopf subalgebra having integral affine quantum Schur algebras as its quotients.

In this paper, we will prove this conjecture. The proof is a beautiful application of a recent resolution of another conjecture, a realisation conjecture for quantum affine $\mathfrak{gl}_n$, by the authors \cite{DF13}, together with some successful attempts in the classical case \cite{Fu13, Fu} (see also \cite{Fu1}). The realisation conjecture is a natural affine generalisation of a new construction for quantum $\mathfrak{gl}_n$ via quantum Schur algebras by
A.A. Beilinson, G. Lusztig and R. MacPherson (BLM)  in \cite{BLM}.
This remarkable work has important applications to the investigation of integral quantum Schur--Weyl reciprocity
\cite{DPS}. This reciprocity at non-roots of unity was formulated in \cite{Jimbo} and its integral version was given in \cite{Du95,DPS}, built on the work \cite{BLM} and the Kazhdan--Lusztig cell theory.

Attempts to generalise the BLM work have been made by Ginzburg--Vasserot \cite{GV}, Lusztig \cite{Lu99}, etc. These constructions are geometric in nature, following BLM's geometric construction, but cannot resolve a realisation for the entire quantum affine $\mathfrak{gl}_n$. The main obstacle is that $\bfU(\afgl)$ cannot be generated by simple root vectors or simple generators. In \cite{DF13}, we discovered certain key multiplication formulas  by semisimple generators via the affine Hecke algebra and affine quantum Schur algebras. This allows, by modifying BLM's approach, to introduce a new algebra $\afbfVn$ by a basis together with explicit multiplication formuas of basis elements by semisimple generators. This algebra is isomorphic to $\dbfHa$ and hence to $\bfU(\afgl)$.

We now construct an integral $\sZ$-subalgebra $\afVn$ of $\afbfVn$ and then prove that the image of $\afVn$ in $\dbfHa$ coincides with $\dHa$. In this way we prove that $\dHa$ is a subalgebra. As an immediate application, the $\mbq(v)$-algebra epimorphism $\zeta_r$ given in \cite[Th.~3.8.1]{DDF}  restricts to a $\sZ$-algebra epimorphism $\zeta_{r}$ from $\dHa$ to the affine quantum Schur algebra $\afSr$. This establishes partially the Schur--Weyl duality at the integral level and, hence, at roots of unity.

There is another application of the key multiplication formulas mentioned above. In \cite{BLM}, the $\mbq(v)$-algebra $\boldsymbol{\sf K}(n)$ was constructed as a result of a stabilisation property. The algebra $\boldsymbol{\sf K}(n)$ is in fact isomorphic to the modified quantum group $\dot\bfU(\mathfrak{gl}_n)$. We will prove that
a stabilisation property continue to hold in the affine case. Thus, we may also introduce a new $\mbq(v)$-algebra $\afbfKn$, which is isomorphic to the modified quantum group $\dot\bfU(\afgl)$, and realise $\bfU(\afgl)$ as a subalgebra of the completion algebra $\afhbfKn$. In this way, we obtain an (unmodified!) affine generalisation of BLM's construction. We will further discuss the integral form $\afKn$ of $\afbfKn$ which is a realisation of $\ddHa$ (see Theorem \ref{realization of ddHa}) and construct its canonical basis as a lifting of the canonical bases for affine quantum Schur algebras. Applying our theory to the extended quantum affine $\mathfrak{sl}_n$, we will introduce the canonical basis for the modified quantum group $\dbfUn$ and verify in this case a conjecture of Lusztig \cite[9.3]{Lu99} which has been already proved in \cite{SV} (cf. \cite[7.9]{Mcgerty}).

The sections of the paper are organised as follows:
\begin{itemize}
\item[1.] Introduction
\item[2.] The double Ringel--Hall algebra presentation
\item[3.] A BLM type presentation
\item[4.] Some integral multiplication formulas
\item[5.] Lusztig form of $\bfU(\afgl)$  and integral affine quantum Schur--Weyl reciprocity
\item[6.] The affine BLM algebra $\afKn$
\item[7.] Canonical bases for the integral modified quantum affine $\frak{gl}_n$
\item[8.] Application to a conjecture of Lusztig.
\end{itemize}

\begin{Not}\label{aftiThn,afThn}\rm
For a positive integer $n$, let
$\afThn$  (resp., $\aftiThn$) be the set of all matrices
$A=(a_{i,j})_{i,j\in\mbz}$ with $a_{i,j}\in\mbn$ (resp. $a_{i,j}\in\mbz$, $a_{i,j}\geq0$ for all $i\neq j$)  such that
\begin{itemize}
\item[(a)]$a_{i,j}=a_{i+n,j+n}$ for $i,j\in\mbz$; \item[(b)] for
every $i\in\mbz$, both sets $\{j\in\mbz\mid a_{i,j}\not=0\}$ and
$\{j\in\mbz\mid a_{j,i}\not=0\}$ are finite.
\end{itemize}
Let $\afmbzn=\{(\la_i)_{i\in\mbz}\mid
\la_i\in\mbz,\,\la_i=\la_{i-n}\ \text{for}\ i\in\mbz\}$ and $\afmbnn=\{(\la_i)_{i\in\mbz}\in \afmbzn\mid \la_i\ge0\text{ for  }i\in\mbz\}.$ We will sometimes identify $\afmbzn$ with $\mbz^n$ via the  natural bijection $\flat:\afmbzn\lra\mbz^n$
defined by sending $\bfj$ to $\flat(\bfj)=(j_1,\cdots,j_n).$
Define an order relation $\leq$ and ``dot'' product on $\afmbzn$ by
\begin{equation}\label{order on afmbzn}
\la\leq\mu  \iff\la_i\leq \mu_i\,(1\leq i\leq n)\quad\text{ and }\quad \la\centerdot\mu=\la_1\mu_1+\cdot+\la_n\mu_n=
\flat(\la)\centerdot\flat(\mu).
\end{equation}
We say that $\la<\mu$ if $\la\leq\mu$ and $\la\not=\mu$.


Let $\mbq(\up)$ be the fraction field
of $\sZ=\mbz[\up,\up^{-1}]$. For integers $N,t$ with $t\geq 0$, define Gausian polynomials and their symmetric version in $\sZ$:
$
\dleb{N\atop t}\drib=\prod\limits_{1\leq
i\leq t}\frac{\up^{2(N-i+1)}-1}{\up^{2i}-1}
\,\,\text{ and }\,\,\leb{N\atop t}\rib=\up^{-t(N-t)}\dleb{N\atop t}\drib.
$
For $\mu\in\afmbzn$ and $\la\in\afmbnn$, let
$\dblr{\mu\atop\la} =\prod_{1\leq i\leq n}\dblr{\mu_i\atop\la_i}$ and let $[{\mu\atop\la}]=
\prod_{1\leq i\leq n}[{\mu_i\atop\la_i}]$. The following identity holds (see \cite[3.3]{Fu1}): for any $\la,\mu\in \afmbnn$ and $\al,\beta\in\afmbzn$,
\begin{equation}\label{SGPidentity}
\aligned
&\bigg[{\al+\beta\atop\la}\bigg]=\sum_{\mu\in\afmbnn,\mu\leq\la}v^{\al\centerdot(\la-\mu)-\mu\centerdot\beta}\bigg[{\al\atop\mu}\bigg]\bigg[{\beta\atop\la-\mu}\bigg];\\
&\bigg[{\al\atop\la}\bigg]\bigg[{\al\atop\mu}\bigg]=\sum_{{\gamma\in\afmbnn}\atop{\gamma\leq\la,\gamma\leq\mu}} v^{\la\centerdot\mu-\al\centerdot\gamma}\bigg[{\al\atop{\la+\mu-\gamma}}\bigg]\bigg[{{\la+\mu-\gamma}\atop{\gamma,\la-\gamma,\mu-\gamma}}\bigg];
\endaligned
\end{equation}
see \cite[Exercises 0.14 and 0.15]{DDPW} for a proof in the case of Gaussian polynomials.
\end{Not}

\section{The Double Ringel--Hall algebra presentation}

Let $\tri$ ($n\geq 2$) be
the cyclic quiver
with vertex set $I=\mbz/n\mbz=\{1,2,\ldots,n\}$ and arrow set
$\{i\to i+1\mid i\in I\}$. Note that we will regard $I$ as an abelian group as well as a subset of $\mbz$ depending on context.

Let $\field$ be a field. For $i\in I$ and $j\in\mbz$ with $i<j$, let $S_i$
denote the one-dimensional representation of $\tri$ with $(S_i)_i=\field$ and $(S_i)_k=0$ for $i\neq k$ and
$M^{i,j}$ the unique indecomposable nilpotent representation of dimension $j-i$ with top $S_i$.
Let
\begin{equation*}
\begin{split}
\afThnp&=\{A\in\afThn\mid a_{i,j}=0\text{ for }i\geq j\}.
\end{split}
\end{equation*}
\begin{Lem}
For any $A=(a_{i,j})\in\afThnp$, let
\begin{equation}\label{isoclass}
M(A)=M_\field(A)=\bop_{1\leq i\leq n,i<j}a_{i,j}M^{i,j}.
\end{equation}
Then $\sM=\{[M(A)]\}_{A\in\afThnp}$ forms a complete set of isomorphism classes of finite dimensional nilpotent representations of $\tri$.
\end{Lem}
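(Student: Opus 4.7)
The plan is to deduce the statement from the Krull--Schmidt theorem together with the standard classification of indecomposable nilpotent representations of the cyclic quiver $\tri$. Concretely, I would first recall (or quickly re-derive) the fact that every finite dimensional nilpotent representation of $\tri$ is a module over the path algebra modulo the ideal generated by some power of the arrow, which is a finite dimensional Nakayama algebra. Its indecomposable modules are parametrised exactly by pairs: a vertex $i\in I$ (the top) and a length $\ell\geq 1$. In the notation of the excerpt these indecomposables are exactly the $M^{i,j}$ for $i\in I$ and $j>i$, and $M^{i,j}\cong M^{i',j'}$ iff $i\equiv i'\pmod n$ and $j-i=j'-i'$.

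Next I would check that for each $A\in\afThnp$ the formula \eqref{isoclass} gives a well-defined finite dimensional nilpotent representation. The direct sum runs only over $1\leq i\leq n$ and $i<j$; the condition $a_{i,j}=0$ for $i\geq j$ is built into $\afThnp$, and the finite-support condition (b) in the definition of $\afThn$ guarantees that for each fixed $i$ in the range $1,\ldots,n$ only finitely many $j>i$ contribute, so $M(A)$ has finite total dimension. Nilpotency is automatic since each summand $M^{i,j}$ is nilpotent.

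For surjectivity of $A\mapsto [M(A)]$, let $M$ be an arbitrary finite dimensional nilpotent representation of $\tri$. By Krull--Schmidt it has a decomposition $M\cong\bigoplus_{1\leq i\leq n,\,j>i}a_{i,j}M^{i,j}$ with uniquely determined multiplicities $a_{i,j}\in\mbn$, and only finitely many $a_{i,j}$ are nonzero. Extend $(a_{i,j})$ to a doubly-indexed family on $\mbz\times\mbz$ by imposing the periodicity $a_{i+n,j+n}=a_{i,j}$ and setting $a_{i,j}=0$ for $i\geq j$. Condition (a) of $\afThn$ then holds by construction; condition (b) follows because the support in the fundamental strip $1\leq i\leq n$ is finite, and periodicity propagates the finiteness to every row and column. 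Hence the resulting matrix $A$ lies in $\afThnp$ and satisfies $[M]=[M(A)]$.

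Injectivity is the shortest step: if $M(A)\cong M(B)$ for $A,B\in\afThnp$, then the Krull--Schmidt uniqueness of the decomposition into $M^{i,j}$'s (for $1\leq i\leq n$, $j>i$) forces $a_{i,j}=b_{i,j}$ on this fundamental domain, and the periodicity condition (a) then forces $A=B$ on all of $\mbz\times\mbz$. The only point that really requires care is to pin down the parametrisation of indecomposables of the cyclic quiver by the pairs $(i,j)$ with $1\le i\le n$ and $i<j$, and to keep the periodicity bookkeeping consistent between the module side (where $i$ is only defined mod $n$) and the matrix side (where it is an actual integer); once that is sorted, the argument is essentially a direct invocation of Krull--Schmidt.
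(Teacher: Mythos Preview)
Your argument is correct and is exactly the standard one: classify the indecomposable nilpotent representations of $\tri$ (uniserial modules $M^{i,j}$, indexed by top $i\in I$ and length $j-i$), then invoke Krull--Schmidt for both existence and uniqueness of the multiplicities, and finally reconcile the $\mbz/n\mbz$-indexing on the module side with the $\mbz$-indexing on the matrix side via the periodicity condition (a).

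The paper itself gives no proof of this lemma; it is stated as a known fact (the classification of nilpotent representations of a cyclic quiver is classical, going back essentially to the description of modules over truncated path algebras/Nakayama algebras). So there is nothing to compare against: your write-up simply supplies the missing standard justification, and nothing in it is incorrect or incomplete.
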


Let $\bfd(A)\in\mbn I=\mbn^n$ be the dimension vector of $M(A)$.
For $\bfa=(a_i)\in\afmbzn$ and $\bfb=(b_i)\in\afmbzn$, the  Euler form  associated with the cyclic quiver $\tri$ is the bilinear form $\lan-,-\ran:\afmbzn\times\afmbzn\ra\mbz$ defined by
\begin{equation*}
\lan \bfa,\bfb\ran=\sum_{i\in I}a_ib_i-\sum_{i\in
I}a_ib_{i+1}.
\end{equation*}

By \cite{Ri93}, for $A,B,C\in\afThnp$,
the Hall polynomial $\vi^{C}_{A,B}\in\mbz[\up^2]$ is defined such
that, for any finite field $\field_q$,
$\vi^{C}_{A,B}|_{\up^2=q}$ is equal to the number of submodules $N$ of
$M_{\field_q}(C)$ satisfying $N\cong M_{\field_q}(B)$ and $M_{\field_q}(C)/N\cong M_{\field_q}(A)$.

The (generic) twisted {\it Ringel--Hall algebra} $\Ha$ of $\tri$ is, by definition, the $\sZ$-algebra spanned by basis
$\{u_A=u_{[M(A)]}\mid A\in\afThnp\}$ whose multiplication is defined by, for all $A,B\in \afThnp$,
$$u_{A}u_{B}=\up^{\lan \bfd (A),\bfd (B)\ran}\sum_{C\in\afThnp}\vi^{C}_{A,B}u_{C}.$$
Base change gives the $\mbq(v)$-algebra
$\bfHa=\Ha\otimes \mbq(\up)$.

We now describe the semisimple generators $u_\la=u_{[S_\la]}$ ($\la\in\afmbnn$) of $\Ha$, where $S_\la:=\op_{i=1}^n\la_iS_i$ is a semisimple representation of $\tri$.\footnote{For emphasising on semisimple generators,  we will use the same notation to denote the matrix in \eqref{isoclass} defining $S_\la$; see, e.g., Theorem
\ref{mul formulas in quantum affine gln}.}

On the set $\sM$ of isoclasses of finite dimensional nilpotent representations of
$\tri$, define a multiplication $*$ by $[M]*[N]=[M*N]$
for any $[M],[N]\in\sM$, where $M*N$ is the generic extension of $M$ by $N$.
By \cite{DD05,Re01} $\sM$ is a monoid
with identity $1=[0]$.

An element $\la$ in $\afmbnn$ is called {\it sincere} if $\la_i>0$ for all $i\in\mbz$. For $1\leq i\leq n$ let  $\afbse_i\in\afmbnn$ be the element
satisfying $(\afbse_i)_j=\dt_{\bar i,\bar j}$ for $j\in\mbz$. Here
$\bar i$ is the congruence class of $i$ modulo $n$.
Let
$$\widetilde I=\{\afbse_1,\afbse_2,\cdots,\afbse_n\}\cup\{\text{all sincere vectors in $\afmbnn$}\}.$$
Let $\ti\Sg$ be the set of words on the alphabet $\ti I$.

There is a natural surjective map $\wp^+:\ti\Sg\ra\afThnp$  (\cite[3.3]{DDX}) by taking $w=\bsa_1\bsa_2\cdots \bsa_m$ to $\wp^+(w)$, where $\wp^+(w)\in\afThnp$ is defined by
 $$[S_{\bsa_1}]*\cdots*[S_{\bsa_m}]=[M(\wp^+(w))].$$

For $A\in\afThnp$, let $$\ti u_A=\up^{\dim \End(M(A))-\dim M(A)}u_A.$$
For $\la\in\afmbnn$ let $\ti u_\la=\ti u_{[S_\la]}$. 
Any word $w=\bsa_1\bsa_2\cdots\bsa_m$ in $\ti\Sg$ can be uniquely expressed in the {\it tight form} $w=\bsb_1^{x_1}\bsb_2^{x_2}\cdots\bsb_t^{x_t}$ where $x_i=1$ if $\bsb_i$ is sincere, and $x_i$ is the number of consecutive occurrences of $\bsb_i$ if $\bsb_i\in\{\afbse_1,\afbse_2,\cdots,\afbse_n\}$.
For $w=\bsa_1\bsa_2\cdots\bsa_m\in\ti\Sg$ with the tight form
$\bsb_1^{x_1}\bsb_2^{x_2}\cdots\bsb_t^{x_t}$, define the associated monomials:
\begin{equation*}
\ti u_{(w)} =\ti u_{x_1\bsb_1}\ti u_{x_2\bsb_2}\cdots\ti u_{x_t\bsb_t}\in\Ha.
\end{equation*}

Following \cite[3.5]{BLM} and \cite{DF10} we may define the order relation $\pr$
on $\afMnz$ as follows.
For $A\in\afMnz$ and $i\not=j\in\mbz$, let\vspace{-2ex}
$$\sg_{i,j}(A)=\begin{cases}\sum\limits_{s\leq i,t\geq j}a_{s,t},\;&\text{ if $i<j$};\\
\sum\limits_{s\geq i,t\leq j}a_{s,t},\; & \text{ if
$i>j$}.\end{cases}\vspace{-2ex}$$
 For $A,B\in\afMnz$, define
\begin{equation}\label{order pref}
B\pr A \text{ if and only if } \sg_{i,j}(B)\leq\sg_{i,j}(A) \text{ for all } i\not=j.
\end{equation}
Put $B\p A$ if $B\pr A$ and, for some pair $(i,j)$ with $i\not=j$,
$\sg_{i,j}(B)<\sg_{i,j}(A)$.

Associated each $A\in\afThnp$ to a {\it distinguished word} $w_A$ (see \cite[(9.1)]{DDX}), the following {\it triangular relation} relative to $\pr$ between the monomial basis $\{\ti u_{(w_A)}\}_{A\in\afThnp}$ and the defining basis $\{\ti u_A\}_{A\in\afThnp}$ holds (see \cite[(9.2)]{DDX}, \cite[6.2]{DF10}):

\begin{Prop}\label{tri Hall}
For $A\in\afThnp$, there exist $w_{A}\in\ti\Sg$ such that $\wp^+(w_{A})=A$ and
\begin{equation}\label{eq tri Hall}
\ti u_{(w_{A})}=\ti u_A+\sum_{B\in\afThnp\atop B\prec A,\,\bfd(A)=\bfd(B)}f_{{B,A}}\ti u_B.\end{equation}
where $f_{{B,A^+}}\in\sZ$. In particular, $\Ha$ is generated by $u_\la$ for $\la\in\afmbnn$ with a monomial basis
$\{\ti u_{(w_{A})}\mid A\in\afThnp\}$.
\end{Prop}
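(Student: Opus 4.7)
Since the assertion is already recorded in \cite[(9.1)--(9.2)]{DDX} and \cite[6.2]{DF10}, the proof amounts to recalling that construction in the present notation. I outline the three steps.

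\emph{Step 1 (construction of $w_A$).} For $A=(a_{i,j})\in\afThnp$ the word $w_A$ is defined by decomposing $[M(A)]$ as an iterated generic extension of semisimple modules. Concretely, let $\ell$ be the maximum value of $j-i$ for which some $a_{i,j}\neq 0$; using associativity of $*$ on the monoid $\sM$ one peels off the long indecomposables $M^{i,i+\ell}$ on this diagonal by reading their composition factors from top to bottom, and then iterates on the matrix that records the remaining diagonals. This produces a sequence of semisimple vectors $\bsb_1,\ldots,\bsb_t\in\widetilde I$ satisfying $[M(A)]=[S_{\bsb_1}]*[S_{\bsb_2}]*\cdots*[S_{\bsb_t}]$, and concatenating them (in tight form) yields $w_A$. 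By the definition of $\wp^+$ this gives $\wp^+(w_A)=A$.

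\emph{Step 2 (triangularity).} Expanding $\ti u_{(w_A)}=\ti u_{x_1\bsb_1}\cdots\ti u_{x_t\bsb_t}$ via the defining Hall-algebra multiplication produces
$$\ti u_{(w_A)}=\sum_{B}c_{B,A}\,\ti u_B,\qquad c_{B,A}\in\sZ,$$
where the sum is restricted to those $B\in\afThnp$ such that $M(B)$ admits a filtration with successive quotients $S_{x_1\bsb_1},\ldots,S_{x_t\bsb_t}$; in particular $\bfd(B)=\bfd(A)$. The crucial input is Reineke's characterisation of generic extensions \cite{Re01}, reformulated via the $\sigma_{i,j}$-invariants in \cite[6.2]{DF10}: among all such $B$, the generic extension $A$ is the unique $\prec$-maximal element, and the normalisation $\ti u_A=v^{\dim\End M(A)-\dim M(A)}u_A$ is chosen precisely so that $c_{A,A}=1$. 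This gives \eqref{eq tri Hall}.

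\emph{Step 3 (monomial basis).} The transition matrix from $\{\ti u_{(w_A)}\}_{A\in\afThnp}$ to $\{\ti u_A\}_{A\in\afThnp}$ is unitriangular with respect to $\prec$, hence invertible over $\sZ$; so $\{\ti u_{(w_A)}\mid A\in\afThnp\}$ is an $\sZ$-basis of $\Ha$. Since each $\ti u_{(w_A)}$ is a product of divided semisimple generators $\ti u_\la$ with $\la\in\afmbnn$, these generate $\Ha$ as an $\sZ$-algebra. The main technical obstacle lies in Step 2, specifically the $\prec$-maximality of the generic extension; this rests on Reineke's degeneration-order analysis for the cyclic quiver, after which the remainder of the argument is purely formal.
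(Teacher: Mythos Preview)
The paper does not prove this proposition; it simply records it as a consequence of \cite[(9.1)--(9.2)]{DDX} and \cite[6.2]{DF10}. Your proposal is a faithful outline of what those references establish, and the three-step structure (construct $w_A$ from a generic-extension decomposition into semisimples, obtain triangularity from the $\prec$-maximality of the generic extension, and deduce the monomial basis from unitriangularity over $\sZ$) is exactly the argument recorded there. One small caveat on Step~1: the heuristic of ``peeling off the longest diagonal'' is not quite the recipe for the distinguished word in \cite[(9.1)]{DDX}; there $w_A$ is obtained by reading successive top (radical) layers of $M(A)$, which produces a canonical sequence of semisimples whose generic extension is $M(A)$. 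Either description yields a word with $\wp^+(w_A)=A$, and the rest of your argument goes through unchanged.
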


The Hall algebra and its opposite algebra can be used to describe the $\pm$-part of quantum affine $\mathfrak{gl}_n$.
Let $\dbfHa$ be the (reduced) double Ringel--Hall algebra of the cyclic quiver  $\tri$ over $\mbq(\up)$ (cf. \cite{X97} and \cite[(2.1.3.2)]{DDF}). Then it has a triangular decomposition:
$$\dbfHa\cong\dbfHap\otimes\dbfHaz\otimes \dbfHam$$
with $\dbfHap= \fkbfH_\vtg(n)$, $\dbfHam= \fkbfH_\vtg(n)^{\text{op}}$, and $\dbfHaz=\mbq(\up)[K_1^{\pm1},\ldots,K_n^{\pm1}]$. We will add superscript $+$ or $-$ to $u_A$, $u_\la$, $u_{(w)}$, etc. for the corresponding
objects in $\dbfHap$ or $\dbfHam$.  Thus, $\dbfHa^\pm$ has basis $\{\ti u_A^\pm\}_{A\in\afThnp}$, generators $u_\la^\pm$, $\la\in\afmbnn$ and monomials $\ti u_{(w)}^\pm$.

Note that it is also natural to use the notation $\{\ti u_A:=\ti u^+_A\}_{A\in\afThnp}$ for a basis for $\dbfHap$ and the notation $\{\ti u_B:=\ti u^-_{^t\!B}\}_{B\in\afThnm}$ for the corresponding basis for $\dbfHam$, where
 $$\afThnm=\{A\in\afThn\mid a_{i,j}=0\text{ for }i\leq j\}.$$
With such notations, the matrix transpose induces the anti-isomorphism
\begin{equation}\label{tau}
 \tau:\dbfHap\lra\dbfHam, \quad\ti u_A\longmapsto \ti u_{\,{}^t\!\!A}.
 \end{equation}


For $A\in\aftiThn$, we write
\begin{equation}\label{A^+,A^-,A^0}
A=A^++A^0+A^-,\qquad A^\pm=A^++A^-
\end{equation}
 where $A^+\in\afThnp$,
$A^-\in\afThnm$ and $A^0$ is a diagonal matrix.

We have the following (not so elegant) presentation for quantum affine $\mathfrak{gl}_n$ via the double Ringel--Hall algebra; see
\cite[2.5.3, 2.6.1, 2.6.7 and 2.3.6(2)]{DDF}.
\begin{Thm} \label{presentation-dbfHa} {\rm(1)}
The (Hopf) algebra $\dbfHa$ is isomorphic to Drinfeld's algebra $\bfU(\afgl)$. It is the algebra over $\mbq(\up)$ which is spanned by basis
$$\{u_{A}^+K^{\bfj} u_{B}^-\mid A,B\in\afThnp,\bfj\in\afmbzn\}, \text{ where }
K^{\bfj}=K_1^{j_1}\cdots K_n^{j_n},$$ and is generated by
$u_\la^+$, $K_{i}^{\pm 1}$, $u_\mu^-$ $(\la,\mu\in\afmbnn,\,1\leq i\leq n)$, and whose multiplication is given by
the following relations:
\begin{itemize}
\item[(a)]
$K_iK_j=K_jK_i$, $K_iK_i^{-1}=1$;
\item[(b)]
$K^{\bfj} u_A^+=\up^{\langle{\bfd(A),\bfj}\rangle}u_A^+K^\bfj$,
$u_A^-K^\bfj=\up^{\langle{\bfd(A),\bfj}\rangle}K^\bfj u_A^-$;
\item[(c)]
$u_\la^+u_A^+=\sum_{C\in\afThnp}\up^{\lan \la,\bfd(A)\ran}\vi_{S_\la,A}^C u_C^+$;
\item[(d)]
$u_\mu^-u_A^-=\sum_{C\in\afThnp}\up^{\lan \bfd(A),\mu\ran}\vi_{A,S_\mu}^C u_C^-$;
\item[(e)] $u_\mu^-u_\la^+-u_\la^+ u_\mu^-=\displaystyle
\sum_{\al\not=0,\,\al\in\afmbnn\atop\al\leq\la,\,\al\leq\mu}\sum_{0\leq\ga\leq\al}
x_{\al,\ga}\ti K^{2\ga-\al} u_{\la-\al}^+u_{\mu-\al}^-,$ where the coefficients $x_{\al,\ga}\in\sZ$ are rather complicated as given in \cite[Cor.~2.6.7]{DDF}.
\end{itemize}

{\rm(2)} There exists a central subalgebra ${\mathbf Z}_\vtg(n)=\mbq(\up)[{\mathsf z}_m^+,{\mathbf z}_m^-]_{m\geq1}$ such that $\dbfHa\cong\bfU_\vtg(n)\otimes {\mathbf Z}_\vtg(n)$, where $\bfU_\vtg(n)$ is the subalgebra generated by $E_i=u^+_{\afbse_i}, F_i=u_{\afbse_i}^-, K_i$ for all $i\in I$.
\end{Thm}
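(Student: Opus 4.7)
The plan is to realise $\dbfHa$ as the reduced Drinfeld double of the (extended) Ringel--Hall algebra $\bfHa$ and then read off the presentation from this construction. First I would equip $\bfHa$, after adjoining a toral copy of $\mbq(v)[K_1^{\pm1},\ldots,K_n^{\pm1}]$, with Green's coproduct so that it becomes a twisted bialgebra, and then define Xiao's skew-Hopf pairing between the extended $\bfHa$ and its opposite. Once this framework is in place, the reduced Drinfeld double is obtained by identifying the two toral halves, and the PBW-type basis $\{u_A^+ K^\bfj u_B^-\mid A,B\in\afThnp,\,\bfj\in\afmbzn\}$ together with the triangular decomposition $\dbfHa\cong\dbfHap\otimes\dbfHaz\otimes\dbfHam$ are immediate consequences of the Drinfeld double construction combined with the basis $\{\ti u_A^\pm\}$ of $\dbfHap$ and $\dbfHam$ coming from Proposition \ref{tri Hall}.

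Relations (a)--(d) then fall out by inspection. Commutativity of the $K_i$'s is built into the toral part; the $\mbn^n$-grading of $\bfHa$ by dimension vectors forces (b); and (c),(d) are simply the defining Hall multiplication re-expressed via the semisimple generators $u_\la^\pm$, yielding the Hall polynomial coefficients $\vi_{S_\la,A}^C$ with the usual Euler form twist.

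The heart of the argument is relation (e). Writing $u_\mu^- u_\la^+$ in PBW order requires expanding the cross-commutator through the coproduct on $\bfHa$ and then contracting with the Hopf pairing; this is a M\"obius-type inversion which, after substantial bookkeeping, produces the coefficients $x_{\al,\ga}\in\sZ$ in the form stated in \cite[Cor.~2.6.7]{DDF}. To identify the resulting algebra with Drinfeld's $\bfU(\afgl)$, I would invoke Schiffmann's theorem matching the double Ringel--Hall algebra of the cyclic quiver with Drinfeld's new realisation of the quantum loop algebra, under which the simple semisimple generators $u_{\afbse_i}^\pm$ and $K_i^{\pm1}$ correspond to the Chevalley-type generators and the remaining $u_A^\pm$ to higher root vectors.

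For part (2), the central subalgebra $\mathbf Z_\vtg(n)=\mbq(v)[\mathsf z_m^+,\mathsf z_m^-]_{m\geq1}$ is produced from the ``imaginary root'' part of the Hall side, namely suitable central combinations of the elements $\ti u_{m\bfdt}^\pm$ with $\bfdt=(1,1,\ldots,1)\in\afmbnn$; once these generators are pinned down, the decomposition $\dbfHa\cong\bfU_\vtg(n)\otimes\mathbf Z_\vtg(n)$ follows because $\bfU_\vtg(n)$ is by definition generated precisely by the Chevalley-type elements, and together with the centre it spans everything. The main obstacle is the explicit determination of the coefficients $x_{\al,\ga}$ in (e): the combinatorial inversion is intricate and requires careful control of the Green coproduct on semisimple modules. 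The identification with Drinfeld's presentation is likewise delicate and rests on Schiffmann's theorem; the remaining pieces are comparatively formal once the Drinfeld double framework is established.
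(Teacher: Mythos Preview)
Your sketch is essentially correct and matches the approach taken in the cited source. Note that the paper itself does \emph{not} prove this theorem: it is quoted verbatim from \cite[2.5.3, 2.6.1, 2.6.7 and 2.3.6(2)]{DDF} as background material, and the paper's own contributions begin only after this presentation is in hand. So there is no ``paper's own proof'' to compare against here.

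That said, your outline is exactly the strategy carried out in \cite{DDF}: the algebra $\dbfHa$ is constructed as the reduced Drinfeld double of the extended Ringel--Hall algebra via Green's coproduct and Xiao's skew-Hopf pairing \cite{X97}, which immediately yields the triangular decomposition and the PBW basis; relations (a)--(d) are structural; relation (e) is the explicit commutation computed in \cite[Cor.~2.6.7]{DDF}; and the identification with $\bfU(\afgl)$ and the central decomposition in part (2) are established in \cite[2.5.3, 2.3.6]{DDF}. One small remark: the isomorphism with Drinfeld's new realisation in \cite{DDF} is developed internally rather than by direct appeal to Schiffmann, though the underlying ideas are of course closely related.
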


We now define a candidate of the Lusztig form of $\dbfHa$.

Let $\dHap\cong\Ha$ (resp., $\dHam\cong\Ha^{\text{op}}$) be the $\sZ$-submodule of $\dbfHa$ spanned by the elements $u^+_A$ (resp., $u^-_A$) for $A\in\afThnp$, and let $\dHaz$ be the $\sZ$-subalgebra of $ \dbfHa$
generated by $K_i^{\pm1}$ and $\leb{K_i;0\atop t}\rib$, for $i\in I$
and $t\in\mbn$, where
$$\bigg[{K_i;0 \atop t}\bigg]=
\prod_{s=1}^t \frac{K_i\up^{-s+1}-K_i^{-1}\up^{s-1}}{\up^s-\up^{-s}}.$$
Let $\dHa=\dHap\dHaz\dHam$. We will prove in Theorem \ref{realization of dHa} that $\dHa$ is a $\sZ$-subalgebra of $\dbfHa$ and give a realisation for $\dHa$.

\section{A BLM type presentation}
We now describe a better presentation for $\dbfHa$.
Let $\afsygr$ be the group consisting of all permutations
$w:\mbz\ra\mbz$ such that $w(i+r)=w(i)+r$ for $i\in\mbz$.
The extended affine Hecke algebra $\afHr$ over $\sZ$ associated to
$\affSr$ is the (unital) $\sZ$-algebra with basis
$\{T_w\}_{w\in\affSr}$, and multiplication defined by
\begin{equation*}
\begin{cases} T_{s_i}^2=(\up^2-1)T_{s_i}+\up^2,\quad&\text{for }1\leq i\leq r\\
T_{w}T_{w'}=T_{ww'},\quad&\text{if}\
\ell(ww')=\ell(w)+\ell(w'),
\end{cases}
\end{equation*}
where $s_i\in\affSr$ is defined by
setting
$s_i(j)=j$ for $j\not\equiv i,i+1\mnmod r$, $s_i(j)=j-1$ for
$j\equiv i+1\mnmod r$ and $s_i(j)=j+1$ for $j\equiv i\mnmod r$. Let $\afbfHr=\afHr\ot_\sZ\mbq(\up)$.

For $\la=(\la_i)_{i\in\mbz}\in\afmbzn$
let
$
\sg(\la)=\sum_{1\leq i\leq n}\la_i$.
For $r\geq 0$  we set
\begin{equation*}
\afLanr=\{\la\in\afmbnn\mid\sg(\la)=r\}.
\end{equation*}
For $\la\in\afLanr$, let $\fS_\la:=\fS_{(\la_1,\ldots,\la_n)}$
be the corresponding standard Young subgroup of $\fS_r$.
For each $\la\in\afLanr$, let
$x_\la=\sum_{w\in\fS_\la}T_w\in\afHr$. The endomorphism algebras
$$\afSr:=\End_{\afHr}\biggl
(\bop_{\la\in\La_\vtg(n,r)}x_\la\afHr\biggr) {\text{ and }}\afbfSr:=\End_{\afbfHr}\biggl
(\bop_{\la\in\La_\vtg(n,r)}x_\la\afbfHr\biggr).$$
are called affine quantum Schur algebras (cf. \cite{GV,Gr99,Lu99}).

For $A\in\aftiThn$ and $r\geq 0$,
let
\begin{equation*}\sg(A)=\sum_{1\leq i\leq n,\,
j\in\mbz}a_{i,j}\quad\text{ and }\quad
\afThnr=\{A\in\afThn\mid\sg(A)=r\}.
\end{equation*}
For $\la\in\afLanr$, let
$\afmsD_\la=\{d\mid d\in\affSr,\ell(wd)=\ell(w)+\ell(d)\text{ for
$w\in\fS_\la$}\}$ and $\afmsD_{\la,\mu}=\afmsD_{\la}\cap{\afmsD_{\mu}}^{-1}$.
By \cite[7.4]{VV99} (see also \cite[9.2]{DF10}), there is
a bijective map
\begin{equation*}
{\jmath_\vtg}:\{(\la, d,\mu)\mid
d\in\afmsD_{\la,\mu},\la,\mu\in\afLanr\}\lra\afThnr
 \end{equation*}
sending $(\la, d,\mu)$ to the matrix $A=(|R_k^\la\cap dR_l^\mu|)_{k,l\in\mbz}$, where
\begin{equation*}
R_{i+kn}^{\nu}=\{\nu_{k,i-1}+1,\nu_{k,i-1}+2,\ldots,\nu_{k,i-1}+\nu_i=\nu_{k,i}\}\; \text{ with }\;\nu_{k,i-1}=kr+\sum_{1\leq t\leq i-1}\nu_t,
\end{equation*}
for all $1\leq i\leq n$, $k\in\mbz$ and $\nu\in\afLa(n,r)$.

For $\la,\mu\in\afLanr$ and $d\in\afmsD_{\la,\mu}$ satisfying $A=\jmath_\vtg(\la,d,\mu)\in\afThnr$, define
$e_A\in\afSr$ by
\begin{equation}\label{def of standard basis}
e_A(x_\nu h)=\dt_{\mu\nu}\sum_{w\in\fS_\la
d\fS_\mu}T_wh,
\end{equation}
where $\nu\in\afLanr$ and $h\in\afHr$, and let
\begin{equation}\label{dA}
[A]=\up^{-d_A}e_{A},\quad\text{ where } \quad
d_{A}=\sum_{1\leq i\leq n\atop i\geq k,j<l}a_{i,j}a_{k,l}.
\end{equation}
Note that the sets $\{e_A\mid A\in\afThnr\}$ and  $\{[A]\mid A\in\afThnr\}$ form $\sZ$-bases for $\afSr$.

Let
$$\afThnpm=\{A\in\afThn\mid a_{i,i}
=0\text{ for all $i$}\}.$$
For $A\in\afThnpm$, $\bfj\in\afmbzn$ and $\la\in\afmbnn$ let
\begin{equation*}
\begin{split}
A(\bfj,r)&=\sum_{\mu\in\afLa(n,r-\sg(A))}\up^{\mu\centerdot\bfj}
[A+\diag(\mu)]\in\afSr.\\
A(\bfj,\la,r)&=\sum_{\mu\in\afLa(n,r-\sg(A))}\up^{\mu\centerdot\bfj}
\leb{\mu\atop\la}\rib[A+\diag(\mu)]\in\afSr
\end{split}
\end{equation*}

The relationship between $\dbfHa$ and $\afbfSr$ can be seen from the following (cf. \cite{GV,Lu99} and \cite[Prop.~7.6]{VV99}).
\begin{Thm}[{\cite[3.6.3, 3.8.1]{DDF}}] \label{zr}
For $r\geq 0$, the map $\zr:\dbfHa\ra \afbfSr$ satisfying
$$\zr(K^\bfj)=0(\bfj,r),\;\zr(\ti u_A^+)=A(\bfl,r),\;\;\text{and}\;\;
\zr(\ti u_A^-)=(\tA)(\bfl,r),$$
for all $\bfj\in \afmbzn$, $A\in \afThnp$ and the transpose $\tA$ of $A$, is a surjective algebra homomorphism.
\end{Thm}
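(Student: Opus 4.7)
The plan is to construct $\zeta_r$ by assembling three pieces that match the triangular decomposition $\dbfHa \cong \dbfHap \otimes \dbfHaz \otimes \dbfHam$, and then to verify the Drinfeld double relations of Theorem \ref{presentation-dbfHa}. First I would define the zero-part map by $K^\bfj \mapsto 0(\bfj, r)$ and extend it multiplicatively on $\dbfHaz = \mbq(\up)[K_1^{\pm 1}, \ldots, K_n^{\pm 1}]$; the key elementary check here is that the $0(\bfj, r) = \sum_{\mu \in \afLanr} v^{\mu\centerdot\bfj}[\diag(\mu)]$ are mutually commuting, invertible idempotent-weighted sums, giving a well-defined algebra map. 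For the positive part, I would invoke the Schur--Weyl-type surjection $\zr^+ : \bfHa \to \afbfSr$ defined by $\ti u_A^+ \mapsto A(\bfl, r)$; here one uses either the geometric construction via convolution on the affine partial flag variety as in \cite{GV,Lu99,VV99} or, combinatorially, the action of the Hall algebra on $\bigoplus_{\la\in\afLanr} x_\la \afbfHr$ via Hecke algebra operators. The negative part map $\zr^-$ is then obtained from $\zr^+$ via the transpose anti-isomorphism $\tau$ of \eqref{tau}.

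The next step is to combine these into a single map on $\dbfHa$ by setting $\zr(u_A^+ K^\bfj u_B^-) := \zr^+(u_A^+)\, \zr^0(K^\bfj)\, \zr^-(u_B^-)$ on the PBW basis, and then check that the cross relations (a)--(e) of Theorem \ref{presentation-dbfHa} hold in $\afbfSr$. Relations (a), (c), (d) reduce to the well-definedness of $\zr^0$ and $\zr^\pm$ separately. Relation (b), $K^\bfj u_A^+ = \up^{\lan \bfd(A), \bfj\ran} u_A^+ K^\bfj$, reduces via the basis expansion to checking for each $A \in \afThnp$ and $\mu \in \afLanr$ that $[\diag(\mu)] \cdot A(\bfl,r) = v^{\la\centerdot\bfj \text{-shift}} A(\bfl,r)\cdot[\diag(\mu)]$ appropriately weighted; this follows from the explicit multiplication rules for $[\diag(\mu)][B]$ in $\afbfSr$, which act by a scalar encoding the row sums of $B$.

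The main obstacle will be the mixed commutator relation (e): verifying that
\[
\zr(u_\mu^-) \zr(u_\la^+) - \zr(u_\la^+) \zr(u_\mu^-) = \sum_{\al \ne 0,\; \al \le \la, \al \le \mu}\sum_{0 \le \ga \le \al} x_{\al,\ga}\, \zr(\ti K^{2\ga - \al})\, \zr(u_{\la-\al}^+)\, \zr(u_{\mu-\al}^-)
\]
inside $\afbfSr$. The strategy is to reduce to the case of semisimple generators $u_\la^\pm, u_\mu^\mp$ (which generate $\dbfHapm$ by Proposition \ref{tri Hall}), and then compute both sides using the explicit product formulas for $A(\bfl,r) \cdot B(\bfl,r)$ in $\afbfSr$ with $A, B$ supported in the strictly upper/lower triangular parts. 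The coefficients $x_{\al,\ga}$ from \cite[Cor.~2.6.7]{DDF} are tailored precisely so that the contributions from diagonal terms $A + \diag(\mu)$ and $B + \diag(\nu)$ with overlapping support combine to reproduce the right-hand side; I would match terms by fixing the resulting matrix shape in $\afThn$ and comparing coefficients, reducing eventually to an identity of Gaussian binomials (using \eqref{SGPidentity}).

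Finally, for surjectivity I would argue that the image contains all $[A]$ for $A \in \afThnr$: for $A$ with $A^0 = 0$ this follows directly from $\zr(\ti u_{A^+}^+ \cdot \ti u_{({}^t\!A^-)}^-)$ modulo lower-order terms in the order $\pr$ of \eqref{order pref}, and then induction on $\pr$ together with the basis triangularity of Proposition \ref{tri Hall} lifts every $[A]$ into the image. Since $\{[A]\}_{A \in \afThnr}$ is a $\sZ$-basis of $\afSr$, surjectivity of $\zr$ onto $\afbfSr$ after base change follows.
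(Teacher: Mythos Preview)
The paper does not prove this statement; it is imported verbatim from \cite[3.6.3, 3.8.1]{DDF} with no argument given, so there is no in-paper proof to compare against. Your outline is the natural strategy and is broadly what the proof in \cite{DDF} does: build $\zeta_r$ from the three pieces of the triangular decomposition, verify the defining relations of Theorem~\ref{presentation-dbfHa}, and deduce surjectivity from a triangular relation in $\afbfSr$.

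Two remarks on the details. First, relation (e) is already formulated for semisimple elements $u_\la^+, u_\mu^-$, so there is no ``reduction to semisimple generators'' to perform---you must verify (e) directly. Your description of how this goes (``the coefficients $x_{\al,\ga}$ are tailored precisely so that\ldots'') is an expectation, not an argument; the actual verification is the technical core of \cite[Ch.~3]{DDF} and requires the explicit multiplication formulas for $[B][A]$ in $\afSr$ (stated here as Proposition~\ref{[B][A]}, proved independently in \cite{DF13}) together with a nontrivial Gaussian-binomial computation. This is a genuine gap in your proposal, though you flag it as the main obstacle. Second, your surjectivity argument is correct and is exactly Proposition~\ref{tri-affine Schur algebras} (also imported from \cite[3.7.7]{DDF}), so you may simply cite that rather than redo the induction on $\pr$.
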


The map $\zr$ defined in Theorem \ref{zr} induce an algebra homomorphism
\begin{equation}\label{zeta}
\zeta=\prod_{r\geq 0}\zr:\dbfHa\ra\afbfSn.
\end{equation}

We now describe the image of $\zeta$.

Let $$\afbfSn=\prod_{r\geq 0}\afbfSr.$$
For $A\in\afThnpm$, $\bfj\in\afmbzn$ and $\la\in\afmbnn$, define elements in $\afbfSn$
\begin{equation*}
A(\bfj)=(A(\bfj,r))_{r\geq 0},\quad A(\bfj,\la)=(A(\bfj,\la,r))_{r\geq 0}.
\end{equation*}
We set, for $A\in\afMnz$ with $a_{i,j}<0$ for some $i\not=j$,
$A(\bfj,\la)=A(\bfj)=0$.

Let $\afbfVn$ be the $\mbq(\up)$-subspace of $\afbfSn$ spanned by
$A(\bfj,\la)$ for $A\in\afThnpm$, $\bfj\in\afmbzn$ and $\la\in\afmbnn.$  By \cite[Lem.~4.1]{DF13}, $\{A(\bfj)\mid A\in\afThnpm,\bfj\in\afmbnn\}$ forms a basis for $\afbfVn$.

\begin{Thm}[{\cite[4.4]{DF13}}]\label{realization}
The $\mbq(v)$-space $\afbfVn$ is a subalgebra of $\afbfSn$. Furthermore, the restriction of $\zeta$  to $\dbfHa$ induces a $\mbq(v)$-algebra isomorphism $\zeta:\dbfHa\ra\afbfVn$. In particular, we have
$$\zeta(K^\bfj)=0(\bfj),\;\zeta(\ti u_A^+)=A(\bfl),\;\;\text{and}\;\;
\zeta(\ti u_A^-)=(\tA)(\bfl),$$
for all $A\in\afThnpm$ and $\bfj\in\afmbzn$.
\end{Thm}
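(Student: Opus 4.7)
The first step is to derive the explicit formulas $\zeta(K^\bfj)=0(\bfj)$, $\zeta(\ti u_A^+)=A(\bfl)$, and $\zeta(\ti u_A^-)=(\tA)(\bfl)$, which are immediate from Theorem~\ref{zr}: since $\zeta=\prod_{r\geq 0}\zr$ and each $\zr$ sends these generators to the $r$-th components $0(\bfj,r)$, $A(\bfl,r)$, and $(\tA)(\bfl,r)$ respectively, assembly over $r$ delivers the stated images. The remaining work then splits into (i) showing $\afbfVn$ is closed under multiplication inside $\afbfSn$, and (ii) showing $\zeta:\dbfHa\to\afbfVn$ is bijective.

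For (i), my plan is to invoke the key multiplication formulas of \cite{DF13}, which compute products in $\afbfSn$ of the semisimple row generators $S_\la(\bfj,\mu)$ and the transposed column generators $({}^t\!S_\la)(\bfj,\mu)$ against an arbitrary basis element $A(\bfj',\la')$, expressing each as an explicit $\mbq(\up)$-linear combination of basis elements $C(\bfj'',\la'')$ with $C\in\afThnpm$. Together with the obvious commutation of the diagonal elements $0(\bfj)$, this yields closure under left and right multiplication by semisimple generators. Closure under products of general basis elements then follows by a triangular induction modelled on Proposition~\ref{tri Hall}: each $A(\bfl,\la)$ can be written as a product of semisimple row/column elements with a diagonal element, up to terms strictly lower in the $\prec$-order.

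For (ii), I would match the PBW basis $\{\ti u_A^+ K^\bfj \ti u_B^-\}_{A,B\in\afThnp,\,\bfj\in\afmbzn}$ of $\dbfHa$ (Theorem~\ref{presentation-dbfHa}) against the basis $\{C(\bfj)\}_{C\in\afThnpm,\,\bfj\in\afmbzn}$ of $\afbfVn$, using the unique decomposition $C=A+\tB$ with $A,B\in\afThnp$. Computing
\[
\zeta(\ti u_A^+ K^\bfj \ti u_B^-) \;=\; A(\bfl)\cdot 0(\bfj)\cdot (\tB)(\bfl)
\]
via the formulas from (i), the expected outcome is
\[
A(\bfl)\cdot 0(\bfj)\cdot (\tB)(\bfl) \;=\; (A+\tB)(\bfj) \,+\, \text{(terms indexed by }C'\prec A+\tB\text{)},
\]
giving a unitriangular transition matrix between the two bases and hence the desired isomorphism.

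The technical core, and the expected main obstacle, is the triangularity claim above: verifying that the leading coefficient is precisely $1$ and that every correction term lies strictly below $A+\tB$ in the $\prec$-order. This demands careful bookkeeping through iterated applications of the semisimple multiplication formulas, together with the twisting commutation $K^\bfj u_A^\pm = \up^{\pm\langle\bfd(A),\bfj\rangle} u_A^\pm K^\bfj$ from Theorem~\ref{presentation-dbfHa}(b). A single clean analysis of how the $\prec$-order behaves under iterated semisimple multiplication should drive both steps (i) and (ii) simultaneously.
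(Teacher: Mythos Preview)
This theorem is quoted from \cite[4.4]{DF13} and carries no proof in the present paper; it is invoked as an external result. So there is no ``paper's own proof'' to compare against here.

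That said, your outline is essentially the argument carried out in \cite{DF13}, and you can see its ingredients reflected later in \emph{this} paper: the triangular relation you seek is precisely Lemma~\ref{tri-Vz}(1) (specialised to $\la=\bfl$), which in turn rests on Proposition~\ref{tri-affine Schur algebras} at each level $r$. Two small corrections: first, the leading coefficient in
\[
A^+(\bfl)\cdot 0(\bfj)\cdot A^-(\bfl)=\up^{\bfj\centerdot\bfsg(A)}A(\bfj)+(\text{lower terms})
\]
is $\up^{\bfj\centerdot\bfsg(A)}$ rather than $1$ (still a unit in $\mbq(\up)$, so the unitriangularity argument survives). Second, the lower terms a priori involve $B(\bfj',\dt)$ with $\dt\neq\bfl$; over $\mbq(\up)$ these rewrite as combinations of $B(\bfj'')$, but you should say so explicitly. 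With those adjustments your bijection between the PBW basis $\{\ti u_{A}^+K^{\bfj}\ti u_{B}^-\}$ and the set $\{C(\bfj)\mid C\in\afThnpm,\ \bfj\in\afmbzn\}$ goes through: the latter family is linearly independent in $\afbfSn$ (a Laurent polynomial in $n$ variables over $\mbq(\up)$ vanishing at all $(v^{\mu_1},\ldots,v^{\mu_n})$, $\mu\in\afmbnn$, must vanish identically), which is exactly what is needed to run the triangular argument in both directions.
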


We shall identify $\dbfHa$ with $\afbfVn$ via the map $\zeta$ and identify $\dbfHa$ with $\bfU(\afgl)$ under the isomorphism given in Theorem~\ref{presentation-dbfHa}. The following better presentation for $\bfU(\afgl)$, called a {\it modified} BLM type realisation of quantum affine $\mathfrak{gl}_n$, is given in \cite[Th.~1.1]{DF13}.

For $T=(t_{i,j})\in\aftiThn$ let $\dt_T=(t_{i,i})_{i\in\mbz}\in\afmbzn,$
the ``diagonal'' of $T$ and let $\ti T=(\ti t_{i,j})$,
where $\ti t_{i,j}=t_{i-1,j}$ for all $i,j\in\mbzn$.

For $A\in\aftiThn$, let
$\ro(A)=\bigl(\sum_{j\in\mbz}a_{i,j}\bigr)_{i\in\mbz}$ and
$\co(A)=\bigl(\sum_{i\in\mbz}a_{i,j}\bigr)_{j\in\mbz}.$

\begin{Thm}\label{mul formulas in quantum affine gln}
The quantum loop algebra $\bfU(\afgl)$ is the $\mbq(\up)$-algebra which is { spanned by} the basis $\{A(\bfj)\mid A\in \Th^\pm_\vtg(n),\bfj\in\mbz^n_\vtg\}$ and { generated by} $0(\bfj)$,
 $\Sal(\bfl)$ and ${}^t\!\Sal(\bfl)$ for all $\bfj\in\afmbzn$ and $\al\in\afmbnn$, where $\Sal=\sum_{1\leq i\leq n}\al_i\afE_{i,i+1}$ and ${}^t\!\Sal$ is the transpose of $\Sal$,  and whose
{ multiplication rules} are given by:
\begin{itemize}
\item[(1)] $0(\bfj'){ A(\bfj)}=\up^{\bfj'\centerdot\ro(A)}{ A(\bfj'+\bfj)}$ and ${ A(\bfj)}0(\bfj')=\up^{\bfj'\centerdot\co(A)}{ A(\bfj'+\bfj)};$
\item[(2)] $\displaystyle \Sal(\bfl){ A(\bfj)}=
\sum_{T\in\afThn\atop\ro(T)=\al}\up^{f_{T}}\prod_{1\leq i\leq n\atop j\in\mbz,\,j\not=i}
\ol{\dleb{a_{i,j}+t_{i,j}-t_{i-1,j}\atop t_{i,j}}\drib}{ (A+T^\pm-\ti T^\pm)(\bfj_T,\delta_T)},$
\end{itemize}
where
$\bfj_T=\bfj+\sum_{1\leq i\leq n}(\sum_{j<i}(t_{i,j}-t_{i-1,j}))\afbse_i$ and
\begin{equation*}
\begin{split}
f_{T}&=\sum_{1\leq i\leq n\atop j\geq l,\,j\not=i}
a_{i,j}t_{i,l}-\sum_{1\leq i\leq n\atop j>l,\,j\not=i+1}a_{i+1,j}t_{i,l}
-\sum_{1\leq i\leq n\atop j\geq l,\,j\not=i}t_{i-1,j}t_{i,l}+\sum_{1\leq i\leq n\atop j>l,\,j\not=i,\,j\not=i+1}t_{i,j}t_{i,l}\\
&\qquad+\sum_{1\leq i\leq n\atop j<i+1}t_{i,j}t_{i+1,i+1}+\sum_{1\leq i\leq n}j_i(t_{i-1,i}-t_{i,i});
\end{split}
\end{equation*}
\begin{itemize}
\item[(3)] $\displaystyle {}^t\!\Sal(\bfl)A(\bfj)=
\sum_{T\in\afThn\atop\ro(T)=\al}v^{f_{T}'}\prod_{1\leq i\leq n\atop j\in\mbz,\,j\not=i}
\ol{\dleb{a_{i,j}-t_{i,j}+t_{i-1,j}\atop t_{i-1,j}}\drib}(A-T^\pm+\ti T^\pm)(\bfj'_T,\delta_{\ti T})$,
\end{itemize}
where $\bfj'_T=\bfj+\sum_{1\leq i\leq n}(\sum_{j>i}(t_{i-1,j}-t_{i,j}))\afbse_i$ and
\begin{equation*}
\begin{split}
f_{T}'&=\sum_{1\leq i\leq n\atop l\geq j,\,j\not=i}
a_{i,j}t_{i-1,l}-\sum_{1\leq i\leq n\atop l>j,\,j\not=i}a_{i,j}t_{i,l}
-\sum_{1\leq i\leq n\atop j\geq l,\,l\not=i}t_{i-1,j}t_{i,l}+\sum_{1\leq i\leq n\atop j>l,\,l\not=i,\,l\not=i+1}t_{i,j}t_{i,l}\\
&\qquad+\sum_{1\leq i\leq n\atop i<j}t_{i,j}t_{i-1,i}+\sum_{1\leq i\leq n}j_i(t_{i,i}-t_{i-1,i}).
\end{split}
\end{equation*}
\end{Thm}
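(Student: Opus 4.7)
The plan is to pass across the isomorphism $\zeta:\dbfHa\xrightarrow{\sim}\afbfVn$ of Theorem~\ref{realization} and carry out the computation inside $\afbfSn=\prod_{r\geq0}\afbfSr$. The spanning claim is then immediate from the basis property of $\{A(\bfj)\mid A\in\afThnpm,\bfj\in\afmbzn\}$ recorded before that theorem. For the generation claim, Theorem~\ref{presentation-dbfHa}(2) together with Proposition~\ref{tri Hall} applied to each of $\dbfHap$ and $\dbfHam$ shows that $\dbfHa$ is generated by the torus elements $K^\bfj$ and by the semisimple vectors $u^{\pm}_\al$ for $\al\in\afmbnn$; Theorem~\ref{realization} identifies these with $0(\bfj)$, $\Sal(\bfl)$ and ${}^t\Sal(\bfl)$ respectively. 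Formula~(1) is a direct computation in each $\afbfSr$: the element $0(\bfj',r)$ is diagonal in the matrix basis and acts on $[A+\diag(\mu)]$ by the scalar $\up^{\bfj'\centerdot(\ro(A)+\mu)}$ on the left and by $\up^{\bfj'\centerdot(\co(A)+\mu)}$ on the right, so extracting the $A$-dependent prefactor and repackaging the sum over $\mu$ into $A(\bfj'+\bfj,r)$ yields~(1).

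The substantive content is formulas~(2) and~(3), which I would prove componentwise in each $\afbfSr$ via the affine Hecke realisation~\eqref{def of standard basis}. The key step is a basic product formula expressing
\[
[\Sal+\diag(\la')]\cdot[A+\diag(\mu)]=\sum_{T}\up^{g_T}\prod_{1\leq i\leq n,\,j\ne i}\ol{\dleb{a_{i,j}+t_{i,j}-t_{i-1,j}\atop t_{i,j}}\drib}[A+T^\pm-\ti T^\pm+\diag(\mu_T)]
\]
in $\afbfSr$, where $T$ ranges over $\afThn$ with $\ro(T)=\al$ (so each $t_{i,j}$ encodes how many of the $\al_i$ simple arrows $i\to i+1$ land in column $j$ of $A$), $\la'$ and $\mu_T$ are forced by row/column matching, and $g_T$ records the Bruhat-length defect coming from the normalisation~\eqref{dA} together with iterated use of the quadratic relation $T_{s_i}^2=(\up^2-1)T_{s_i}+\up^2$ in $\afHr$. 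The Gaussian binomials count the interleavings of the $t_{i,j}$ new arrows with the $a_{i,j}$ preexisting arrows in the $(i,j)$-slot, after migration of $t_{i-1,j}$ arrows upward from row $i-1$ (which is why $A+T^\pm-\ti T^\pm$ appears rather than just $A+T^\pm$). Multiplying this identity by $\up^{\mu\centerdot\bfj}$ and summing over $\mu\in\afLa(n,r-\sg(A))$, one changes variables from $\mu$ to a new diagonal $\mu'$: this produces the weight shift $\bfj_T=\bfj+\sum_i\bigl(\sum_{j<i}(t_{i,j}-t_{i-1,j})\bigr)\afbse_i$, collapses the surviving diagonal Gaussian binomials into the single factor $\leb{\mu'\atop\dt_T}\rib$ via the first identity of~\eqref{SGPidentity}, and reorganises $g_T+\mu\centerdot\bfj$ into $f_T+\mu'\centerdot\bfj_T$. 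Taking the product over $r\geq0$ delivers~(2); formula~(3) then follows either by running the same argument with $\Sal$ replaced by ${}^t\Sal$, or more cheaply by applying the transpose anti-isomorphism~$\tau$ of~\eqref{tau}, which under $\zeta$ interchanges $\Sal(\bfl)\leftrightarrow{}^t\Sal(\bfl)$ and $A(\bfj)\leftrightarrow{}^t\!A(\bfj)$.

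The main obstacle is the explicit evaluation of $f_T$. This requires an inversion-by-inversion bookkeeping as the arrows of $\Sal$ are inserted into a reduced expression for the double coset of $A+\diag(\mu)$: the ``main'' crossings with preexisting arrows contribute $a_{i,j}t_{i,l}-a_{i+1,j}t_{i,l}$; the internal $T$-crossings contribute $-t_{i-1,j}t_{i,l}+t_{i,j}t_{i,l}$; the diagonal shift produces $t_{i,j}t_{i+1,i+1}$; and the interaction of the $\bfj$-twist with $\diag(\mu)\to\diag(\mu')$ yields $\sum_i j_i(t_{i-1,i}-t_{i,i})$. The side conditions $j\ne i$ and $j\ne i+1$ in the defining sums for $f_T$ arise because crossings in the two columns adjacent to the arrow $i\to i+1$ are absorbed by the diagonal shift rather than contributing inversions. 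This combinatorial accounting is the most delicate piece and is where essentially all the technical effort of the proof is concentrated.
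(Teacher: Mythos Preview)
The paper does not supply its own proof of this theorem: it is quoted verbatim from \cite[Th.~1.1]{DF13}. That said, the computation you outline is essentially the one that \cite{DF13} carries out, and the paper itself reproduces the core of it in the proof of Proposition~\ref{formula in Vz} (which specialises to Theorem~\ref{mul formulas in quantum affine gln}(2) at $\la=\bfl$). So your plan is on the right track and matches the intended argument.

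Two corrections worth making. First, your citation for the generation claim is slightly off: it is Theorem~\ref{presentation-dbfHa}(1) (or the triangular decomposition together with Proposition~\ref{tri Hall}) that gives generation of $\dbfHa$ by $K^\bfj$ and $u_\la^\pm$; part~(2) is about the central subalgebra decomposition and does not directly yield semisimple generators. Second, you locate the ``main obstacle'' in the explicit evaluation of $f_T$ via inversion bookkeeping, but this conflates two distinct steps. The inversion bookkeeping is what is needed to establish the basic product formula for $[B][A]$---this is exactly Proposition~\ref{[B][A]}, proved in \cite[3.6]{DF13} and simply quoted here. Once that is in hand, the passage from $\bt(T,A+\diag(\ga))$ to $f_T$ and from the diagonal variable $\ga$ to $\nu=\ga+\dt_T-\dt_{\ti T}$ is a purely algebraic reorganisation, carried out explicitly in the proof of Proposition~\ref{formula in Vz}: one checks $\bt_{A,T}+\dt_T\centerdot\dt_T+\bfj\centerdot(\dt_{\ti T}-\dt_T)=f_T$ and $\bt_{\nu,T}+\nu\centerdot(\bfj-\dt_T)=\nu\centerdot\bfj_T$. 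No further use of \eqref{SGPidentity} is needed at this stage (that identity enters only when one passes to the more general $A(\bfj,\la)$). Finally, your suggestion to obtain (3) from (2) via the map $\tau$ of \eqref{tau} is not quite licit as stated: $\tau$ is only an anti-isomorphism $\dbfHap\to\dbfHam$, not an anti-automorphism of $\dbfHa$, so one cannot simply apply it to the mixed product $\Sal(\bfl)A(\bfj)$. The paper (and \cite{DF13}) instead proves (3) by the parallel computation using Proposition~\ref{[B][A]}(2).
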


\section{Some integral multiplication formulas}

 Let $\bar\ :\sZ\ra\sZ$ be the ring homomorphism defined by $\bar \up=\up^{-1}$.
The following result is proved in \cite[3.6]{DF13}.

\begin{Prop} \label{[B][A]}
Let  $A\in\afThnr$ and $\al,\ga\in\afmbnn$.

$(1)$ If $B\in\afThnr$ satisfies that $B-\sum\limits_{1\leq i\leq n}\al_i\afE_{i,i+1}$ is a diagonal matrix and $\co(B)=\ro(A)$, then in $\afSr$:
$$[B][A]=\sum_{T\in\afThn,\,\ro(T)=\al \atop
a_{i,j}+t_{i,j}-t_{i-1,j}\geq0,\,\forall i,j}v^{\bt(T,A)}\prod_{1\leq i\leq n\atop j\in\mbz}\ol{\dleb{a_{i,j}+t_{i,j}-t_{i-1,j}\atop t_{i,j}}\drib}[A+T-\ti T],$$
where $\bt(T,A)=\sum_{1\leq i\leq n,\,j\geq l}(a_{i,j}-t_{i-1,j})t_{i,l}-\sum_{1\leq i\leq n,\,j>l}(a_{i+1,j}-t_{i,j})t_{i,l}$.

$(2)$ If $C\in\afThnr$ satisfies that $C-\sum_{1\leq i\leq n}\ga_i\afE_{i+1,i}$ is a diagonal matrix and $\co(C)=\ro(A)$, then in $\afSr$:
$$[C][A]=\sum_{T\in\afThn,\,\ro(T)=\ga\atop
a_{i,j}-t_{i,j}+t_{i-1,j}\geq 0,\,\forall i,j}v^{\bt'(T,A)}\prod_{1\leq i\leq n\atop j\in\mbz}\ol{\dleb{a_{i,j}-t_{i,j}+t_{i-1,j}\atop t_{i-1,j}}\drib}[A-T+\ti T],$$
where $\bt'(T,A)=\sum_{1\leq i\leq n,\,l\geq j}(a_{i,j}-t_{i,j})t_{i-1,l}-\sum_{1\leq i\leq n,\,l>j}(a_{i,j}-t_{i,j})t_{i,l}$.
\end{Prop}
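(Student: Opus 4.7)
The plan is to prove both formulas directly in $\afSr$ by exploiting the Hecke-algebra description $e_B(x_\nu h) = \dt_{\co(B),\nu} \sum_{w \in \fS_{\ro(B)} d_B \fS_{\co(B)}} T_w h$ from \eqref{def of standard basis}, and then converting to the normalized basis via $[A] = \up^{-d_A} e_A$.

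For part (1), the hypothesis that $B - \sum_i \al_i \afE_{i,i+1}$ is diagonal with $\co(B) = \ro(A)$ determines $B$ uniquely from $\al$ and $A$: one forces $b_{i,i} = \ro(A)_i - \al_{i-1}$, and we may assume $\al_{i-1} \le \ro(A)_i$ throughout (otherwise $[B]=0$ and the identity is vacuous). Under the bijection $\jmath_\vtg$, $B$ corresponds to a distinguished representative $d_B \in \afmsD_{\ro(B),\ro(A)}$ whose action on the double-coset picture simply moves an ``$\al_i$-block'' from column $i+1$ into row $i$. The core computation is then to evaluate $e_B \circ e_A$ on $x_{\ro(A)}$, expand $\sum_{w \in \fS_{\ro(B)} d_B \fS_{\ro(A)}} T_w \cdot e_A(x_{\ro(A)})$, and regroup terms according to the double coset in which each summand lands. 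Each such double coset corresponds to a matrix of the form $A + T - \ti T$, where $T = (t_{i,j})$ with $\ro(T) = \al$ records how the $\al_i$ elements moved from row $i$ of $B$ distribute across the columns of $A$, with $t_{i,j}$ of them landing in column $j$. The positivity constraint $a_{i,j} + t_{i,j} - t_{i-1,j} \ge 0$ is automatic (else the resulting matrix entry is negative), and the symmetric Gaussian coefficient $\overline{\dleb{a_{i,j} + t_{i,j} - t_{i-1,j}\atop t_{i,j}}\drib}$ emerges via a standard $q$-enumeration of distinct decompositions on the intersecting Young subgroups.

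The exponent $\bt(T,A)$ arises by combining (i) the length contributions of the Hecke generators $T_w$ that cross existing blocks of $A$ during the regrouping, with (ii) the normalization difference $d_B + d_A - d_{A+T-\ti T}$ between the standard basis $\{e_A\}$ and the renormalized basis $\{[A]\}$. For part (2), the approach is parallel with $C$ in place of $B$: now $C$ moves ``downward'' blocks of size $\ga_i$ from column $i$ into row $i+1$, and the resulting matrices take the form $A - T + \ti T$. One might hope to deduce (2) from (1) by composing with the matrix-transpose anti-involution on $\afSr$, but the shifting convention $\ti T = (t_{i-1,j})$ transforms asymmetrically under transposition (since ${}^t \afE_{i,i+1} = \afE_{i+1,i}$ but the ``shift by one row'' structure flips), so a direct parallel computation seems cleaner. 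The main technical obstacle in both parts is the closed-form verification of $\bt(T,A)$ and $\bt'(T,A)$: one must carefully track inversions among the permutations together with the $\up^{-d_A}$ normalization shift, and the affine cyclic indexing introduces genuine complications beyond the classical BLM setting \cite{BLM}.
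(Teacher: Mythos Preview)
The paper does not prove this proposition at all: it is stated with the remark ``The following result is proved in \cite[3.6]{DF13}'' and then simply quoted as input for the subsequent arguments. So there is no ``paper's own proof'' to compare against here.

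Your proposal is a from-scratch derivation via the Hecke-algebra realisation of $\afSr$, which is the natural route and is presumably close in spirit to what is carried out in \cite{DF13}. The outline is sound: the shape of the answer (sum over $T$ with $\ro(T)=\al$, matrices $A+T-\ti T$, Gaussian binomials counting the ways the moved block distributes over columns) is exactly what one expects from decomposing the product of double-coset sums $T_{\fS_\la d_B\fS_\mu}\cdot T_{\fS_\mu d_A\fS_\nu}$ into double cosets for $(\fS_\la,\fS_\nu)$. Two cautions. First, your sketch is honest that the closed form of $\bt(T,A)$ is the hard part; in practice this bookkeeping is where most errors occur, and you have not actually verified it---so as written this is a plan, not a proof. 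Second, the remark that one ``might hope'' to get (2) from (1) by transpose but that the shift behaves asymmetrically is slightly off: there \emph{is} an anti-automorphism $[A]\mapsto[{}^t\!A]$ on $\afSr$ (see \cite[1.11]{Lu99}, used later in the paper for Theorem~\ref{canonical basis for afKn}(3)), and with care one can deduce (2) from (1) this way after reindexing; a direct parallel computation is also fine, but the transpose route is not blocked.
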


We now derive some integral version of the multiplication formulas.

\begin{Prop}\label{formula in Vz}
Let $A\in\afThnpm$, $\Sal=\sum_{1\leq i\leq n}\al_i\afE_{i,i+1}$ and
${}^t\!\Sal=\sum_{1\leq i\leq n}\al_i\afE_{i+1,i}$ with $\al\in\afmbnn$. Let $\la,\mu\in\afmbnn$, $\bfj,\bfj'\in\afmbzn$. The following identities holds in $\afbfSn$:
\begin{equation*}
\begin{split}
(1)\quad 0(\bfj',\mu)A(\bfj,\la)&=\sum_{\nu\in\afmbnn,\,\nu\leq\mu}a_\nu A(\bfj'+\bfj-\nu,\la+\mu-\nu);\qquad\qquad\qquad\qquad\qquad\qquad\qquad\quad\ \\ \end{split}
\end{equation*}
where
$$a_\nu=\sum_{\bfj''\in\afmbnn\atop\nu-\la\leq\bfj''\leq\nu}
v^{\ro(A)\centerdot(\bfj'+\mu-\bfj'')+\la\centerdot(\mu-\bfj'')}
\leb{\ro(A)\atop\bfj''}\rib\leb{\la+\mu-\nu\atop\nu-\bfj'',\,\la-\nu+\bfj'',\,
\mu-\nu}\rib;$$
\begin{equation*}
\begin{split}
(2)\quad \Sal(\bfl) A(\bfj,\la)
&=\sum_{T\in\afThn,\,\ro(T)=
\al\atop\bt,\eta\in\afmbnn,\,\bt\leq\dt_T,\,\bt+\eta\leq\la}
g_{\bt,\eta,T}\cdot
(A+T^\pm-\ti T^\pm)(\bfj_T+\la-\eta-2\bt,\dt_T+\eta),\qquad\quad\ \
\end{split}
\end{equation*}
where
$$g_{\bt,\eta,T}=\up^{f_{T}+(\eta+\bt)\centerdot(2\dt_T-\dt_{\ti T})}\leb{\dt_{\ti T}-\dt_T\atop\la-\eta-\bt}\rib\leb{\dt_T+\eta\atop\bt,\,\dt_T-\bt,\,\eta}\rib\prod_{1\leq i\leq n\atop j\not=i,\,j\in\mbz}\ol{\dleb{a_{i,j}+t_{i,j}-t_{i-1,j}\atop t_{i,j}}\drib}\in\sZ,$$
and $\bfj_T$, $f_T$ are defined as in Theorem~\ref{mul formulas in quantum affine gln}(2);
\begin{equation*}
\begin{split}
(3)\quad {}^t\!\Sal(\bfl) A(\bfj,\la)
&=\sum_{T\in\afThn,\,\ro(T)=
\al\atop\bt,\eta\in\afmbnn,\,\bt\leq\dt_{\ti T},\,\bt+\eta\leq\la}
g'_{\bt,\eta,T}\cdot
(A-T^\pm+\ti T^\pm)(\bfj_T'+\la-\eta-2\bt,\dt_{\ti T}+\eta),\qquad\quad\ \
\end{split}
\end{equation*}
where
$$g'_{\bt,\eta,T}=\up^{f'_{T}+(\eta+\bt)\centerdot(2\dt_{\ti T}-\dt_{T})}\leb{\dt_{T}-\dt_{\ti T}\atop\la-\eta-\bt}\rib\leb{\dt_{\ti T}+\eta\atop\bt,\,\dt_{\ti T}-\bt,\,\eta}\rib\prod_{1\leq i\leq n\atop j\not=i,\,j\in\mbz}\ol{\dleb{a_{i,j}-t_{i,j}+t_{i-1,j}\atop t_{i-1,j}}\drib}\in\sZ,$$
and $\bfj'_T$, $f_{T}'$ are defined as in Theorem~\ref{mul formulas in quantum affine gln}(3).
The same formulas hold in $\afSr$ with $A(\bfj,\la)$ etc. replaced by $A(\bfj,\la,r)$, etc.
\end{Prop}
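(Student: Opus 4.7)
My plan is to prove all three formulas inside $\afSr$ for each $r\ge0$ and then assemble them into $\afbfSn=\prod_{r\ge0}\afSr$; since the sums on the right-hand sides are finite after restriction to a single $\afSr$, the passage from $\afSr$ to $\afbfSn$ is immediate.  Expanding the definitions gives $0(\bfj',\mu,r)=\sum_{\nu\in\afLanr}\up^{\nu\centerdot\bfj'}\leb{\nu\atop\mu}\rib[\diag(\nu)]$, $\Sal(\bfl,r)=\sum_{\nu}[\Sal+\diag(\nu)]$ (since $\bfl=\mathbf{0}$), analogously for ${}^t\!\Sal(\bfl,r)$, and $A(\bfj,\la,r)=\sum_{\mu'}\up^{\mu'\centerdot\bfj}\leb{\mu'\atop\la}\rib[A+\diag(\mu')]$.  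Each formula therefore reduces to multiplying two such basis sums and repackaging the result via the Gaussian binomial identities \eqref{SGPidentity}.

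For formula (1), the product $[\diag(\nu)][A+\diag(\mu')]$ vanishes unless $\nu=\ro(A)+\mu'$, collapsing the double sum to a single one indexed by $\mu'$.  The binomial $\leb{\ro(A)+\mu'\atop\mu}\rib$ is then split using the first identity in \eqref{SGPidentity} with $\al=\ro(A)$, $\beta=\mu'$, $\la=\mu$, producing an auxiliary index $\nu\le\mu$ together with an inner $\leb{\mu'\atop\mu-\nu}\rib$.  Combining this with the existing $\leb{\mu'\atop\la}\rib$ via the second identity of \eqref{SGPidentity} merges the two $\mu'$-binomials into $\leb{\mu'\atop\la+\mu-\nu-\ga}\rib$ times a multinomial depending only on $\la,\mu,\nu,\ga$.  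The remaining inner sum over $\mu'$ is then exactly the expansion of $A(\bfj'+\bfj-\nu,\la+\mu-\nu,r)$, and collecting the $\mu'$-independent prefactors (after reindexing $\nu-\ga$ as $\bfj''$) produces the coefficient $a_\nu$ displayed in the statement.

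For (2), apply Proposition \ref{[B][A]}(1) to each product $[\Sal+\diag(\nu)][A+\diag(\mu')]$, where the non-vanishing constraint $\co(\Sal+\diag(\nu))=\ro(A)+\mu'$ determines $\nu$ uniquely.  The proposition produces, for each $T\in\afThn$ with $\ro(T)=\al$, a shifted matrix $A+T-\ti T+\diag(\mu'+\dt_T-\dt_{\ti T})$ weighted by a product of overlined $\up$-binomials and an exponent that reduces to the $f_T$ of Theorem \ref{mul formulas in quantum affine gln}(2) plus a linear $\mu'$-correction.  Setting $\eta'=\mu'+\dt_T-\dt_{\ti T}$ to pass to the new diagonal index and applying the first identity of \eqref{SGPidentity} to rewrite $\leb{\mu'\atop\la}\rib=\leb{\eta'-\dt_T+\dt_{\ti T}\atop\la}\rib$ introduces the summation indices $\bt\le\dt_T$ and $\bt+\eta\le\la$; the second identity of \eqref{SGPidentity} then combines the resulting binomials into the multinomial $\leb{\dt_T+\eta\atop\bt,\,\dt_T-\bt,\,\eta}\rib$.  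Collecting all $\up$-exponents and the weight $\up^{\eta'\centerdot\bfj_T}$ needed to express the result as $(A+T^\pm-\ti T^\pm)(\bfj_T+\la-\eta-2\bt,\dt_T+\eta,r)$ yields exactly the coefficient $g_{\bt,\eta,T}$.  Formula (3) is obtained by the entirely parallel argument using Proposition \ref{[B][A]}(2) and Theorem \ref{mul formulas in quantum affine gln}(3); equivalently one may deduce it by applying the anti-isomorphism $\tau$ of \eqref{tau} (in its affine Schur algebra incarnation) to (2).

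The main obstacle is the simultaneous tracking of four sources of $\up$-exponents: the ``geometric'' exponent from Proposition \ref{[B][A]}, the weight $\up^{\mu'\centerdot\bfj}$ from the definition of $A(\bfj,\la,r)$, the powers generated by the two applications of \eqref{SGPidentity}, and the inner weight $\up^{\eta'\centerdot\bfj_T}$ that must materialise on the output side to recognise a term of the form $(\cdot)(\cdot,\cdot,r)$.  The key bookkeeping identity $\mu'\centerdot\bfj=(\eta'+\dt_{\ti T}-\dt_T)\centerdot\bfj$, combined with the definition of $\bfj_T$ from Theorem \ref{mul formulas in quantum affine gln}(2), absorbs the $\bfj$-translation into the shift $\bfj\mapsto\bfj_T+\la-\eta-2\bt$ and isolates the residual exponent $(\eta+\bt)\centerdot(2\dt_T-\dt_{\ti T})$ that appears in $g_{\bt,\eta,T}$; an analogous tally, with the roles of $T$ and $\ti T$ swapped, handles $g'_{\bt,\eta,T}$.
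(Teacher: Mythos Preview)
Your proposal is correct and follows essentially the same route as the paper: work at level $r$, expand the definitions, use the vanishing condition $\co(\cdot)=\ro(\cdot)$ to collapse one sum, apply Proposition~\ref{[B][A]} for (2) and (3), make the change of variable $\eta'=\mu'+\dt_T-\dt_{\ti T}$ (the paper writes it as $\nu=\gamma+\dt_T-\dt_{\ti T}$), and then use the two identities in \eqref{SGPidentity} in the same order to produce the multinomial and recognise the output as an $A(\cdot,\cdot,r)$ term. The only cosmetic difference is that the paper records the intermediate decomposition $\beta(T,A+\diag(\gamma))=\beta_{A,T}+\beta_{\nu,T}$ and the explicit identities $\beta_{A,T}+\dt_T\centerdot\dt_T+\bfj\centerdot(\dt_{\ti T}-\dt_T)=f_T$ and $\beta_{\nu,T}+\nu\centerdot(\bfj-\dt_T)=\nu\centerdot\bfj_T$ to carry out the exponent bookkeeping you describe in your final paragraph, and it only says ``similarly'' for (3) rather than invoking $\tau$.
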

\begin{proof} The fact $[A][B]\neq0\implies\ro(B)=\co(A)$ gives
$$0(\bfj',\mu,r)A(\bfj,\la,r)=\sum_{\al\in\La_\vtg(n,r-\sigma(A)}v^{(\ro(A)+\al)\centerdot\bfj'+\al\centerdot\bfj}\bigg[{\ro(A)+\al\atop\mu}\bigg]\bigg[{\al\atop\la}\bigg][A+\diag(\al)].$$
Applying \eqref{SGPidentity} yields the required formula. For more details, see \cite[3.4]{Fu1}.

Similarly, by Proposition~\ref{[B][A]}, the left hand side of (2) at level $r$ becomes
\begin{equation*}
\begin{split}
\Sal(\bfl,r) A(\bfj,\la,r) &=\sum_{\ga\in\afLa(n,r-\sg(A))}\up^{\ga\centerdot\bfj}\leb{\ga\atop\la}\rib
\left[S_\al+\diag\left(\ga+\ro(A)-\sum_{1\leq i\leq n}\al_i\afbse_{i+1}\right)\right][A+\diag(\ga)]\\
&=\sum_{T\in\afThn \atop \ro(T)=\al }
\prod_{1\leq i\leq n\atop j\in\mbz,\,j\not= i}\ol{\dleb{a_{i,j}+t_{i,j}-t_{i-1,j}\atop t_{i,j}}\drib}x_T
\end{split}
\end{equation*}
where
\begin{equation*}
\begin{split}
x_T&=
\sum_{\ga\in\afLa(n,r-\sg(A))}\up^{\ga\centerdot\bfj+\bt(T,A+\diag(\ga))}
\leb{\ga\atop\la}\rib
\ol{\dleb{\ga+\dt_T-\dt_{\ti T}\atop\dt_T}\drib}[A+T^\pm-\ti T^\pm+\diag(\ga+\dt_T-\dt_{\ti T})].
\end{split}
\end{equation*}
Let $\nu=\ga+\dt_T-\dt_{\ti T}$. Then
$\bt(T,A+\diag(\ga))=\bt_{A,T}+\bt_{\nu,T}$,
where $\bt_{\nu,T}=\sum_{1\leq i\leq n,\,i\geq l}\nu_it_{i,l}-\sum_{1\leq i\leq n,\,i+1>l}\nu_{i+1}t_{i,l}$ and
\begin{equation*}
\begin{split}
\bt_{A,T}&=\sum_{1\leq i\leq n\atop j\geq l,\,j\not=i}(a_{i,j}-t_{i-1,j})t_{i,l}-\sum_{1\leq i\leq n\atop j>l,\,j\not=i+1}a_{i+1,j}t_{i,l}+\sum_{1\leq i\leq n\atop j>l,\,j\not=i,i+1}t_{i,j}t_{i,l}\\
&\qquad -\sum_{1\leq i\leq n}t_{i,i}^2+\sum_{1\leq i\leq n\atop i+1>l}t_{i+1,i+1}t_{i,l}.
\end{split}
\end{equation*}
Furthermore, we have $\ol{\dbbl{\nu\atop\dt_T}\dbbr}=\up^{\dt_T\centerdot(\dt_T-\nu)}
\bbl{\nu\atop \dt_T}\bbr$,
$\bt_{A,T}+\dt_T\centerdot\dt_T+\bfj\centerdot(\dt_{\ti T}-\dt_T)=f_{T}$ and $\bt_{\nu,T}+\nu\centerdot(\bfj-\dt_T)=\nu\centerdot\bfj_T$.
This implies that
$$x_T=\sum_{\nu\in\afLa(n,r-\sg(A+T^\pm-\ti T^\pm))}\up^{f_{T}+\nu\centerdot\dt_{T,\bfj}}
\leb{\nu\atop\dt_T}\rib\cdot\leb{\nu-\dt_T+\dt_{\ti T}\atop\la}\rib
[A+T^\pm-\ti T^\pm+\diag(\nu)].$$
Applying the identities in \eqref{SGPidentity} yields
\begin{equation*}
\begin{split}
\leb{\nu\atop\dt_T}\rib\cdot\leb{\nu-\dt_T+\dt_{\ti T}\atop\la}\rib
&=\sum_{\bfx\in\afmbnn\atop\bfx\leq\la}\up^{\nu\centerdot(\la-\bfx)
-\bfx\centerdot(\dt_{\ti T}-\dt_T)}
\leb{\nu\atop\dt_T}\rib
\cdot\leb{\nu\atop\bfx}\rib\cdot\leb{\dt_{\ti T}-\dt_T\atop\la-\bfx}\rib\\
&=\sum_{\bfx,\bt\in\afmbnn,\,\bt\leq\dt_T
\atop\bt\leq\bfx\leq\la}\up^{\nu\centerdot(\la-\bfx-\bt)
+\bfx\centerdot(2\dt_T-\dt_{\ti T})}
\leb{\dt_{\ti T}-\dt_T\atop\la-\bfx}\rib
\leb{\dt_T+\bfx-\bt\atop\bt,\,\dt_T-\bt,\,\bfx-\bt}\rib \\
&\qquad\quad \times \leb{\nu\atop
\dt_T+\bfx-\bt}\rib.
\end{split}
\end{equation*}
Thus,
\begin{equation*}
\begin{split}
x_T&= \sum_{\bfx,\bt\in\afmbnn,\,\bt\leq\dt_T\atop
\bt\leq\bfx\leq\la}\up^{f_T+\bfx\centerdot(2\dt_T-\dt_{\ti T})}
\leb{\dt_{\ti T}-\dt_T\atop\la-\bfx}\rib
\leb{\dt_T+\bfx-\bt\atop\bt,\,\dt_T-\bt,\,\bfx-\bt}\rib\\
&\qquad\qquad\times(A+T^\pm-\ti T^\pm)(\bfj_T+\la-\bfx-\bt,\dt_T+\bfx-\bt)\\
&= \sum_{\eta,\bt\in\afmbnn,\,\bt\leq\dt_T\atop
\bt+\eta\leq\la}\up^{f_T+(\eta+\bt)\centerdot(2\dt_T-\dt_{\ti T})}
\leb{\dt_{\ti T}-\dt_T\atop\la-\eta-\bt}\rib
\leb{\dt_T+\eta\atop\bt,\,\dt_T-\bt,\,\eta}\rib\\
&\qquad\qquad\times(A+T^\pm-\ti T^\pm)(\bfj_T+\la-\eta-2\bt,\dt_T+\eta,r)
\end{split}
\end{equation*}
Consequently, (2) holds. Formula (3) can be proved similarly.
\end{proof}

\section{Lusztig form of $\bfU(\afgl)$ and integral affine quantum Schur--Weyl reciprocity}

We are now ready to determine the Lusztig form of $\bfU(\afgl)$ by proving the conjecture \cite[3.8.6]{DDF}.

Let $\afVn$ be the $\sZ$-submodule of $\afbfSn$ spanned by
$\{A(\bfj,\la)\mid A\in\afThnpm,\,\bfj\in\afmbzn,\,\la\in\afmbnn\}.$ As seen above, $\afVn$ is a $\sZ$-submodule of $\afbfVn$.
Our aim is to show that $\afVn$ is a realisation of $\dHa$ (see Theorem~\ref{realization of dHa} below). The following result is \cite[4.8]{Fu1}.

\begin{Lem}\label{basis for afVn}
The set $\{A(\bfj,\la)\mid A\in\afThnpm,\,
\bfj,\la\in\afmbnn,\,j_i\in\{0,1\},\forall i\}$ forms a $\sZ$-basis for $\afVn.$
\end{Lem}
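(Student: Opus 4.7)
The lemma asserts both spanning and $\sZ$-linear independence of the proposed set. Using the defining expansion
\[
A(\bfj,\la)=\sum_{\mu\in\afmbnn}\prod_{i=1}^n\up^{\mu_ij_i}\leb{\mu_i\atop\la_i}\rib\,[A+\diag(\mu)]
\]
and the fact that $\{[B]\mid B\in\afThn\}$ is a $\sZ$-basis of each $\afSr$, both parts decouple, coordinate by coordinate, into statements about the single-variable functions $m\mapsto\up^{mj}\leb{m\atop t}\rib$ from $\mbn$ to $\sZ$.

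For the spanning claim, my plan is to show that every $\up^{mj}\leb{m\atop t}\rib$ lies in the $\sZ$-span $V^0$ of $\{\leb{m\atop t'}\rib,\,\up^m\leb{m\atop t'}\rib\mid t'\geq 0\}$. Writing $[k]:=(\up^k-\up^{-k})/(\up-\up^{-1})$ for the quantum integer, I will start from the $\la=1$, $\mu=t$ specialisation of the second identity in \eqref{SGPidentity},
\[
\leb{m\atop 1}\rib\leb{m\atop t}\rib=\up^t[t+1]\leb{m\atop t+1}\rib+\up^{t-m}[t]\leb{m\atop t}\rib.
\]
Combining this with $\up^{2m}-1=(\up-\up^{-1})\up^m\cdot\up^{m-1}\leb{m\atop 1}\rib$ and the simplification $(\up-\up^{-1})\up^t[t]=\up^{2t}-1$, a careful expansion produces the two integral identities
\[
\up^{2m}\leb{m\atop t}\rib=\up^{2t}\leb{m\atop t}\rib+(\up^{2t+1}-\up^{-1})\up^m\leb{m\atop t+1}\rib,
\]
\[
\up^{-m}\leb{m\atop t}\rib=\up^{-2t}\up^m\leb{m\atop t}\rib+(\up^{-2t-1}-\up)\leb{m\atop t+1}\rib.
\]
These will exhibit $V^0$ as stable under multiplication by $\up^{\pm m}$: the products $\up^m\cdot\leb{m\atop t'}\rib$ and $\up^{-m}\cdot\up^m\leb{m\atop t'}\rib$ lie in $V^0$ trivially, while $\up^m\cdot\up^m\leb{m\atop t'}\rib=\up^{2m}\leb{m\atop t'}\rib$ and $\up^{-m}\cdot\leb{m\atop t'}\rib$ lie in $V^0$ by the displayed identities. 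An induction on $|j|$ will then place every $\up^{mj}\leb{m\atop t}\rib$ in $V^0$, and applying this coordinate-wise writes each $A(\bfj,\la)$ as a $\sZ$-linear combination of elements of the claimed basis.

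For linear independence, I plan to start from a hypothetical relation $\sum_{A,\bfj,\la}c_{A,\bfj,\la}A(\bfj,\la)=0$ in $\afbfSn$. Comparing coefficients on the $\sZ$-basis $\{[A+\diag(\mu)]\}$ will yield, for each $A\in\afThnpm$ and every $\mu\in\afmbnn$, the identity $\sum_{\bfj,\la}c_{A,\bfj,\la}\prod_i\up^{\mu_ij_i}\leb{\mu_i\atop\la_i}\rib=0$. Factoring coordinate-wise will reduce the problem to showing that $\{\up^{m\epsilon}\leb{m\atop t}\rib\mid\epsilon\in\{0,1\},\,t\geq 0\}$ is $\sZ$-linearly independent as functions $\mbn\to\sZ$. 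For this I would treat $X=\up^m$ as a formal indeterminate and observe that $\leb{m\atop t}\rib$ is a Laurent polynomial in $X$ over $\mbq(\up)$ with $X$-support exactly $[-t,t]$, so $\up^{m\epsilon}\leb{m\atop t}\rib=X^\epsilon\leb{m\atop t}\rib$ has support $[-t+\epsilon,t+\epsilon]$. Since the endpoint pair uniquely recovers $(\epsilon,t)$, these Laurent polynomials will have pairwise distinct leading and trailing monomials, giving $\mbq(\up)$-linear and hence $\sZ$-linear independence.

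The main technical obstacle is the derivation of the two displayed reduction identities with coefficients purely in $\sZ$. A naive expansion of $\up^{2m}\leb{m\atop t}\rib$ starting from $\up^{2m}-1=(\up-\up^{-1})\up^{2m-1}\leb{m\atop 1}\rib$ leaves a residual $\up^{2m}\leb{m\atop t+1}\rib$ term with $m$-dependent coefficient. The decisive step will be to rewrite the prefactor as $\up^m\cdot\up^{m-1}$ and let the leading $\up^m$ cancel the $\up^{-m}$ hidden inside $\up^{t-m}[t]\leb{m\atop t}\rib=\up^t\up^{-m}[t]\leb{m\atop t}\rib$ coming from the product formula. This cancellation eliminates the $m$-dependence from every coefficient; afterwards the routine simplification $(\up-\up^{-1})\up^t[t]=\up^{2t}-1$ produces the stated integral formulas, and the rest of the proof reduces to the straightforward induction and the support-interval count described above.
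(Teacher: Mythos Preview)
Your argument is correct and does what the paper itself defers to a citation: the paper simply observes that the $0$-part of $\bfU(\afgl)$ coincides with that of $\bfU(\mathfrak{gl}_n)$ and invokes \cite[4.2]{Fu1}, whereas you supply the underlying computation directly. The reduction to single-variable functions $m\mapsto \up^{mj}\leb{m\atop t}\rib$ and the two displayed recursion identities are exactly the content of that reference, so the approaches agree.

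Two small cosmetic points. First, the line $\up^{2m}-1=(\up-\up^{-1})\up^m\cdot\up^{m-1}\leb{m\atop 1}\rib$ is a slip; the correct identity is $\up^{2m}-1=(\up-\up^{-1})\up^{m}\leb{m\atop 1}\rib$, and with this correction your derivation of the two boxed recursions goes through verbatim (I checked both). Second, in the linear-independence step you say $\leb{m\atop t}\rib$ has $X$-support \emph{exactly} $[-t,t]$; in fact there are gaps (e.g.\ $\leb{m\atop 1}\rib=(X-X^{-1})/(\up-\up^{-1})$), but your argument only uses the extreme degrees $-t$ and $t$, and those are genuinely nonzero, so nothing is lost. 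One could also note that distinct pairs $(\epsilon,t)$ need not give distinct top degrees (e.g.\ $(0,t)$ and $(1,t-1)$ both have top degree $t$), but their bottom degrees then differ, so the usual ``peel off the extremal term'' induction still works.
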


\begin{proof}
Since the 0-part of $\bfU(\afgl)$ is the same as that of $\bfU(\mathfrak{gl}_n)$, the proof in the finite case
{\cite[4.2]{Fu1}} carries over.
\end{proof}

Let $\afVnp=\spann_\sZ\{A(\bfl)\mid A\in\afThnp\}$,
$\afVnm=\spann_\sZ\{A(\bfl)\mid A\in\afThnm\}$ and $\afVnz=\spann_\sZ\{0(\bfj,\la)\mid \bfj\in\afmbzn,\,\la\in\afmbnn\}$.
By Proposition~\ref{formula in Vz}(1),
$\afVnz$ is a $\sZ$-subalgebra of $\afbfSn$.

\begin{Lem}\label{Vzp-Vzm} The $\sZ$-module $\afVnp$ (resp., $\afVnm$) is a subalgebras of $\afbfSn$ which
 is generated by $(\sum_{1\leq i\leq n}\al_i\afE_{i,i+1})(\bfl)$ (resp., $(\sum_{1\leq i\leq n}\al_i\afE_{i+1,i})(\bfl)$) for $\al\in\afmbnn$ as a $\sZ$-algebra.
\end{Lem}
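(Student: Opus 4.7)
The plan is to realize $\afVnp$ (resp.~$\afVnm$) as the image of the Ringel--Hall algebra $\Ha$ (resp.~$\Ha^{\mathrm{op}}$) under the isomorphism $\zeta$ of Theorem~\ref{realization}. By that theorem one has $\zeta(\ti u_A^+) = A(\bfl)$ for every $A \in \afThnp$. Since $\{\ti u_A^+\}_{A\in\afThnp}$ is a $\sZ$-basis of $\Ha$, and since $\{A(\bfl)\}_{A\in\afThnp}$ both spans $\afVnp$ by definition and is $\mbq(\up)$-linearly independent inside $\afbfVn$ (being part of the basis of $\afbfVn$ recorded before Theorem~\ref{realization}), the restriction $\zeta|_\Ha \colon \Ha \to \afVnp$ is a $\sZ$-module isomorphism carrying one distinguished basis to the other.

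To verify that $\afVnp$ is closed under the multiplication of $\afbfSn$, fix $A,B \in \afThnp$. Because $\zeta$ is a $\mbq(\up)$-algebra homomorphism into the subalgebra $\afbfVn \subseteq \afbfSn$, we have
\[
A(\bfl)\cdot B(\bfl) \;=\; \zeta(\ti u_A^+)\cdot \zeta(\ti u_B^+) \;=\; \zeta\bigl(\ti u_A^+\cdot \ti u_B^+\bigr).
\]
Since $\Ha$ is a $\sZ$-subalgebra of $\bfHa$, the product $\ti u_A^+\cdot\ti u_B^+$ lies in $\Ha$, so its image lies in $\zeta(\Ha)=\afVnp$. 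Hence $\afVnp$ is a $\sZ$-subalgebra of $\afbfSn$, and $\zeta|_\Ha$ is in fact an isomorphism of $\sZ$-algebras.

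The generation claim now reduces to a statement about $\Ha$. By Proposition~\ref{tri Hall} (the triangular relation between the monomial basis $\{\ti u_{(w_A)}\}$ and the defining basis $\{\ti u_A\}$), the algebra $\Ha$ is generated over $\sZ$ by the semisimple classes $u_\la$ for $\la\in\afmbnn$, equivalently by $\ti u_\la$ since they differ by a unit of $\sZ$. Applying $\zeta$ sends these generators to $\bigl(\sum_{1\leq i\leq n}\la_i\afE_{i,i+1}\bigr)(\bfl) = \Sal(\bfl)$, which therefore generates $\afVnp$ as a $\sZ$-algebra.

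The argument for $\afVnm$ is entirely parallel. One may either repeat the above using the formula $\zeta(\ti u_A^-) = (\tA)(\bfl)$ of Theorem~\ref{realization} to identify $\afVnm$ with $\zeta(\dHam)$, or transport the conclusion through the anti-isomorphism $\tau$ of~\eqref{tau} after noting that $\dHam\cong\Ha^{\mathrm{op}}$ is generated by the same semisimple classes. There is no real obstacle here: the whole content sits in Theorem~\ref{realization}, which matches the integral bases on both sides, and in Proposition~\ref{tri Hall}, which supplies the semisimple generation of $\Ha$. The more intricate integral multiplication formulas of Section~4, while indispensable elsewhere, are not required for this lemma.
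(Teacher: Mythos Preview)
Your proof is correct and follows essentially the same approach as the paper: identify $\afVnp$ with $\zeta(\dHap)=\zeta(\Ha)$ via Theorem~\ref{realization}, deduce that it is a $\sZ$-subalgebra, and then invoke Proposition~\ref{tri Hall} to obtain generation by the semisimple elements. Your write-up is simply a more detailed expansion of the paper's two-sentence argument.
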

\begin{proof}

Since $\dHap=\Ha$ is a $\sZ$-subalgebra of $\dbfHa$ and $\tiafVnp=\zeta(\dHap)$ by Theorem~\ref{realization}, we conclude the first assertion which together with Proposition~\ref{tri Hall} gives the second assertion.
\end{proof}

We now recall the triangular relation for affine quantum Schur algebras.
For $A,B\in\aftiThn$ define
\begin{equation}\label{order sqsubset}
B\sqsubseteq A\text{ if and only if $B\pr A$, $\co(B)=\co(A)$ and $\ro(B)=\ro(A)$.}
\end{equation}
 Put $B\sqsubset A$ if $B\sqsubseteq A$ and $B\not=A$. According to \cite[6.1]{DF10}  the order relation $\sqsubseteq$ is a partial order relation on $\aftiThn$ with finite intervals $(-\infty,A]$ for all $A$; see Lemma~\ref{finite} below.

For $A\in\aftiThn$ with $\sg(A)=r$, we denote $[A]=0\in\afSr$ if $a_{i,i}<0$ for some $i\in\mbz$.
 For $A\in\aftiThn$ let $\bfsg(A)=(\sg_i(A))_{i\in\mbz}\in\afmbnn$ where $\sg_i(A)=a_{i,i}+\sum_{j<i}(a_{i,j}+a_{j,i})$.
The following triangular relation for affine quantum Schur algebras is given in \cite[3.7.7]{DDF}. The first assertion can be seen easily from the proof of loc. cit.
\begin{Prop}\label{tri-affine Schur algebras}
For $A\in\afThnpm$ and $\la\in\afLanr$, we have
\begin{equation*}
A^+(\bfl,r)[\diag(\la)]A^-(\bfl,r)
=[A+\diag(\la-\bfsg(A))]+\text{a $\sZ$-linear comb. of $[A']$ with $A'\sqsubset A$}.
\end{equation*}
In particular, the set $$\{A^+(\bfl,r)[\diag(\la)]A^-(\bfl,r)\mid A\in\afThnpm,\,\la\in\afLanr,\,\la\geq\bfsg(A)\}$$ forms a $\sZ$-basis for $\afSr$, where the order relation $\leq$ is defined in \eqref{order on afmbzn}.
\end{Prop}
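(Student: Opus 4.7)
The plan is to reduce the triple product $A^+(\bfl,r)[\diag(\la)]A^-(\bfl,r)$ to a binary product using the idempotent-like behaviour of $[\diag(\la)]$, and then extract the leading term by iterated application of Proposition~\ref{[B][A]}.

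First, I would note that in $\afSr$ one has $[B][\diag(\la)]=\delta_{\co(B),\la}[B]$ and $[\diag(\la)][C]=\delta_{\ro(C),\la}[C]$ for any basis elements, since $[\diag(\la)]=e_{\diag(\la)}$ acts as the identity endomorphism on $x_\la\afHr$ and zero on the other components. Expanding $A^\pm(\bfl,r)=\sum_\mu[A^\pm+\diag(\mu)]$, exactly one $\mu$ on each side produces a nonzero term, yielding
\[
A^+(\bfl,r)[\diag(\la)]A^-(\bfl,r)=[B^+]\cdot[B^-],
\]
where $B^+=A^++\diag(\la-\co(A^+))$ and $B^-=A^-+\diag(\la-\ro(A^-))$. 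A direct calculation from the definition of $\sg_i(A)$ gives $\co(A^+)+\ro(A^-)=\bfsg(A)$, so the constraint $\la\geq\bfsg(A)$ is exactly what makes the leading term $[A+\diag(\la-\bfsg(A))]$ a nonzero basis element.

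Next, I would decompose $[B^+]$ as a product of semisimple factors $\Sal(\bfl,r)$ via Proposition~\ref{tri Hall}, and apply Proposition~\ref{[B][A]}(2) repeatedly to commute each such factor past $[B^-]$. The resulting expansion is indexed by matrix-valued parameters $T$ recording how entries of $B^+$ merge with entries of $B^-$. The choice $T=0$ (no interaction) contributes exactly $[A+\diag(\la-\bfsg(A))]$ with coefficient $1$, since the off-diagonal parts of $B^+$ and $B^-$ combine to give $A^++A^-=A$ and the two diagonal corrections combine to give $\la-\bfsg(A)$. Any nonzero $T$ moves mass from an off-diagonal entry $(i,j)$ onto the diagonal while preserving row and column sums, and a direct inspection of \eqref{order pref} shows that at least one partial sum $\sg_{k,l}$ strictly decreases; hence the corresponding matrix $A'$ satisfies $A'\sqsubset A+\diag(\la-\bfsg(A))$. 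This establishes the triangular relation.

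For the basis assertion, I would exhibit the bijection
\[
\{(A,\la)\in\afThnpm\times\afLanr\mid\la\geq\bfsg(A)\}\ \longleftrightarrow\ \afThnr,\qquad (A,\la)\longmapsto A+\diag(\la-\bfsg(A)),
\]
whose inverse sends $C\in\afThnr$ to the pair consisting of its off-diagonal part $C-C^0$ and the vector $(c_{ii})_{i\in\mbz}+\bfsg(C-C^0)$. Combining this bijection with the unitriangular relation established above and the fact that $\{[C]\mid C\in\afThnr\}$ is a $\sZ$-basis for $\afSr$, a standard unitriangular base-change argument yields the claimed basis. The main obstacle is the strict $\sqsubset$-decrease for every nonzero $T$: this requires careful bookkeeping of the modifications induced by Proposition~\ref{[B][A]}(2) on the partial sums $\sg_{k,l}$, and relies on the fact that the ordering $\sqsubseteq$ was designed precisely so that off-diagonal-to-diagonal mass migrations are strictly monotone.
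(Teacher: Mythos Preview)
The paper does not supply a proof here; it simply cites \cite[3.7.7]{DDF} and remarks that the first assertion can be read off from the argument there. Your outline is therefore an independent attempt, and the overall strategy—reduce the triple product to a binary one and then expand via the multiplication formulas of Proposition~\ref{[B][A]}—is the natural one and is in the same spirit as the cited proof.

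That said, several points in your sketch need repair. First, Proposition~\ref{tri Hall} gives a triangular expression for $\ti u^+_{A^+}$, hence under $\zeta_r$ for $A^+(\bfl,r)$, not for the single basis element $[B^+]$; the passage works because $[B^+][B^-]=A^+(\bfl,r)[B^-]$ automatically, but you should say so. Second, the semisimple factors $S_\al(\bfl,r)$ are upper triangular, so the relevant formula is Proposition~\ref{[B][A]}(1), not (2). Third, and more seriously, the phrase ``the choice $T=0$'' is not meaningful: in Proposition~\ref{[B][A]}(1) the index $T$ is constrained by $\ro(T)=\al$, so $T=0$ never occurs unless $\al=0$. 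The leading contribution at each step comes from a specific nontrivial $T$ (the one that pushes each $\al_i$ onto the correct superdiagonal position), and identifying it—together with checking that every other $T$ produces a matrix strictly $\sqsubset$ the target—is precisely the inductive computation carried out in \cite[3.7.3--3.7.7]{DDF}. You flag this as ``the main obstacle,'' and it is; your sketch does not yet execute it, and without it the triangularity claim is unproven.

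The bijection in your final paragraph and the unitriangular base-change deduction of the basis statement are correct.
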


For $w\in\ti\Sg$, let
\begin{equation*}
\begin{split}
\ttn_{(w)}^+&=\zeta(\ti u^+_{(w)})\in\afbfSn\quad\text{ and }\quad
\ttn_{(w)}^-=\zeta(\ti u^-_{(w)})\in\afbfSn.
\end{split}
\end{equation*}
The triangular relation for affine quantum Schur algebras can be lifted to the $\afbfSn$ level as follows.

\begin{Lem}\label{tri-Vz}
Let $A\in\afThnpm$, $\bfj\in\afmbzn$ and $\la\in\afmbnn$.

$(1)$ We have
$$A^+(\bfl) 0(\bfj,\la) A^-(\bfl)=
\sum_{\dt\in\afmbnn\atop\dt\leq\la}\up^{(\bfj-\dt)\centerdot\bfsg(A)}
\leb{\bfsg(A)\atop\la-\dt}\rib A(\bfj+\la-\dt,\dt)+f$$
where $f$ is a $\sZ$-linear combination of $B(\bfj',\dt)$ such that $B\in\afThnpm$, $B\p A$, $\dt\in\afmbnn$ and $\bfj'\in\afmbzn$. In particular,
We have $\afVn=\afVnp\afVnz\afVnm$.

$(2)$ There exist $w_{A^+},w_{A^-}\in\ti\Sg$ such that $\wp^+(w_{A^+})=A^+$, $\wp^-(w_{A^-}):={}^t\wp^+(w_{{}^t\!\!A^-})=A^-$ and
$$\ttn^+_{(w_{A^+})}0(\bfj,\la)\ttn^-_{(w_{A^-})}=
\sum_{\dt\in\afmbnn\atop\dt\leq\la}\up^{(\bfj-\dt)\centerdot\bfsg(A)}
\leb{\bfsg(A)\atop\la-\dt}\rib A(\bfj+\la-\dt,\dt)+g$$
where $g$ is a $\sZ$-linear combination of $B(\bfj',\dt)$ such that $B\in\afThnpm$, $B\p A$, $\dt\in\afmbnn$ and $\bfj'\in\afmbzn$.
\end{Lem}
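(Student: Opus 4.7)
The plan is to derive $(1)$ by working level-by-level in $\afSr$, combining Proposition \ref{tri-affine Schur algebras} with the combinatorial identities in \eqref{SGPidentity}, and to deduce $(2)$ from $(1)$ by applying $\zeta$ to the triangular relation of Proposition \ref{tri Hall}.

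At each level $r$ we have
\[
A^+(\bfl,r)\,0(\bfj,\la,r)\,A^-(\bfl,r)=\sum_{\mu\in\afLanr}v^{\mu\centerdot\bfj}\leb{\mu\atop\la}\rib A^+(\bfl,r)[\diag(\mu)]A^-(\bfl,r),
\]
and Proposition \ref{tri-affine Schur algebras} rewrites each factor on the right as $[A+\diag(\mu-\bfsg(A))]$ plus a $\sZ$-combination of $[A']$ with $A'\sqsubset A+\diag(\mu-\bfsg(A))$. The substitution $\nu=\mu-\bfsg(A)$, followed by the first identity of \eqref{SGPidentity} with $\al=\nu$ and $\beta=\bfsg(A)$, rewrites the sum of leading contributions as $\sum_{\dt\leq\la}v^{(\bfj-\dt)\centerdot\bfsg(A)}\leb{\bfsg(A)\atop\la-\dt}\rib A(\bfj+\la-\dt,\dt,r)$, after recognizing the inner $\nu$-sum by the definition of $A(-,-,r)$.

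For the lower contribution, each $A'\sqsubset A+\diag(\mu-\bfsg(A))$ decomposes uniquely as $A'=B+\diag(\mu')$ with $B=(A')^{\pm}\in\afThnpm$, and I claim $B\p A$ strictly: since $\sg_{i,j}$ with $i\neq j$ is independent of diagonal entries, the relation $A'\pr A+\diag(\mu-\bfsg(A))$ yields $\sg_{i,j}(B)\leq\sg_{i,j}(A)$; equality throughout would determine $B=A$ (the $\sg_{i,j}$'s recover the off-diagonal entries), hence $A'=A+\diag(\mu')$, and the equal row/column sums in $\sqsubseteq$ would then force $\mu'=\mu-\bfsg(A)$, contradicting $A'\neq A+\diag(\mu-\bfsg(A))$. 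By Lemma \ref{basis for afVn}, each $[A']=[B+\diag(\mu')]$ expands as a $\sZ$-linear combination of $B(\bfj',\dt,r)$ within the $B$-block of $\afSr$, so the total lower contribution is a $\sZ$-combination of $B(\bfj',\dt,r)$ with $B\p A$. Summing over $r$ gives the formula in $\afbfSn$. For the ``in particular'' statement, the $\dt=\la$ summand equals $v^{(\bfj-\la)\centerdot\bfsg(A)}A(\bfj,\la)$, so a double induction---primarily on $A$ along the finite $\p$-intervals (base $A=0$, for which $0(\bfj,\la)\in\afVnz$ by definition) and secondarily on $\la$---shows that every $A(\bfj,\la)\in\afVnp\afVnz\afVnm$, hence $\afVn=\afVnp\afVnz\afVnm$.

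For $(2)$, Proposition \ref{tri Hall} applied to $A^+$ (and to ${}^t\!A^-$, through the transpose anti-isomorphism \eqref{tau}) produces words $w_{A^+},w_{A^-}\in\ti\Sg$ with $\wp^+(w_{A^+})=A^+$ and $\wp^-(w_{A^-})=A^-$. Applying $\zeta$ and invoking Theorem \ref{realization} gives
\[
\ttn^+_{(w_{A^+})}=A^+(\bfl)+\sum_{B^+\prec A^+}f_{B^+,A^+}B^+(\bfl),\quad\ttn^-_{(w_{A^-})}=A^-(\bfl)+\sum_{B^-\prec A^-}f_{B^-,A^-}B^-(\bfl).
\]
Expanding the triple product, the main summand $A^+(\bfl)0(\bfj,\la)A^-(\bfl)$ is handled by $(1)$, while each cross summand $B^+(\bfl)0(\bfj,\la)B'^{-}(\bfl)$ with $(B^+,B'^{-})\neq(A^+,A^-)$ is, again by $(1)$, a $\sZ$-combination of $B(\bfj'',\dt)$ whose leading matrix $B=B^++B'^{-}$ satisfies $B\p A$; the strict inequality follows from the block decomposition of $\pr$ along $\afThnp\oplus\afThnm$ (as $\sg_{i,j}$ of an element of $\afThnp$ vanishes for $i>j$ and of an element of $\afThnm$ for $i<j$) combined with the strict $B^+\prec A^+$ or $B'^{-}\prec A^-$ holding in at least one factor. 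The main obstacle is the strict inequality $B\p A$ in $(1)$: $\sqsubset$ alone gives only non-strict $\sg_{i,j}(A')\leq\sg_{i,j}(A)$, so one must exploit the row/column sum conservation built into $\sqsubseteq$ to rule out the degenerate possibility that $A'$ differs from the leading matrix only on the diagonal.
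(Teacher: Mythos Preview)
Your computation of the leading sum via Proposition~\ref{tri-affine Schur algebras} and the first identity in \eqref{SGPidentity} is the same as the paper's, and your argument that the off-diagonal part $B=(A')^\pm$ of any $A'\sqsubset A+\diag(\mu-\bfsg(A))$ satisfies $B\p A$ strictly is correct and in fact more carefully stated than in the paper.

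There is, however, a genuine gap in your treatment of the lower contribution. Working level by level, you obtain for each $r$ that $f_r$ is a $\sZ$-linear combination of elements $[B+\diag(\mu')]$ with $B\p A$; even granting that each such element lies in the $\sZ$-span of the $B(\bfj',\dt,r)$ (your appeal to Lemma~\ref{basis for afVn} is misplaced here, as that lemma concerns $\afVn$, not $\afSr$), this only shows that $f_r$ is a $\sZ$-combination of $B(\bfj',\dt,r)$ \emph{with coefficients that may depend on $r$}. The passage ``Summing over $r$ gives the formula in $\afbfSn$'' does not follow: an arbitrary sequence $(f_r)_{r\geq0}$ with $f_r$ in the $\sZ$-span of the $B(\bfj',\dt,r)$ need not lie in $\afVn$, which consists of \emph{finite} $\sZ$-combinations of the $B(\bfj',\dt)=(B(\bfj',\dt,r))_{r\geq0}$. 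The structure constants appearing in Proposition~\ref{tri-affine Schur algebras} genuinely depend on $\mu$, so uniformity is not automatic.

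The paper closes this gap differently: it first shows that the left-hand side $A^+(\bfl)\,0(\bfj,\la)\,A^-(\bfl)$ already lies in $\afVn$. This is where Lemma~\ref{Vzp-Vzm} (expressing $A^\pm(\bfl)$ as products of semisimple generators $S_\al(\bfl)$, ${}^t\!S_\al(\bfl)$) together with Proposition~\ref{formula in Vz} (showing such generators preserve $\afVn$ under multiplication) is essential. Once the left-hand side is known to be in $\afVn$, the lower contribution $f$ lies in $\afVn$ by subtraction, and then your support argument (only $B\p A$ can occur) pins down its form. Your ``in particular'' conclusion $\afVn=\afVnp\afVnz\afVnm$ likewise needs the inclusion $\afVnp\afVnz\afVnm\subseteq\afVn$, which is exactly what this missing step provides. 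Your treatment of part~(2) is fine and matches the paper's.
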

\begin{proof}
According to Proposition~\ref{tri-affine Schur algebras}, for any $\mu\in\afLanr$, we have
\begin{equation*}
A^+(\bfl,r)[\diag(\mu)]A^-(\bfl,r)=[A+\diag(\mu-\bfsg(A))]+f_{\mu,r}
\end{equation*}
where $f_{\mu,r}$ is a $\sZ$-linear combination of $[B]$ such that $B\in\afThnr$ and  $B \sqsubset A+\diag(\mu-\bfsg(A))$. Thus,
\begin{equation*}
\begin{split}
A^+(\bfl,r)0(\bfj,\la,r) A^-(\bfl,r)&=\sum_{\mu\in\afLanr}
\up^{\bfj\centerdot\mu}\leb{\mu\atop\la}\rib
([A+\diag(\mu-\bfsg(A))]+f_{\mu,r})\\
&=\sum_{\nu\in\afLa(n-r-\sg(A))}\up^{\bfj\centerdot(\nu+\bfsg(A))}
\leb{\nu+\bfsg(A)\atop\la}\rib
[A+\diag(\nu)]+f_r,\\
\end{split}
\end{equation*}
where $f_r=\sum_{\mu\in\afLanr}\up^{\bfj\centerdot\mu}
\leb{\mu\atop\la}\rib f_{\mu,r}$. By \eqref{SGPidentity}, we have
\begin{equation*}
\begin{split}
A^+(\bfl,r)0(\bfj,\la,r) A^-(\bfl,r)
&=\sum_{\nu\in\afmbzn}\up^{\bfj\centerdot(\nu+\bfsg(A))}
\sum_{\dt\in\afmbnn\atop\dt\leq\la}
\up^{\nu\centerdot(\la-\dt)-\dt\centerdot\bfsg(A)}
\leb{\nu\atop\dt}\rib
\leb{\bfsg(A)\atop\la-\dt}\rib
[A+\diag(\nu)]+f_r\\
&=\sum_{\dt\in\afmbnn\atop\dt\leq\la}\up^{(\bfj-\dt)\centerdot\bfsg(A)}
\leb{\bfsg(A)\atop\la-\dt}\rib A(\bfj+\la-\dt,\dt)+f_r.
\end{split}
\end{equation*}
 On the other hand, by Lemma~\ref{Vzp-Vzm} and  Proposition~\ref{formula in Vz}, we see that $(f_r)_{r\geq 0}\in\afVn$. Hence, $(f_r)_{r\geq 0}$ must be a $\sZ$-linear combination of $B(\bfj',\dt)$ such that $B\in\afThnpm$, $B\p A$, $\dt\in\afmbnn$ and $\bfj'\in\afmbzn$. This proves (1). The assertion (2) follows from (1), Proposition~\ref{tri Hall} and Theorem~\ref{zr}.
\end{proof}

For $A\in\aftiThn$, let
$$\ddet{A}=\sum_{i<j\atop1\leq i\leq n}{j-i+1\choose 2}(a_{i,j}+a_{j,i}).$$
Then, $A\prec B$ implies $\ddet{A}<\ddet{B}$.
The following result is the affine version of \cite[Prop.~4.3]{Fu1} which is conjectured in \cite[4.9]{Fu1}.

\begin{Prop}\label{Vz-subalgebra}
The $\sZ$-module $\afVn$ is a subalgebra of $\afbfSn$ which is
generated by the elements $(\sum_{1\leq i\leq n}\al_i\afE_{i,i+1})(\bfl)$, $(\sum_{1\leq i\leq n}\al_i\afE_{i+1,i})(\bfl)$, $0({\afbse_i})$, $0(\bfl,t\afbse_i)$ for all $\al\in\afmbnn$, $t\in\mbn$, $1\leq i\leq n$.
\end{Prop}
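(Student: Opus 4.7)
The plan is to prove $\afVn = \sA_0$, where $\sA_0$ denotes the $\sZ$-subalgebra of $\afbfSn$ generated by the listed elements; this simultaneously establishes both the subalgebra property and the generation statement.

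First, for the inclusion $\sA_0 \subseteq \afVn$: each generator lies in $\afVn$ since $\leb{\mu\atop\bfl}\rib=1$ gives $A(\bfj)=A(\bfj,\bfl)$, so $\Sal(\bfl)$, ${}^t\!\Sal(\bfl)$ and $0(\afbse_i)$ all belong to the spanning set, as does $0(\bfl,t\afbse_i)$. Proposition~\ref{formula in Vz} provides explicit formulas showing that left multiplication of $A(\bfj,\la)$ by any of these four types of generators produces an element of $\afVn$, with coefficients being Gaussian binomials in $\sZ$. Iterating gives $\sA_0\cdot\afVn\subseteq\afVn$, and since $1=0(\bfl,\bfl)\in\afVn$, we conclude $\sA_0\subseteq\afVn$.

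For the reverse inclusion, I would first show $\afVnz\subseteq\sA_0$. Specialising Proposition~\ref{formula in Vz}(1) yields the identities $0(\bfj)0(\bfj')=0(\bfj+\bfj')$ and $0(\bfj)0(\bfl,\la)=0(\bfj,\la)$, so that $0(\bfj,\la)=\prod_i 0(\afbse_i)^{j_i}\cdot\prod_i 0(\bfl,\la_i\afbse_i)$ for $\bfj,\la\in\afmbnn$. To reach $\bfj$ with negative entries, a coefficient comparison in each $\afSr$ gives
$$0(-\afbse_i)=0(\afbse_i)-(\up-\up^{-1})\,0(\bfl,\afbse_i),$$
placing $0(\bfj,\la)\in\sA_0$ for all $\bfj\in\afmbzn$ and $\la\in\afmbnn$.

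For general $A\in\afThnpm$, I would proceed by double induction: outer on $A$ under the partial order $\prec$ of \eqref{order pref} (well-founded via the strictly monotone $\ddet{\cdot}$), and inner on $\sg(\la)=\sum_i\la_i$. By Proposition~\ref{tri Hall} together with the identification $\zeta(\ti u^+_\al)=\Sal(\bfl)$, the monomials $\ttn^+_{(w_{A^+})}$ and $\ttn^-_{(w_{A^-})}$ are products of elements $\Sal(\bfl)$ and ${}^t\!\Sal(\bfl)$ respectively, hence lie in $\sA_0$. Combined with $0(\bfj,\la)\in\sA_0$, Lemma~\ref{tri-Vz}(2) delivers the triangular identity
$$\ttn^+_{(w_{A^+})}\,0(\bfj,\la)\,\ttn^-_{(w_{A^-})}=\up^{(\bfj-\la)\centerdot\bfsg(A)}\,A(\bfj,\la)+(\text{lower terms}),$$
where the lower terms are $\sZ$-combinations of $A(\bfj'',\dt)$ with $\sg(\dt)<\sg(\la)$ and of $B(\bfj',\dt')$ with $B\prec A$. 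The induction hypothesis places the lower terms in $\sA_0$, hence $A(\bfj,\la)\in\sA_0$.

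The main obstacle I anticipate is not a single step but the combined bookkeeping: verifying that every term on the right-hand side of the triangular identity is either strictly $\prec$-below $A$ or has the same $A$ with strictly smaller $\sg(\la)$, so that the joint induction actually closes. The only other non-routine computation is the explicit identity for $0(-\afbse_i)$, which reflects the $\mbz$-graded (rather than $\mbn$-graded) nature of the $\bfj$-parameter that is not directly visible in the listed positive generators.
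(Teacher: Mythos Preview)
Your proof is correct and follows essentially the same strategy as the paper's: both establish $\sA_0=\afVn$ by proving $\sA_0\subseteq\afVn$ via Proposition~\ref{formula in Vz} and then $\afVn\subseteq\sA_0$ by induction on $\ddet{A}$ using the triangular relation of Lemma~\ref{tri-Vz}(2). The only organisational difference is that the paper first isolates $A(\bfl)$ (taking $\bfj=\la=\bfl$ in the triangular identity), then bootstraps to $A(\bfj,\la)$ via the separate formula $0(\bfj,\la)A(\bfl)=\up^{\ro(A)\centerdot(\bfj+\la)}A(\bfj,\la)+(\text{terms with smaller }\sg(\la))$, whereas you run the two inductions jointly by keeping the full $0(\bfj,\la)$ in the middle; your explicit verification that $0(-\afbse_i)=0(\afbse_i)-(\up-\up^{-1})0(\bfl,\afbse_i)$ fills in a point the paper leaves implicit when writing $\prod_i 0(\afbse_i)^{j_i}$ for possibly negative $j_i$.
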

\begin{proof}
Let $\afVn'$ be the $\sZ$-subalgebra of $\afbfSn$ generated by the indicated elements. According to Proposition~\ref{formula in Vz}, we have $\afVn'\han\afVn'\afVn\han\afVn$.
We shall show by induction on $\ddet A$ that $A(\bfj,\la)\in\afVn'$ for all $A\in\afThnpm$, $\bfj\in\afmbzn$ and $\la\in\afmbnn$. If $\ddet A=0$, then $A=0$ and $0(\bfj,\la)=\prod_{1\leq i\leq n}0(\afbse_i)^{j_i}0(\bfl,\la_i\afbse_i)\in\afVn'.$
Now we assume that $\ddet A>0$ and $A'(\bfj,\la)\in\afVn'$
for all $A',\bfj,\la$ with $\ddet {A'}<\ddet A$.
By Lemma~\ref{tri-Vz}(2) and \cite[3.7.6]{DDF}, there exist $w_{A^+},w_{A^-}\in\ti\Sg$ such that
$$\ttn^+_{(w_{A^+})}\ttn^-_{(w_{A^-})}=A\br\bfl+g$$
where $g$ is a $\sZ$-linear combination of $B(\bfj',\dt)$ with $B\in\afThnpm$, $|\!|B|\!|<|\!|A|\!|$, $\dt\in\afmbnn$ and $\bfj'\in\afmbzn$.
By the induction hypothesis we have $g\in\afVn'$. It follows that $A(\bfl)\in\afVn'$ and so $A(\bfj)\in\afVn'$ by
Theorem~\ref{mul formulas in quantum affine gln}(1). Furthermore,  by Proposition~\ref{formula in Vz}(1) (setting $\bfj'=\mu-\nu$ there),
\begin{equation}\label{0{la}A{0}}
\begin{split}
0(\bfj,\la) A(\bfl)&=
\up^{\ro(A)\centerdot(\bfj+\la)}A(\bfj,\la)+
\sum_{\bfj'\in\afmbnn\atop\bfj'<\la}\up^{\ro(A)\centerdot(\bfj+\bfj')}
\leb{\ro(A)\atop\la-\bfj'}\rib A(\bfj+\bfj'-\la,\bfj')\\
&=\up^{\ro(A)\centerdot(\bfj+\la)}A(\bfj,\la)+
\sum_{\bfj'\in\afmbnn\atop\sg(\bfj')<\sg(\la)}\up^{\ro(A)\centerdot(\bfj+\bfj')}
\leb{\ro(A)\atop\la-\bfj'}\rib A(\bfj+\bfj'-\la,\bfj').
\end{split}
\end{equation}
Thus, by induction on $\sg(\la)$, we conclude that $A(\bfj,\la)\in\afVn'$ for all $\bfj\in\afmbzn$ and $\la\in\afmbnn$. \end{proof}

As indicated in \cite[Rem. 4.10(3)]{Fu1}, we  now  use Proposition~\ref{Vz-subalgebra} to prove the conjecture formulated in \cite[3.8.6]{DDF}.
Recall from Theorem~\ref{realization} that the homomorphism $\zeta$ in \eqref{zeta} induces an isomorphism $\zeta:\dbfHa\to\afbfVn$.

\begin{Thm}\label{realization of dHa} We have $\zeta^{-1}(\afVn)=\dHa$. In particular,
$\dHa$ is a subalgebra of $\dbfHa$ isomorphic to $\afVn$. Moreover, $\dHa$ is a Hopf subalgebra of $\dbfHa$.
\end{Thm}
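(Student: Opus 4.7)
My plan is to establish the key identity $\zeta(\dHa)=\afVn$, from which the theorem follows at once: $\zeta^{-1}(\afVn)=\dHa$ is immediate from the injectivity of $\zeta:\dbfHa\to\afbfVn$ (Theorem~\ref{realization}), and since $\afVn$ is a $\sZ$-subalgebra of $\afbfSn$ by Proposition~\ref{Vz-subalgebra}, the $\sZ$-module $\dHa$ is automatically a $\sZ$-subalgebra of $\dbfHa$ isomorphic to $\afVn$ via $\zeta$.

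The inclusion $\zeta(\dHa)\subseteq\afVn$ will be checked on generators. Theorem~\ref{realization} gives $\zeta(\ti u_A^+)=A(\bfl)\in\afVnp$ and $\zeta(\ti u_A^-)=({}^t\!A)(\bfl)\in\afVnm$. For the $0$-part, a direct computation in $\afSr$ (using that $K_i^{\pm1}$ acts on the weight space $[\diag(\mu)]$ by $v^{\pm\mu_i}$ and $\leb{K_i;0\atop t}\rib$ acts by $\leb{\mu_i\atop t}\rib$) yields $\zeta(K_i^{\pm1})=0(\pm\afbse_i)$ and $\zeta(\leb{K_i;0\atop t}\rib)=0(\bfl,t\afbse_i)$, all lying in $\afVnz$. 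Combined with the integral triangular decomposition $\afVn=\afVnp\cdot\afVnz\cdot\afVnm$ from Lemma~\ref{tri-Vz}(1), this gives $\zeta(\dHa)\subseteq\afVn$.

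The reverse inclusion is the heart of the argument and where the main obstacle lies. Using the basis of $\afVn$ from Lemma~\ref{basis for afVn}, I would prove by induction on $\ddet{A}$ that every $A(\bfj,\la)$ belongs to $\zeta(\dHa)$. The base case $A=0$ reduces to $\afVnz\subseteq\zeta(\dHaz)$, which follows from the identification of the $0$-part of $\dbfHa$ with the standard type-$A$ Lusztig $0$-part (matching generators with the basis in Lemma~\ref{basis for afVn}). For the inductive step, Lemma~\ref{tri-Vz}(2) produces words $w_{A^\pm}\in\ti\Sg$ satisfying
$$\ttn^+_{(w_{A^+})}\,0(\bfj,\la)\,\ttn^-_{(w_{A^-})}=\sum_{\dt\leq\la}v^{(\bfj-\dt)\centerdot\bfsg(A)}\leb{\bfsg(A)\atop\la-\dt}\rib A(\bfj+\la-\dt,\dt)+g,$$
where $g$ is a $\sZ$-linear combination of $B(\bfj',\dt)$ with $B\prec A$, hence $\ddet{B}<\ddet{A}$. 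The left-hand side equals $\zeta$ of the product $\ti u^+_{(w_{A^+})}\cdot K(\bfj,\la)\cdot\ti u^-_{(w_{A^-})}\in\dHap\cdot\dHaz\cdot\dHam=\dHa$ (writing $K(\bfj,\la)\in\dHaz$ for the preimage of $0(\bfj,\la)$), and $g\in\zeta(\dHa)$ by the outer induction. A nested upward induction on $\sg(\la)$ then isolates $A(\bfj,\la)$: the $\dt=\la$ contribution gives $A(\bfj,\la)$ with unit leading coefficient $v^{(\bfj-\la)\centerdot\bfsg(A)}$, while every strictly smaller $\dt<\la$ forces $\sg(\dt)<\sg(\la)$, so the term $A(\bfj+\la-\dt,\dt)$ is already in $\zeta(\dHa)$ by the inner induction.

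For the final Hopf subalgebra assertion it suffices to evaluate the comultiplication, counit and antipode of $\dbfHa$ on the generators $u_\la^\pm$, $K_i^{\pm1}$ and $\leb{K_i;0\atop t}\rib$ of $\dHa$; the relevant Hall-coproduct coefficients are integral by Green's theorem, and the $0$-part is a classical Lusztig Hopf subalgebra, so $\Delta(\dHa)\subseteq\dHa\otimes\dHa$ and $S(\dHa)\subseteq\dHa$ follow. The principal obstacle is the nested induction described above, especially arranging the two levels of induction so that the upper-triangular system with leading coefficient $v^{(\bfj-\la)\centerdot\bfsg(A)}$ can be inverted integrally and so that the error term $g$ is strictly controlled by the partial order $\prec$ refined through $\ddet{\cdot}$.
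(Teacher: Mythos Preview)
Your argument is correct, but the reverse inclusion is handled more laboriously than necessary. In the paper, the proof of $\zeta(\dHa)=\afVn$ is a one-liner: since $\zeta$ is an algebra homomorphism and $\dHa=\dHap\dHaz\dHam$ by definition,
\[
\zeta(\dHa)=\zeta(\dHap)\,\zeta(\dHaz)\,\zeta(\dHam)=\afVnp\cdot\afVnz\cdot\afVnm=\afVn,
\]
the last equality being exactly the statement of Lemma~\ref{tri-Vz}(1). The three equalities $\zeta(\dHap)=\afVnp$, $\zeta(\dHam)=\afVnm$ are immediate since $\zeta$ carries the defining bases $\{\ti u_A^\pm\}$ onto the spanning sets $\{A(\bfl)\}$, and $\zeta(\dHaz)=\afVnz$ is the standard Lusztig $0$-part computation you already invoke. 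No induction is needed.

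Your nested induction on $\ddet{A}$ and then on $\sg(\la)$ is valid, but it effectively re-derives inside $\zeta(\dHa)$ what Lemma~\ref{tri-Vz}(1) already packages as the equality $\afVn=\afVnp\afVnz\afVnm$. Since you have already cited that lemma for the forward inclusion, you may as well use it symmetrically to finish. For the Hopf subalgebra claim, the paper likewise appeals to an existing reference (\cite[3.5.7]{DDF}) rather than recomputing $\Delta$ and $S$ on semisimple generators, though your description of that computation is accurate.
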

\begin{proof}Since
$\zeta(\dHa)=\zeta(\dHap)\zeta(\dHaz)\zeta(\dHam)=\afVnp\afVnz\afVnm,$
it follows from Lemma~\ref{tri-Vz}(1) that $\zeta(\dHa)=\afVn$.
Hence, by Proposition~\ref{Vz-subalgebra} and Theorem~\ref{realization}, $\dHa$ is a subalgebra. By using the semisimple generators for $\dHa$, the last assertion follows from \cite[3.5.7]{DDF}.
\end{proof}

\begin{Rem}
(1) A different integral form $U_\up^\res(\afgl)$ of $\bfU(\afgl)$ was constructed
in \cite[7.2]{FM}. As pointed
out in \cite{FM}, it is not known if $U_\up^\res(\afgl)$ is a Hopf
subalgebra. It would be interstring to find a relation
between $\dHa$ and $U_\up^\res(\afgl)$.

(2) There is another form using the Lusztig form of $\bfU(\afsl)$ tensoring with an integral central algebra;
see \cite[2.4.4]{DDF}. However, this form does not map onto the integral affine quantum Schur algebras; see Example 5.3.8 in \cite{DDF}.
\end{Rem}

We end this section with an application to the affine quantum Schur--Weyl reciprocity at the integerl level. The proof of the following result is the same as that of \cite[Th.~3.8.1(1)]{DDF}.

\begin{Thm}\label{affine Schur-Weyl duality}
The restriction of $\zr$ to $\dHa$ gives a surjective $\sZ$-algebra homomorphism
$$\zr:\dHa\twoheadrightarrow\afSr.$$
\end{Thm}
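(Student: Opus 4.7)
The plan is to combine the subalgebra property $\dHa\subseteq\dbfHa$ established in Theorem~\ref{realization of dHa} with the triangular basis of $\afSr$ given in Proposition~\ref{tri-affine Schur algebras}. Since $\zr\colon\dbfHa\to\afbfSr$ is a $\mbq(\up)$-algebra homomorphism by Theorem~\ref{zr} and $\dHa$ is a $\sZ$-subalgebra of $\dbfHa$, the restriction $\zr|_{\dHa}$ is automatically a $\sZ$-algebra map; checking that it actually lands in $\afSr$ is immediate on generators, since $\zr(\ti u_A^\pm)=A^\pm(\bfl,r)\in\afSr$, $\zr(K^{\bfj})=0(\bfj,r)\in\afSr$, and
\[
\zr\left(\leb{K_i;0\atop t}\rib\right)=\sum_{\mu\in\afLanr}\leb{\mu_i\atop t}\rib\,[\diag(\mu)]\in\afSr.
\]
So only surjectivity requires work.

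For surjectivity I would invoke the triangular $\sZ$-basis
\[
\{A^+(\bfl,r)[\diag(\la)]A^-(\bfl,r)\mid A\in\afThnpm,\;\la\in\afLanr,\;\la\ge\bfsg(A)\}
\]
of $\afSr$ from Proposition~\ref{tri-affine Schur algebras}. The outer factors $A^+(\bfl,r)$ and $A^-(\bfl,r)$ already lie in $\zr(\dHap)$ and $\zr(\dHam)$ by Theorem~\ref{zr} (via $\ti u_{A^+}^+\mapsto A^+(\bfl,r)$ and $\ti u_{{}^t\!A^-}^-\mapsto A^-(\bfl,r)$ respectively), so the entire task reduces to showing that every idempotent $[\diag(\la)]$ for $\la\in\afLanr$ is the $\zr$-image of some element of $\dHaz$.

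This is the heart of the argument, and it is elementary. Computing in the same way as above,
\[
\zr\left(\prod_{i=1}^n \leb{K_i;0\atop\la_i}\rib\right)=\sum_{\mu\in\afLanr}\prod_{i=1}^n\leb{\mu_i\atop\la_i}\rib\,[\diag(\mu)].
\]
For $\mu\in\afLanr$, the Gaussian factor $\leb{\mu_i\atop\la_i}\rib$ vanishes whenever $\mu_i<\la_i$ (since $\mu_i\in\mbn$), so only $\mu$ with $\mu_i\ge\la_i$ for every $i$ contribute. Combined with the row-sum constraint $\sum_i\mu_i=\sum_i\la_i=r$, this forces $\mu=\la$, and the sum collapses to the single term $[\diag(\la)]$. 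Hence $[\diag(\la)]\in\zr(\dHaz)$ for every $\la\in\afLanr$, each triangular basis element $A^+(\bfl,r)[\diag(\la)]A^-(\bfl,r)$ lies in $\zr(\dHa)$, and surjectivity follows. No serious obstacle is anticipated; the only nontrivial ingredient beyond the already-established subalgebra property of $\dHa$ is the triangular vanishing $\leb{\mu_i\atop\la_i}\rib=0$ for $\mu_i<\la_i$ in $\mbn$, together with conservation of the total weight~$r$ within $\afLanr$.
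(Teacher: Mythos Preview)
Your proof is correct and follows essentially the approach the paper indicates (the paper simply refers to \cite[Th.~3.8.1(1)]{DDF}, whose argument is precisely the triangular-basis one you give). The key steps---$\zr(\dHa)\subseteq\afSr$ via the module decomposition $\dHa=\dHap\dHaz\dHam$, and surjectivity via Proposition~\ref{tri-affine Schur algebras} together with the identity $\zr\bigl(\prod_i\leb{K_i;0\atop\la_i}\rib\bigr)=[\diag(\la)]$ forced by the weight constraint $\sg(\mu)=\sg(\la)$---match the intended argument.
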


Let $\mpk$ be a commutative ring containing an invertible element $\vep$. We will regard $\mpk$ as a  $\sZ$-module by specializing
$\up$ to $\vep$.
Let  $\dHak=\dHa\ot_\sZ \mpk$,
$\afSrk=\afSr\ot_\sZ \mpk$. Then we have $\afSrk\cong\End_{\afHrk}(\afTnrk),$
where $\afTnrk=\oplus_{\la\in\afLanr}(x_\la\sH_\vtg(r)_\mpk)$ with $\afHrk=\afHr\ot_\sZ\mpk$.
\begin{Coro}
For any commutative ring $\mpk$, there is an algebra epimorphism
$$\zr\ot 1:\dHak\twoheadrightarrow\afSrk.$$
\end{Coro}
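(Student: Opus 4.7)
The proof is essentially immediate from Theorem~\ref{affine Schur-Weyl duality} by base change. The plan is to apply the right-exact functor $-\otimes_\sZ\mpk$ to the surjective $\sZ$-algebra homomorphism $\zr:\dHa\twoheadrightarrow\afSr$ established in that theorem, and then verify that the resulting map agrees with $\zr\otimes 1$ in the stated sense.

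More concretely, I would proceed as follows. First, recall that $\dHak=\dHa\otimes_\sZ\mpk$ and $\afSrk=\afSr\otimes_\sZ\mpk$ by definition. Applying $-\otimes_\sZ\mpk$ to the short exact sequence
\begin{equation*}
0\longrightarrow \ker(\zr)\longrightarrow \dHa\stackrel{\zr}{\longrightarrow}\afSr\longrightarrow 0
\end{equation*}
yields, by right-exactness of tensor product, a surjection $\dHak\twoheadrightarrow\afSrk$. Since $\zr$ is a $\sZ$-algebra map, this induced surjection is a $\mpk$-algebra homomorphism. It coincides with $\zr\otimes 1$ by construction, so the claim follows.

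The only (very minor) thing to confirm is that the target $\afSrk=\afSr\otimes_\sZ\mpk$ appearing in the corollary is really the same object produced by the base-change construction; this is immediate from its definition as $\afSr\otimes_\sZ\mpk$ in the paragraph preceding the corollary, and is further compatible with the endomorphism description $\afSrk\cong\End_{\afHrk}(\afTnrk)$. There is no genuine obstacle here: the entire content of the corollary is that the integral Schur--Weyl reciprocity of Theorem~\ref{affine Schur-Weyl duality} specialises to an arbitrary commutative ring, which is a formal consequence of right-exactness of base change applied to a surjection of $\sZ$-algebras.
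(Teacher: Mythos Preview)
Your proof is correct and matches the paper's approach: the corollary is stated without proof in the paper, precisely because it is an immediate consequence of Theorem~\ref{affine Schur-Weyl duality} by right-exactness of base change, exactly as you explain.
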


\section{The affine BLM algebra $\afKn$}
We first derive in Proposition~\ref{stabilization property} the affine stabilisation property for affine quantum Schur algebras, which is the affine analogue of \cite[4.2]{BLM}. We then construct the affine BLM algebra $\afbfKn$ and prove that it is isomorphic to the modified quantum group $\ddbfHa$.

Observe the structure constants in Proposition~\ref{[B][A]} and separate the Gaussian polynomial $\dblr{a_{i,i}+t_{i,i}-t_{i-1,i}\atop t_{i,i}}$ from the product. We now introduce, for
 a second indeterminate $\up'$, $T\in\afThn$ and $A\in\aftiThn$, the polynomials
$$P_{T,A}(\up,\up')=
\up^{\bt(T,A)}\prod_{1\leq i\leq n\atop j\in\mbz,\,j\not=i}\ol{\dleb{{a_{i,j}+t_{i,j}-t_{i-1,j}\atop t_{i,j}}}\drib}\prod_{1\leq i\leq n\atop 1\leq s\leq t_{i,i}}\frac{\up^{-2(a_{i,i}+t_{i,i}-t_{i-1,i}-s+1)}\up'^2-1}{\up^{-2s}-1}$$
and
$$Q_{T,A}(\up,\up')=\up^{\bt'(T,A)}\prod_{1\leq i\leq n\atop j\in\mbz,\,j\not=i}\ol{\dleb{{a_{i,j}-t_{i,j}+t_{i-1,j}\atop t_{i,j}}}\drib}\prod_{1\leq i\leq n\atop 1\leq s\leq t_{i-1,i}}\frac{\up^{-2(a_{i,i}-t_{i,i}+t_{i-1,i}-s+1)}\up'^2-1}{\up^{-2s}-1}$$
in the subring $\sZ_1$ of $\mbq(\up)[\up',\up^{\prime-1}]$, where
\begin{equation}\label{sZ_1}
\text{$\sZ_1$ is generated (over $\mbz$!) by
$\prod_{1\leq i\leq t}\frac{\up^{-2(a-i)}\up'^2-1} {\up^{-2i}-1}$, $\prod_{1\leq i\leq t}\frac{\up^{2(a-i)}\up'^{-2}-1} {\up^{2i}-1}$,
and $\up^j$}
\end{equation}
for all $a\in\mbz$, $t\geq 1$ and $j\in\mbz$. Note that $\sZ_1|_{v'=1}=\sZ$.

For $A\in\aftiThn$ and $p\in\mbz$, let
$${}_pA=A+pI$$
where $I\in\afThn$ is the identity matrix. Then it is clear that
$\beta(T,A)=\beta(T,{}_pA)$ and $\beta'(T,A)=\beta'(T,{}_pA)$.
Thus, Proposition~\ref{[B][A]} can be generalised as follows.

\begin{Lem}\label{stabilization property1}
Let $A,B\in\aftiThn$ and assume $\co(B)=\ro(A)$ and $b=\sg(A)=\sg(B)$.

$(1)$ If $B-\sum_{1\leq i\leq n}\al_i\afE_{i,i+1}$ is diagonal for some $\al\in\afmbnn$ then, for large $p$ and $r=pn+b$, we have in $\afSr$:
$$[{}_pB][{}_pA]=\sum_{T\in\afThn,\,\ro(T)=\al\atop a_{i,j}+t_{i,j}-t_{i-1,j}\geq 0,\,\forall i\not=j}P_{T,A}(\up,\up^{-p})[{}_p(A+T-\ti T)].$$

$(2)$ If $B-\sum_{1\leq i\leq n}\al_i\afE_{i+1,i}$ is diagonal for some $\al\in\afmbnn$ then, for large $p$ and $r=pn+b$, we have in $\afSr$:
$$[{}_pB][{}_pA]=\sum_{T\in\afThn,\,\ro(T)=\al\atop a_{i,j}-t_{i,j}+t_{i-1,j}\geq 0,\,\forall i\not=j}Q_{T,A}(\up,\up^{-p})[{}_p(A-T+\ti T)].$$
\end{Lem}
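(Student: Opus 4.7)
The strategy is to reduce the assertion to Proposition~\ref{[B][A]} by shifting both matrices by $pI$ for $p$ sufficiently large. For such $p$ the diagonal entries of ${}_pA$ and ${}_pB$ are nonnegative, so ${}_pA, {}_pB \in \afThnr$ with $r = pn + b$, and Proposition~\ref{[B][A]} applies directly. The proof then consists of reading off the structure constants and repackaging the $p$-dependence into the specialization $v' = v^{-p}$.

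The key preliminary observation is the shift invariance $\bt(T, {}_pA) = \bt(T, A)$ and $\bt'(T, {}_pA) = \bt'(T, A)$ already stated just above. This comes from a direct cancellation: the $pI$-contribution to $\bt(T, A)$ picks up only the diagonal terms ($j = i$ in the first sum and $j = i+1$ in the second), and each contributes $p \sum_{i, l \le i} t_{i,l}$, which cancel. The computation for $\bt'$ is identical.

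Applying Proposition~\ref{[B][A]}(1) to ${}_pB$ and ${}_pA$, and using the identity ${}_pA + T - \ti T = {}_p(A + T - \ti T)$ (since $pI$ commutes additively with $T - \ti T$), we obtain
$$[{}_pB][{}_pA] = \sum_{T} v^{\bt(T, A)} \prod_{\substack{1 \le i \le n \\ j \in \mbz}} \ol{\dleb{a_{i,j} + p\,\delta_{i,j} + t_{i,j} - t_{i-1,j} \atop t_{i,j}}\drib} [{}_p(A + T - \ti T)],$$
where $\delta_{i,j}$ denotes the Kronecker delta. The off-diagonal factors ($j \ne i$) are independent of $p$ and already match those appearing in $P_{T,A}(v, v')$. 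For the diagonal factors, the definition of the Gaussian binomial gives
$$\ol{\dleb{a_{i,i} + p + t_{i,i} - t_{i-1,i} \atop t_{i,i}}\drib} = \prod_{s=1}^{t_{i,i}} \frac{v^{-2(a_{i,i} + t_{i,i} - t_{i-1,i} - s + 1)} v^{-2p} - 1}{v^{-2s} - 1},$$
which is precisely the diagonal factor in the definition of $P_{T,A}(v, v')$ evaluated at $v' = v^{-p}$. Combining with $v^{\bt(T,A)}$ yields the coefficient $P_{T,A}(v, v^{-p})$.

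For the summation range: Proposition~\ref{[B][A]}(1) requires $a_{i,j} + p\,\delta_{i,j} + t_{i,j} - t_{i-1,j} \ge 0$ for all $(i,j)$. Since $\ro(T) = \al$ constrains $T$ to a finite set, for $p$ large enough the diagonal inequalities are automatic, and the sum reduces to the stated range over $T \in \afThn$ with $\ro(T) = \al$ and off-diagonal nonnegativity only. Part (2) follows by the same argument applied to Proposition~\ref{[B][A]}(2), using the shift-invariance of $\bt'$. The only substantive step is the cancellation yielding shift-invariance of $\bt$ and $\bt'$; the remainder is a direct translation of Proposition~\ref{[B][A]} through the substitution $v' = v^{-p}$, which is where the ring $\sZ_1$ in \eqref{sZ_1} was designed to accommodate the resulting $p$-dependence.
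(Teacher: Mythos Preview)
Your proof is correct and follows exactly the approach indicated in the paper: the observation $\bt(T,{}_pA)=\bt(T,A)$, $\bt'(T,{}_pA)=\bt'(T,A)$ reduces the lemma directly to Proposition~\ref{[B][A]}, with the diagonal Gaussian factors absorbing the $p$-dependence via the substitution $\up'=\up^{-p}$. You have simply written out in full the details that the paper leaves implicit in the remark immediately preceding the statement.
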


Let
$\aftiThn^{ss}$ be the set of $X\in\aftiThn$ such that either $X-\sum_{1\leq i\leq n}\al_i\afE_{i,i+1}$ or $X-\sum_{1\leq i\leq n}\al_i\afE_{i+1,i}$ is diagonal for some $\al\in\afmbnn$.
We have the following affine version of \cite[3.9]{BLM} (see \cite[4.5]{Fu} for a slightly different version). For completeness, we include a proof.
\begin{Prop}\label{generalization of BLM 3.9}
Let $A\in\afTh(n,r)$. Then there exist upper triangular matrices  $A_1,A_2,\cdots,A_s$ and lower triangular matrices $A_{s+1},A_{s+2},\cdots,A_{t}$ in $\aftiThn^{ss}\cap \afTh(n,r)$ such that $\co(A_i)=\ro(A_{i+1})$ ($1\leq i\leq t-1$) and the following identity holds in $\afSpr$: for $p\geq0$,
\begin{equation*}
[{}_p(A_1)]
\cdots
[{}_p(A_s)]\cdot[{}_p(A_{s+1})]
\cdots
[{}_p(A_{t})]=[{}_pA]+\text{lower terms relative to }\sqsubset.
\end{equation*}
\end{Prop}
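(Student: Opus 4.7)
\smallskip

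\noindent\textbf{Proof proposal.}
The plan is to combine three ingredients already developed in the paper: the triangular relation for affine quantum Schur algebras (Proposition~\ref{tri-affine Schur algebras}), the distinguished-word monomial basis for the Ringel--Hall algebra (Proposition~\ref{tri Hall}), and the stabilised multiplication formulas of Lemma~\ref{stabilization property1}. The overall strategy parallels the finite-type construction in \cite[Prop.~4.5]{Fu}, adapted for stabilisation.

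First I would apply Proposition~\ref{tri-affine Schur algebras} at level $r'=pn+r$ to obtain
$$[{}_pA] \;=\; A^+(\bfl,r')\,[\diag(\la_p)]\,A^-(\bfl,r') \;+\; (\sqsubset\text{-lower terms}),$$
where $A^\pm \in \afThnpm$ denotes the off-diagonal part of $A$ and $\la_p$ is the appropriate diagonal weight (namely, the diagonal vector of ${}_pA$ shifted by $\bfsg(A^\pm)$). The central idempotent $[\diag(\la_p)]$ will be absorbed into the matching of column sums between the last upper-triangular factor and the first lower-triangular factor introduced below. Using the distinguished words $w_{A^+},w_{A^-}$ (the latter produced via the transpose anti-isomorphism $\tau$ in~\eqref{tau}), Proposition~\ref{tri Hall} together with Theorem~\ref{zr} give
$$A^\pm(\bfl,r') \;=\; \zeta_{r'}(\ti u_{(w_{A^\pm})}^\pm) \;+\; (\prec\text{-lower terms in }A^\pm),$$
and by construction $\zeta_{r'}(\ti u_{(w_{A^+})}^+)$ factors as a product $S_{\al^{(1)}}(\bfl,r')\cdots S_{\al^{(s)}}(\bfl,r')$ of images of semisimple generators, with an analogous factorisation of $\zeta_{r'}(\ti u_{(w_{A^-})}^-)$ in terms of ${}^t\!S_{\al^{(k)}}(\bfl,r')$.

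To extract the basis representatives $A_i$ required by the statement, I would use that each $S_\al(\bfl,r') = \sum_\mu [S_\al+\diag(\mu)]$ is a sum of basis elements indexed by matrices in $\aftiThn^{ss}$. Fixing $\ro(A_1)=\ro(A)$, the chain of matching conditions $\co(A_k)=\ro(A_{k+1})$ together with the row-sum weights $\al^{(k)}$ prescribed by the word uniquely determines matrices $A_1,\ldots,A_s\in\aftiThn^{ss}\cap\afTh(n,r)$ (upper triangular) and $A_{s+1},\ldots,A_t$ (lower triangular). One then proves by induction on $t$, using iterated applications of Lemma~\ref{stabilization property1}, that
$$[{}_pA_1]\cdots[{}_pA_t] \;=\; [{}_pA] \;+\; (\sqsubset\text{-lower terms})$$
for all $p\ge 0$ simultaneously.

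The main obstacle is precisely this inductive step: at each application of Lemma~\ref{stabilization property1}, a distinguished choice $T=T_0$ contributes the running leading basis element, and I need to certify that every alternative $T$ produces a shift $T-\widetilde T$ whose associated matrix is strictly $\prec$-below the leading one. Because the multiplication formulas preserve row and column sums, $\prec$-strictness translates into $\sqsubset$-strictness, so the required inequality reduces to a combinatorial comparison of the $\sigma_{i,j}$-values induced by $T-\widetilde T$ versus $T_0-\widetilde{T_0}$, giving an affine analogue of the argument in \cite[Prop.~4.5]{Fu}. The $\prec$-lower Hall-algebra corrections from the second step feed into an outer induction on the $\sqsubset$-order (whose lower intervals are finite), and the polynomial dependence of all structure constants on $\up^{-p}$ through $P_{T,A},Q_{T,A}$ ensures that the leading-term identity persists uniformly for all $p\ge 0$.
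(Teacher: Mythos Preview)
Your proposal assembles the same ingredients as the paper (Proposition~\ref{tri Hall}, Theorem~\ref{zr}, Proposition~\ref{tri-affine Schur algebras}) and arrives at the same factorisation, so the core is correct. However, you overcomplicate the argument with an unnecessary step. Once you know that
\[
\zeta_{r'}(\ti u^+_{(w_{A^+})})\,[\diag(\bfsg({}_pA))]\,\zeta_{r'}(\ti u^-_{(w_{A^-})})=[{}_pA]+\text{($\sqsubset$-lower terms)}
\]
and that the chain of matching conditions picks out a \emph{unique} summand from each factor $S_{\al^{(k)}}(\bfl,r')$, the left side \emph{is already literally} the product $[{}_pA_1]\cdots[{}_pA_t]$. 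There is nothing further to prove: your proposed induction on $t$ via Lemma~\ref{stabilization property1}, with its combinatorial comparison of the $\sigma_{i,j}$-values for alternative $T$'s, is re-deriving from scratch a triangular relation that Proposition~\ref{tri-affine Schur algebras} already hands you. The paper accordingly stops after the factorisation step.

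The one detail you flag that does need attention is whether the Hall-algebra correction terms $B\prec A^+$ (and the analogous ones on the minus side) produce $\sqsubset$-lower contributions after sandwiching with the idempotent. This follows because $\bfd(B)=\bfd(A^+)$ forces $\ro(B)-\co(B)=\ro(A^+)-\co(A^+)$, so every cross term $B(\bfl,r')[\diag(\la_p)]C(\bfl,r')$ has the same row and column sums as ${}_pA$; then $B\prec A^+$ or $C\prec A^-$ gives the strict $\sigma_{i,j}$-inequality, hence $\sqsubset$. This is a one-line observation, not an outer induction.
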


\begin{proof} By Proposition~\ref{tri Hall}, there is a distinguished words $w_B$ for every $B\in\afThnp$ satisfying the triangular relation \eqref{eq tri Hall}. Let $x=w_{A^+}$ and $y={}^tw_{{}^t\!A^-}$. By Theorem~\ref{zr} and Proposition~\ref{tri Hall}, we have in $\afSr$
$$
\ttm_{(x),r}^+:=\zeta_r(\ti u^+_{(x)})=A^+(\bfl,r)+f\quad\text{ and }\quad
\ttm_{(y), r}^-:=\zeta_r(\ti u^-_{(y)})=A^-(\bfl,r)+g,$$
where $f$ (resp., $g$) is a linear combination of $B(\bfl,r)$  with $B\in\afThnp$ (resp., $B\in\afThnm$) and $B\prec A^+$ (resp., $B\prec A^-$). By Proposition~\ref{tri-affine Schur algebras}, we have for $p\geq0$
 $$\ttm_{(x),r}^+[\diag(\boldsymbol\sigma({}_pA))]\ttm_{(y),r}^-=[{}_pA]+\text{lower terms}.$$
Finally, by writing the words $x,y$ in full, it is clear to see that there exist upper triangular matrices  $A_1,A_2,\cdots,A_s$ and lower triangular matrices $A_{s+1},A_{s+2},\cdots,A_{t}$ in $\aftiThn^{ss}$ such that
$$\ttm_{(x),r}^+[\diag(\boldsymbol\sigma({}_pA))]=[{}_p(A_1)]\cdots[{}_p(A_s)]\quad\text{ and }\quad
[\diag(\boldsymbol\sigma({}_pA))]\ttm_{(y),r}^-=[{}_p(A_{s+1})]
\cdots
[{}_p(A_{t})],$$
as desired.
\end{proof}

We can now prove the following stabilization property for affine quantum Schur algebras.

\begin{Prop}\label{stabilization property}
Let $A,B\in\aftiThn$ and assume $\co(B)=\ro(A)$. Then there exist unique $X_1,\cdots,X_m\in\aftiThn$, unique $P_1(\up,\up'),\cdots,P_m(\up,\up')\in\sZ_1$ and an integer $p_0\geq 0$ such that, in $\afSpA$,
\begin{equation}\label{eq stabilization}
[{}_pB][{}_pA]=\sum_{1\leq i\leq m}P_i(\up,\up^{-p})[{}_pX_i]\quad\text{for all $p\geq p_0$}.
\end{equation}

\end{Prop}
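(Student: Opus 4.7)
The plan is to induct on $B$ with respect to the partial order $\sqsubseteq$ of \eqref{order sqsubset}, using Proposition~\ref{generalization of BLM 3.9} to decompose $[{}_pB]$ as a product of semisimple basis elements modulo terms strictly lower in $\sqsubseteq$, and Lemma~\ref{stabilization property1} as the engine that produces stabilised expansions with coefficients in $\sZ_1$. The ring $\sZ_1$ is tailored precisely so that specialising $\up' = \up^{-p}$ recovers the Gaussian polynomials of the form $\dblr{a+p \atop t}$ that arise in Lemma~\ref{stabilization property1}, and so that $\sZ_1$ is closed under the arithmetic operations triggered by iterated applications of that lemma. The integer $p_0$ will emerge from the finiteness of the intervals $(-\infty, B]$ in $\sqsubseteq$ (see \cite[6.1]{DF10}) together with the observation that each application of Lemma~\ref{stabilization property1} involves only finitely many summation indices $T$ with $\ro(T) = \al$.

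For the inductive step I would invoke Proposition~\ref{generalization of BLM 3.9} to write, with semisimple matrices $B_1,\dots,B_t \in \aftiThn^{ss}$ depending only on $B$ and a remainder $L_{B,p}$ that is a $\sZ$-linear combination of $[A']$ with $A' \sqsubset {}_pB$,
$$[{}_pB] \;=\; [{}_p(B_1)]\cdots[{}_p(B_t)] \;-\; L_{B,p}.$$
Multiplying by $[{}_pA]$ on the right, I would treat the main contribution $[{}_p(B_1)]\cdots[{}_p(B_t)][{}_pA]$ by iterating Lemma~\ref{stabilization property1} from the right: each step expands a product $[{}_p(B_k)][{}_pC]$ (with $\co(B_k) = \ro(C)$) as a finite $\sZ_1$-linear combination of $[{}_p C']$'s, and since $\sZ_1$ is a subring of $\mbq(\up)[\up', \up^{\prime-1}]$, subsequent applications keep the accumulated coefficients in $\sZ_1$. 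The remainder $L_{B,p}[{}_pA]$ would then be handled by the inductive hypothesis, once one knows that the matrices $A'$ appearing in $L_{B,p}$ are all of the form ${}_pB'$ for $B'$ ranging over a fixed finite subset of $\{B' : B' \sqsubset B\}$ (independent of $p$), with corresponding coefficients obtained by specialising fixed elements of $\sZ_1$ at $\up' = \up^{-p}$.

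This uniformity of $L_{B,p}$ is where the main difficulty lies. I would establish it by re-tracing the proof of Proposition~\ref{generalization of BLM 3.9}: the distinguished words $w_{B^+}$ and ${}^t w_{{}^t B^-}$ are intrinsic to $B$; the "lower" contributions in $\ttm_{(w_{B^+}),r}^+ = B^+(\bfl,r) + f$ and $\ttm_{({}^t w_{{}^t B^-}),r}^- = B^-(\bfl,r) + g$ come from $r$-independent Hall structure constants in $\sZ$ supplied by Proposition~\ref{tri Hall}; and the diagonal input $\bfsg({}_pB)$ differs from $\bfsg(B)$ by the same shift $p$ in each coordinate, so that Proposition~\ref{tri-affine Schur algebras} translates the "lower" matrices uniformly under the shift $B' \mapsto {}_p B'$. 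Granted this, $L_{B,p}[{}_pA]$ stabilises by the inductive hypothesis (well-founded because $(-\infty, B]$ is finite), and adding the two contributions will yield \eqref{eq stabilization}. Finally, uniqueness of the $X_i$ and $P_i$ will follow from the linear independence of the distinct basis elements $[{}_p X_i]$ in $\afSpA$: an identity $\sum_i (P_i(\up, \up^{-p}) - Q_i(\up, \up^{-p}))[{}_p X_i] = 0$ holding for all $p \ge p_0$ forces each $P_i - Q_i \in \sZ_1 \subset \mbq(\up)[\up', \up^{\prime-1}]$ to vanish at infinitely many specialisations $\up' = \up^{-p}$, and a nonzero Laurent polynomial in $\up'$ with coefficients in $\mbq(\up)$ has only finitely many nonzero roots, hence $P_i = Q_i$ in $\sZ_1$.
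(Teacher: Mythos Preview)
Your approach is essentially the same as the paper's, which inducts on $\ddet{B}$ and invokes Lemma~\ref{stabilization property1} together with Proposition~\ref{generalization of BLM 3.9}, referring to \cite[3.9]{BLM} and \cite[Prop.~14.1]{DDPW} for the details; your induction on $\sqsubseteq$ is equivalent since $B'\sqsubset B$ implies $\ddet{B'}<\ddet{B}$.

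One point to sharpen: your justification that the remainder $L_{B,p}$ is uniformly of the form $\sum c_{B'}(\up,\up^{-p})[{}_pB']$ with fixed $c_{B'}\in\sZ_1$ relies on ``re-tracing'' Proposition~\ref{generalization of BLM 3.9} via Proposition~\ref{tri-affine Schur algebras}, but the latter only asserts $\sZ$-coefficients for each fixed $\la$ and does not by itself give the $\sZ_1$-polynomial dependence on $p$. The cleaner route (and the one implicit in the paper's reference to BLM) is to first iterate Lemma~\ref{stabilization property1} on the semisimple product $[{}_p(B_1)]\cdots[{}_p(B_t)]$ \emph{alone} (without the factor $[{}_pA]$), obtaining a stabilised expansion $\sum_j Q_j(\up,\up^{-p})[{}_pZ_j]$; Proposition~\ref{generalization of BLM 3.9} then forces one $Z_j$ to be $B$ with $Q_j=1$ (by your own uniqueness argument applied coefficientwise) and the remaining $Z_j$'s to satisfy $Z_j\sqsubset B$, hence $\ddet{Z_j}<\ddet{B}$. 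This yields the stabilised form of $L_{B,p}$ directly, without appealing to the internal structure of the proof of Proposition~\ref{tri-affine Schur algebras}.
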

\begin{proof} The proof can be conducted by induction on $\ddet{B}$. With Lemma~\ref{stabilization property1} and Proposition~\ref{generalization of BLM 3.9}, the proof is entirely similar to that of \cite[3.9]{BLM} or \cite[Prop. 14.1]{DDPW}.
\end{proof}

Let $\aftisK$ be the free $\sZ_1$-module with basis $\{A\mid A\in\aftiThn\}$. Then, by Proposition~\ref{stabilization property}, we may make $\aftisK$ into an associative $\sZ_1$-algebra (without unit) by the multiplication:
\begin{equation}\label{tiKn}
B\cdot A=\begin{cases}\sum_{1\leq i\leq m}P_i(\up,\up')X_i, &\text{  if $\co(B)=\ro(A)$};\\
0,&\text{ otherwise.}\end{cases}
\end{equation}
Let $$\afsK=\aftisK\ot_{\sZ_1}\sZ,$$
where $\sZ$ is regarded as a $\sZ_1$-module by specializing $\up'$ to $1$. Then $\afsK$ becomes an associative $\sZ$-algebra with basis $\{\dob A\dcb:=A\ot 1\mid A\in\aftiThn\}$.
Let $\afbfKn=\afsK\ot_\sZ\mbq(\up)$. 

Following \cite[5.1]{BLM}, let $\afhbfKn$   be
the vector space of all formal (possibly infinite) $\mbq(\up)$-linear combinations
$\sum_{A\in\aftiThn}\beta_A\dob A\dcb$ such that,
for any ${\bf x}\in\mathbb Z^n$, the sets
${\{A\in\aftiThn\ |\ \beta_A\neq0,\ \ro(A)={\bf
x}\}}$ and ${\{A\in\aftiThn\ |\ \beta_A\neq0,\ \co(A)={\bf
x}\}}$ are finite.
We can define the product of two elements
$\sum_{A\in\widetilde\Xi}\beta_A\dob A\dcb$,
$\sum_{B\in\widetilde\Xi}\gamma_B\dob B\dcb$ in $\afhbfKn$ to be
$\sum_{A,B}\beta_A\gamma_B\dob A\dcb\dob B\dcb$.
This defines an associative algebra structure on $\afhbfKn$. The algebra $\afbfVn$ can also be realized as a $\mbq(\up)$-subalgebra of $\afhbfKn$, which we now describe.

The following result can be proved in a way similar to the proof of \cite[6.7]{DF} (cf. \cite[6.3]{Fu}).

\begin{Lem}\label{dzr}
The linear map $\dzr:\afsK\ra\afSr$ defined by
\begin{equation}\label{dzr([A])}
\dzr(\dob A\dcb)=\begin{cases}[A]& \mathrm{if\ }A\in\afThnr;\\
0&  \mathrm{otherwise}\end{cases}
\end{equation}
is an algebra epimorphism.
\end{Lem}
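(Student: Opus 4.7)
The map $\dzr$ is $\sZ$-linear on the basis $\{\dob A\dcb : A \in \aftiThn\}$ of $\afsK$ by definition, and is surjective since $\{[A] : A \in \afThnr\}$ is a $\sZ$-basis of $\afSr$ and each element arises as $\dzr(\dob A\dcb)$. The content of the proof is the multiplicativity $\dzr(\dob B\dcb \cdot \dob A\dcb) = \dzr(\dob B\dcb) \cdot \dzr(\dob A\dcb)$, which I would verify on basis elements and split into four cases.

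The easy cases are: when $\co(B) \neq \ro(A)$, both sides vanish by \eqref{tiKn} and by the idempotent structure of $\afSr$; when $\co(B) = \ro(A)$ but $\sg(A) \neq r$, every $X_i$ appearing in the expansion $\dob B\dcb\cdot\dob A\dcb = \sum_i P_i(\up, 1)\dob X_i\dcb$ satisfies $\sg(X_i) = \sg(A) \neq r$ (by $\sg$-preservation in the stabilization identity), so $\dzr$ kills each $\dob X_i\dcb$, while on the right $\dzr(\dob A\dcb) = 0$.

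In the main case $\co(B) = \ro(A)$, $\sg(A) = r$, and $A, B \in \afThnr$, I would use the stabilization formula \eqref{eq stabilization} at $p = 0$. This is legitimate because the inductive construction in the proof of Proposition~\ref{stabilization property} via Lemma~\ref{stabilization property1} and Proposition~\ref{generalization of BLM 3.9} produces identities in $\afSpA$ that are valid for every $p \geq 0$ as long as the matrices involved lie in $\afThn$, which is automatic at $p = 0$ when $A, B \in \afThnr$. Specialising at $p = 0$ yields $[B][A] = \sum_i P_i(\up, 1)[X_i]$ in $\afSr$ (with $[X_i] = 0$ if $X_i \notin \afThnr$ by convention), which is exactly the image under $\dzr$ of the $\afsK$-product.

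The remaining, and hardest, case is $\co(B) = \ro(A)$, $\sg(A) = r$, but with $A$ (say) having some $a_{i_0, i_0} < 0$. The right side is zero and I want $\dob B\dcb \cdot \dob A\dcb$ to lie in the subspace $K := \spann_\sZ\{\dob X\dcb : X \in \aftiThn \setminus \afThn\} \subseteq \ker(\dzr)$. The core computation is that $K$ is stable under left multiplication by semisimple-like elements (those to which Lemma~\ref{stabilization property1} applies): for upper-triangular semisimple-like $B$, writing $\dob B\dcb \cdot \dob A\dcb = \sum_T P_{T, A}(\up, 1)\dob A+T-\ti T\dcb$, if $(A+T-\ti T)_{i_0,i_0} \geq 0$ then $0 \leq a_{i_0,i_0} + t_{i_0,i_0} - t_{i_0-1,i_0} < t_{i_0,i_0}$ (using $a_{i_0,i_0} < 0 \leq t_{i_0-1,i_0}$), forcing the Gaussian factor $\ol{\dbbl{a_{i_0,i_0}+t_{i_0,i_0}-t_{i_0-1,i_0}\atop t_{i_0,i_0}}\dbbr}$ of $P_{T,A}(\up,1)$ to vanish; if instead $(A+T-\ti T)_{i_0,i_0} < 0$ then the term already lies in $K$. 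The lower-triangular case uses $Q_{T,A}$ and is analogous. For general $B$, I would decompose $\dob B\dcb$ via the $\afsK$-lift of Proposition~\ref{generalization of BLM 3.9} into a product of semisimple-like factors modulo $\sqsubset$-lower terms, and iterate. The main obstacle will be managing this $\sqsubseteq$-induction together with the iterated Gaussian-vanishing argument, and verifying at each step that $K$ is preserved under left multiplication by the intervening semisimple-like factors.
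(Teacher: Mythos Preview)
Your overall strategy --- verify multiplicativity when the left factor is semisimple, then induct on $\ddet{B}$ using the triangular decomposition from Proposition~\ref{generalization of BLM 3.9} --- is the correct one and is what the paper's references \cite[6.7]{DF} and \cite[6.3]{Fu} do. Two points, however, need repair.

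First, your justification of case 3 is not right as stated. The iterated semisimple multiplications in the proof of Proposition~\ref{stabilization property} \emph{do} produce intermediate matrices $A+T-\ti T$ with negative diagonal entries, even when you start from $A,B\in\afThnr$; it is simply false that ``the matrices involved lie in $\afThn$'' at $p=0$. What actually makes the $p=0$ specialization valid is precisely your case-4 computation: once a term acquires a negative diagonal entry, every further left multiplication by a semisimple element keeps it in $\ker\dzr$ (the relevant Gaussian factor vanishes). So case 3 is not independent of case 4. The proof should be organized by first establishing that $\ker\dzr$ is closed under left multiplication by semisimple elements (your case-4 argument, applied to all $A\in\aftiThn$), and only then running the $\ddet{B}$-induction; in that induction the lower terms $C'$ arising from the $\afKn$-lift of Proposition~\ref{generalization of BLM 3.9} may themselves have negative diagonal, and you dispose of $\dob C'\dcb\cdot\dob A\dcb$ via the ideal property rather than by the induction hypothesis alone.

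Second, the ``(say)'' in case 4 hides a real asymmetry. Your Gaussian-vanishing argument uses the diagonal factor $\ol{\dbbl{a_{i_0,i_0}+t_{i_0,i_0}-t_{i_0-1,i_0}\atop t_{i_0,i_0}}\dbbr}$ at the index $i_0$ where $a_{i_0,i_0}<0$. When instead $B$ is semisimple with a negative diagonal entry and $A\in\afThn$, all $a_{i,i}\ge 0$, so that factor need not vanish, and a direct case analysis is noticeably more delicate (the vanishing, when it occurs, typically comes from an off-diagonal Gaussian). The clean way to close this gap is to invoke the transpose anti-automorphism $\dot\tau$ on $\afKn$ sending $\dob A\dcb$ to $\dob\tA\dcb$ --- whose existence follows from the stabilization property and the anti-automorphisms $\tau_r$ on $\afSr$, independently of the present lemma --- to convert the right-ideal property of $\ker\dzr$ into the left-ideal property you have already established.
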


The map $\dzr:\afsK\ra\afSr$  induces a surjective algebra homomorphism
\begin{equation}\label{hzr}
\hzr:\afhbfKn\ra\afbfSr
\end{equation}
sending $\sum_{A\in\aftiThn}\bt_A\dob A\dcb$ to $\sum_{A\in\aftiThn}\bt_A\dxr(\dob A\dcb)$.
Consequently, we get a surjective algebra homomorphism
\begin{equation}\label{hz}
\hz:\afhbfKn\twoheadrightarrow\afbfSn.
\end{equation}
defined by sending $x$ to $\hz(x):=(\hzr(x))_{r\geq 0}$. It is clear that we have $\hz(\afbfKn)=\afbfSno$ where $\afbfSno=\bop_{r\geq 0}\afbfSr$. Thus, by restriction $\hz$ to $\afbfKn$, we get a surjective algebra homomorphism from
$\afbfKn$ to $\afbfSno$.

For $A\in\afThnpm$, $\bfj\in\afmbzn$ and $\la\in\afmbnn$, let
$$A\dop\bfj\dcp:=\sum_{\mu\in\afmbzn}\up^{\mu\centerdot\bfj}
\dob A+\diag(\mu)\dcb\;\text{ and }\;A\dop\bfj,\la\dcp:=\sum_{\mu\in\afmbzn}\up^{\mu\centerdot\bfj}
\leb{\mu\atop\la}\rib\dob A+\diag(\mu)\dcb.$$
By Proposition~\ref{tri-affine Schur algebras}, the stabilisation property Proposition~\ref{stabilization property}
  implies that for any
$A\in\aftiThn$,
\begin{equation}\label{dot tri relation}
A^+\dop\bfl\dcp\dob\diag(\bfsg(A))\dcb A^-\dop\bfl\dcp
=\dob A\dcb+\text{a $\sZ$-linear comb. of $\dob A'\dcb$ with $A'\sqsubset A$}.
\end{equation}

Let $\afbfsfVn$ be the $\mbq(\up)$-subspace of $\afhbfKn$ spanned by all $A\dop\bfj\dcp$ ($A\in\afThnpm$ and $\bfj\in\afmbzn$). Let $\afsfVn$ be the $\sZ$-submodule of $\afhbfKn$ spanned by $A\dop\bfj,\la\dcp$ for all $A,\bfj,\la$ as above.

\begin{Thm}\label{unmodified}
(1) $\afbfsfVn$ is a subalgebra of $\afhbfKn$ and
the restriction of $\hz$ to $\afbfsfVn$ induces an  algebra isomorphism $\hz:\afbfsfVn\ra\afbfVn, A\dop\bfj\dcp\mapsto A(\bfj)$.

(2) The $\sZ$-module $\afsfVn$ is a subalgebra of $\afhbfKn$ and
the restriction of $\hz$ to $\afsfVn$ induces an  algebra isomorphism $\hz:\afsfVn\ra\afVn, A\dop\bfj,\la\dcp\mapsto A(\bfj,\la)$.
\end{Thm}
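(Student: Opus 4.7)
The plan is to prove both parts in parallel via three steps: compute $\hz$ on the spanning elements, establish linear independence of those elements in $\afhbfKn$, then prove closure under multiplication.

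For the first step, unwinding \eqref{dzr([A])} and \eqref{hz} gives
\[
\hzr(A\dop\bfj\dcp) = \sum_{\mu\in\afmbzn}v^{\mu\cdot\bfj}\dzr(\dob A+\diag(\mu)\dcb) = \sum_{\mu \in \afLa(n, r-\sg(A))} v^{\mu\cdot\bfj}[A+\diag(\mu)] = A(\bfj, r),
\]
so $\hz(A\dop\bfj\dcp) = A(\bfj)$; inserting the factor $\leb\mu\atop\la\rib$ gives $\hz(A\dop\bfj,\la\dcp) = A(\bfj,\la)$. For linear independence, the key observation is that matrices in $\afThnpm$ have zero diagonal, so the supports $\{A+\diag(\mu): \mu \in \afmbzn\}$ of $A\dop\bfj\dcp$ for distinct $A \in \afThnpm$ are disjoint. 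A relation $\sum c_{A,\bfj} A\dop\bfj\dcp = 0$ therefore yields, for each fixed $A$, $\sum_\bfj c_{A,\bfj} v^{\mu\cdot\bfj} = 0$ for all $\mu \in \afmbzn$, and the usual Vandermonde argument over $\mbq(v)$ forces $c_{A,\bfj} = 0$. Combined with the basis property of $\{A(\bfj)\}$ from \cite[Lem.~4.1]{DF13} (resp.\ the $\sZ$-basis of Lemma \ref{basis for afVn}), this shows $\hz$ restricts to a $\mbq(v)$-linear bijection $\afbfsfVn \to \afbfVn$ (resp.\ a $\sZ$-linear bijection $\afsfVn \to \afVn$).

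The third step---closure under multiplication---is the main obstacle. The plan is to lift the multiplication formulas of Theorem \ref{mul formulas in quantum affine gln} (resp.\ Proposition \ref{formula in Vz}) from $\afbfSn$ up to $\afhbfKn$. The $0$-part is immediate: $\dob\diag(\mu)\dcb$ acts on $\dob A+\diag(\nu)\dcb$ as a row-projector forcing $\mu = \ro(A)+\nu$, which collapses the double sum to $0\dop\bfj\dcp \cdot A\dop\bfj'\dcp = v^{\bfj\cdot\ro(A)} A\dop\bfj+\bfj'\dcp$ and the symmetric right-action formula. For the semisimple products $S_\al\dop\bfl\dcp \cdot A\dop\bfj\dcp$ and ${}^tS_\al\dop\bfl\dcp \cdot A\dop\bfj\dcp$, I would substitute Lemma \ref{stabilization property1} into each diagonal summand, specialize $v'=1$ using the elementary identity $\prod_{s=1}^t \frac{v^{-2(N-s+1)}-1}{v^{-2s}-1} = \overline{\dbbl{N\atop t}\dbbr}$, reindex the resulting diagonal sum by $\nu' = \nu + \dt_T - \dt_{\ti T}$ (resp.\ $\nu' = \nu - \dt_T + \dt_{\ti T}$), and collapse using the Gaussian identities \eqref{SGPidentity}---exactly mirroring the proof of Proposition \ref{formula in Vz}(2)--(3). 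The outcome is an element of $\afbfsfVn$ (resp.\ $\afsfVn$) of the form $\sum_T c_T\,(A+T^\pm - \ti T^\pm)\dop\bfj_T, \dt_T\dcp$ whose $\hz$-image reproduces the $\afbfSn$-formula.

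Since $\afbfVn$ is generated over $\mbq(v)$ by $0(\bfj), S_\al(\bfl), {}^tS_\al(\bfl)$ (Theorem \ref{mul formulas in quantum affine gln}) and $\afVn$ is generated over $\sZ$ by the corresponding elements together with $0(\bfl,t\afbse_i)$ (Proposition \ref{Vz-subalgebra}), these left-multiplication formulas imply that $\afbfsfVn$ (resp.\ $\afsfVn$) coincides with the subalgebra of $\afhbfKn$ generated by $0\dop\bfj\dcp$, $S_\al\dop\bfl\dcp$, ${}^tS_\al\dop\bfl\dcp$, yielding closure. The algebra structure transported from $\afbfVn$ (resp.\ $\afVn$) via $\hz$ then gives the claimed isomorphisms. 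The hardest piece is the semisimple computation: one must track how the $\nu$-dependence of $P_{T, A+\diag(\nu)}(v, 1)$---entering through the shifted diagonal entries $\nu_i + t_{i,i} - t_{i-1,i}$---combines with the $v^{\nu\cdot\bfj}$ weights via \eqref{SGPidentity} to produce coefficients $c_T$ that are independent of $\nu'$ after reindexing, a direct but lengthy bookkeeping exercise precisely paralleling Proposition \ref{formula in Vz}(2).
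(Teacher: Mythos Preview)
Your approach is correct and more explicit than the paper's. The paper's three-line proof observes that $\hz|_{\afbfsfVn}$ is injective (via a kernel analysis, citing \cite[\S8]{DF10}), notes that $\hz(\afbfsfVn)=\afbfVn$ and $\hz(\afsfVn)=\afVn$, and then simply asserts that the result follows from the known subalgebra property of $\afbfVn$ (Theorem~\ref{realization}) and of $\afVn$ (Proposition~\ref{Vz-subalgebra}). The step you work out in detail---showing closure of $\afbfsfVn$ under multiplication by lifting the formulas of Theorem~\ref{mul formulas in quantum affine gln} and Proposition~\ref{formula in Vz} to $\afhbfKn$ via Lemma~\ref{stabilization property1} specialised at $\up'=1$---is exactly what the paper leaves implicit and defers to the cited reference. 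Note that injectivity of $\hz|_{\afbfsfVn}$ together with the target being a subalgebra does \emph{not} by itself force the source to be closed under the multiplication of $\afhbfKn$: one genuinely needs either your direct computation or an equivalent statement from \cite{DF10} to conclude. Your route is self-contained at the cost of the bookkeeping you describe; the paper's is concise but rests on external input for precisely this closure step.
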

\begin{proof}By looking at the kernel of $\hz$ (cf. \cite[\S8]{DF10}), it is clear that the restriction of $\hz$ to ${\afbfsfVn}$ is injective. Note that $\hz(\afbfsfVn)=\afbfVn$ and $\hz(\afsfVn)=\afVn$. Now the assertion follows from
Theorem~\ref{realization} and Proposition~\ref{Vz-subalgebra}.
\end{proof}
This result together with Theorem~\ref{realization} gives another realisation of $\bfU(\afgl)$. This is an {\it unmodified} affine generalisation of the BLM construction in \cite{BLM}. {\it In particular, we will identify $\dbfHa$ with $\afbfsfVn$ and
$\dHa$ with $\afsfVn$ in the sequel.}

We end this section with a discussion on a realisation of the modified quantum group $\ddbfHa$. We will prove that $\ddbfHa$ and its integral form $\ddHa$ is isomorphic the affine BLM algebras $\afbfKn$ and $\afKn$, respectively.

 Let  $\afPin=\{\afbse_j-\afbse_{j+1}\mid 1\leq j\leq n\}.$
According to \cite[3.5.2]{Fu13}, the algebra $\dbfHa$ is a $\afmbzn$-graded algebra with
$\deg(u_A^+)=\ro(A)-\co(A),\
\deg(u_A^-)=\co(A)-\ro(A)\ \text{and}\ \deg(K_i^{\pm 1})=0$
for $A\in\afThnp$ and $1\leq i\leq n$.
For $\nu\in\afmbzn$, let $\dbfHa_\nu$ be the set of homogeneous elements in $\dbfHa$ of degree $\nu$. Then we have
$\dbfHa=\bop_{\nu\in\mbz\afPin}\dbfHa_\nu$.

For $\la,\mu\in\afmbzn$ we set ${}_\la\dbfHa_\mu=\dbfHa/{}_\la I_\mu$, where
\begin{equation}\label{laImu}
{}_\la I_\mu=\big(\sum_{\bfj\in\afmbzn}(\Kbfj-
 \up^{\la\cdot\bfj})\dbfHa+\sum_{\bfj\in\afmbzn}\dbfHa(\Kbfj
 -\up^{\mu\cdot\bfj})\big).
 \end{equation}
Let $\pi_{\la,\mu}:\dbfHa\ra{}_\la\dbfHa_\mu$ be the canonical projection. Since $\pi_{\la,\mu}(\dbfHa_{\la-\mu})={}_\la\dbfHa_\mu$ (cf. \cite[Lemma 6.2]{DF}), it follows that ${}_\la\dbfHa_\mu$ is spanned by the elements
$\pi_{\la,\mu}(u_A^+u_B^-)$   for all $A,B,\la,\mu$ with
$\la-\mu=\deg(u_A^+u_B^-)$.
Let
$$\ddbfHa:=\bop_{\la,\mu\in\afmbzn}{}_\la\dbfHa_\mu.$$

We define the product in $\ddbfHa$ as follows.
For $\la',\mu',\la'',\mu''\in\afmbzn$ with
$\la'-\mu',\la''-\mu''\in\mbz\afPin$ and any $t\in\dbfHa_{\la'-\mu'}$,
$s\in\dbfHa_{\la''-\mu''}$,  the product
$\pi_{\la',\mu'}(t)\pi_{\la'',\mu''}(s)$ is equal to $\pi_{\la',\mu''}(ts)$ if  $\mu'=\la''$, and it is zero, otherwise.
Then $\ddbfHa$ becomes an associative $\mbq(\up)$-algebra with this product. The algebra $\ddbfHa$ is naturally a $\dbfHa$-bimodule defined by
$t'\pi_{\la',\la''}(s)t''=\pi_{\la'+\nu',\la''-\nu''}(t'st'')$,
for $t'\in\dbfHa_{\nu'}$, $s\in\dbfHa$, $t''\in\dbfHa_{\nu''}$ and $\la',\la''\in\afmbzn$ (cf. \cite{Lubk,Fu13}).
In particular, putting $1_\la=\pi_{\la,\la}(1)$, we have $u_A^+1_\la
u_B^-=\pi_{\la+\deg(u_A^+),\la-\deg(u_B^-)}(u_A^+u_B^-)$ and $\ddbfHa$ is spanned by the elements $u_A^+1_\la u_B^-$ for all $A,B,\la$.

Let $\ddHa$ be the $\sZ$-submodule of $\ddbfHa$ spanned by the elements $u_A^+1_\la u_B^-$ for $A,B\in\afThnp$ and $\la\in \afmbzn$. It is proved in \cite[Th.~4.2]{Fu13} that $\ddHa$ is a $\sZ$-subalgebra of $\ddbfHa$. We now can realise $\ddbfHa$ and $\ddHa$ as $\afbfKn$ and $\afKn$, respectively; cf. \cite[Th.~6.3]{DF}.

\begin{Thm}\label{realization of ddHa}
The linear map $\Phi:\ddbfHa\ra\afbfKn$ sending $\pi_{\la\mu}(u)$ to
$\dob\diag(\la)\dcb u\dob\diag(\mu)\dcb$ for all $u\in\dbfHa$ and
$\la,\mu\in\afmbzn$, is an algebra isomorphism. Furthermore we have $\Phi(\ddHa)=\afKn$.
\end{Thm}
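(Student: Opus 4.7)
The plan is to adapt the non-affine argument of \cite[Th.~6.3]{DF} to the affine setting by working inside $\afhbfKn$ via the identification $\dbfHa\cong\afbfsfVn$ of Theorem~\ref{unmodified}(1). Under this identification, $K^\bfj$ becomes $0(\bfj)=\sum_{\nu\in\afmbzn}v^{\nu\cdot\bfj}\dob\diag(\nu)\dcb$, and the orthogonality $\dob\diag(\la)\dcb\dob\diag(\nu)\dcb=\delta_{\la,\nu}\dob\diag(\la)\dcb$ yields
\[
\dob\diag(\la)\dcb(K^\bfj-v^{\la\cdot\bfj})=0\quad\text{and}\quad (K^\bfj-v^{\mu\cdot\bfj})\dob\diag(\mu)\dcb=0.
\]
Hence $\dob\diag(\la)\dcb\cdot{}_\la I_\mu\cdot\dob\diag(\mu)\dcb=0$, so the rule $\Phi(\pi_{\la,\mu}(u))=\dob\diag(\la)\dcb u\dob\diag(\mu)\dcb$ descends to ${}_\la\dbfHa_\mu$ and $\Phi$ is well-defined.

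For multiplicativity the key observation is that, for a homogeneous element $t\in\dbfHa_{\la'-\mu'}$, the grading in $\afhbfKn$ forces $t\dob\diag(\mu')\dcb=\dob\diag(\la')\dcb t$, because $t$ shifts the row-diagonal by $\deg t$. Combined with orthogonality of the diagonal idempotents, this gives, for $\mu'=\la''$,
\[
\Phi(\pi_{\la',\mu'}(t))\Phi(\pi_{\la'',\mu''}(s))=\dob\diag(\la')\dcb t\dob\diag(\mu')\dcb s\dob\diag(\mu'')\dcb=\dob\diag(\la')\dcb ts\dob\diag(\mu'')\dcb,
\]
matching $\Phi$ of the product in $\ddbfHa$; when $\mu'\ne\la''$ both sides vanish.

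For bijectivity I would compare natural bases. The PBW basis $\{u_A^+K^\bfj u_B^-\}$ of $\dbfHa$ together with the congruences $K^\bfj\equiv v^{\la\cdot\bfj}$ (from the left) in ${}_\la\dbfHa_\mu$ show that $\ddbfHa$ is spanned by the family $\{u_A^+1_\la u_B^-\mid A,B\in\afThnp,\la\in\afmbzn\}$, which is in natural bijection with $\aftiThn$ via $(A,\la,B)\mapsto C_{A,\la,B}:=A+{}^tB+\diag(\la-\co(A))$, matching the basis $\{\dob C\dcb\}_{C\in\aftiThn}$ of $\afbfKn$. A direct computation using the formulas $\dob\diag(\nu)\dcb u_A^+=\dob A+\diag(\nu-\ro(A))\dcb$ and $u_B^-\dob\diag(\nu')\dcb=\dob{}^tB+\diag(\nu'-\ro(B))\dcb$, followed by the triangular identity \eqref{dot tri relation}, is expected to yield
\[
\Phi(u_A^+1_\la u_B^-)=v^{\bullet}\dob C_{A,\la,B}\dcb+\sum_{C'\sqsubset C_{A,\la,B}}c_{C'}\dob C'\dcb,\qquad c_{C'}\in\sZ,
\]
for some exponent $\bullet$ depending on $(A,\la,B)$. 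Since intervals $(-\infty,C]$ are finite under $\sqsubset$, this unitriangular relation shows that $\Phi$ carries the spanning set bijectively onto the basis $\{\dob C\dcb\}$, hence is a $\mbq(v)$-linear isomorphism, and so, by multiplicativity, an algebra isomorphism.

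For the integral claim $\Phi(\ddHa)=\afKn$, the same triangular identity is $\sZ$-unitriangular, so combining the fact that $\ddHa$ is the free $\sZ$-module on $\{u_A^+1_\la u_B^-\}$ (by \cite[Th.~4.2]{Fu13}) with the fact that $\afKn$ is the free $\sZ$-module on $\{\dob C\dcb\}$ gives the equality. The principal obstacle is the explicit bookkeeping of the leading term: one must pin down the exponent $\bullet$ in the triangular relation above and verify that $v^\bullet$ is a unit in $\sZ$, which amounts to combining Proposition~\ref{tri-affine Schur algebras} (and its lift \eqref{dot tri relation} to $\afhbfKn$) with the multiplication formulas of Proposition~\ref{formula in Vz} to confirm that the diagonal of the change-of-basis matrix over $\sZ$ is invertible.
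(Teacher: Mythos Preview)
Your approach is essentially the paper's own: establish that $\Phi$ is a well-defined algebra homomorphism as in \cite[Th.~6.3]{DF}, then use the triangular relation \eqref{dot tri relation} to see that the spanning set $\{u_A^+1_\la u_B^-\}$ maps onto the basis $\{\dob C\dcb\}$ of $\afbfKn$ (and over $\sZ$ onto the basis of $\afKn$), whence $\Phi$ is an isomorphism and the integral statement follows by the definition of $\ddHa$.

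Two minor corrections are worth recording. First, your leading-term matrix is slightly off: one needs $C_{A,\la,B}=A+{}^tB+\diag(\la-\bfsg(A+{}^tB))=A+{}^tB+\diag(\la-\co(A)-\co(B))$, not $\diag(\la-\co(A))$, so that $\bfsg(C_{A,\la,B})=\la$ matches the middle idempotent. Second, your ``principal obstacle'' is not an obstacle at all: \eqref{dot tri relation} already gives leading coefficient exactly $1$ for $\ti u_{A}^+\,1_{\la}\,\ti u_{B}^-$, and since $u_A^\pm=v^{c_A}\ti u_A^\pm$ with $c_A\in\mbz$, the leading coefficient for $u_A^+1_\la u_B^-$ is a power of $v$, a unit in $\sZ$. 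No further bookkeeping is needed, and in particular you do not need to invoke \cite[Th.~4.2]{Fu13} for freeness of $\ddHa$; the $\sZ$-unitriangularity from \eqref{dot tri relation} gives it directly, just as the paper observes.
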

\begin{proof}By a proof similar to that of \cite[6.3]{DF},
it is easy to see that $\Phi$ is an algebra homomorphism. In particular, $\Phi(1_\la)=\dob\diag(\la)\dcb$.
By \eqref{dot tri relation}, the image of the spanning set $\{u_A^+1_\la u_B^-\mid A,B\in\afThnp,\la\in \afmbzn\}$
is in fact a basis for $\afbfKn$, proving the first assertion which implies the last assertion by definition.
\end{proof}
We will identify $\ddbfHa$ with $\afbfKn$ and $\ddHa$ with $\afKn$ via the map $\Phi$ defined in Theorem~\ref{realization of ddHa} and identify $\dbfHa$ with $\afbfsfVn$ and $\dHa$ with $\afsfVn$ as in Theorem~\ref{unmodified}. Then the $\dbfHa$-bimodule structure on $\ddbfHa$ satisfies the following simple formula: for all $A\in\afThnpm,\bfj,\la\in\afmbzn$,
\begin{equation}\label{bimodule}
A\dop\bfj\dcp\dob \diag(\la)\dcb=\dob A+\diag(\la-\co(A))\dcb,\quad \dob \diag(\la)\dcb A\dop\bfj\dcp=\dob A+\diag(\la-\ro(A))\dcb.
\end{equation}

For $A\in\aftiThn$, choose words $w_{A^+},w_{A^-}\in\ti\Sg$ such
that  \eqref{eq tri Hall} and its opposite version (obtained by applying \eqref{tau} to \eqref{eq tri Hall}) hold. Then, by
\eqref{dot tri relation},
\begin{equation}\label{tri afKn}
\mpM^{(A)}:=\ti u^+_{(w_{A^+})}1_{\bfsg(A)}\ti u_{(w_{A^-})}^-=\ti u^+_{(w_{A^+})}\dob\diag(\bfsg(A))\dcb\ti u_{(w_{A^-})}^-=\dob A\dcb+\sum_{B\sqsubset A\atop B\in\aftiThn}h_{A,B}\dob B\dcb,
\end{equation}
where $h_{A,B}\in\sZ$. Thus, we have immediately:

\begin{Coro}\label{ddHa monomial basis} The set $\{\mpM^{(A)}\mid A\in\aftiThn\}$ forms a $\sZ$-basis
for $\ddHa$.
\end{Coro}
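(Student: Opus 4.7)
The plan is to derive the statement directly from the triangular relation \eqref{tri afKn} combined with the identification $\ddHa\cong\afKn$ provided by Theorem~\ref{realization of ddHa}. Under this identification, $\{\dob A\dcb\}_{A\in\aftiThn}$ is already a $\sZ$-basis of $\ddHa$ by the very construction of $\afKn=\aftisK\otimes_{\sZ_1}\sZ$, so it suffices to show that the change-of-basis matrix from $\{\mpM^{(A)}\}$ to $\{\dob A\dcb\}$ is invertible over $\sZ$.

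First I would verify that $\mpM^{(A)}\in\ddHa$ for every $A\in\aftiThn$. The monomials $\ti u^+_{(w_{A^+})}$ and $\ti u^-_{(w_{A^-})}$ are, up to powers of $\up$ lying in $\sZ$, products of the semisimple generators $u^\pm_\la$, hence they lie in $\dHap$ and $\dHam$, respectively. The element $1_{\bfsg(A)}=\pi_{\bfsg(A),\bfsg(A)}(1)$ equals $u_0^+\cdot 1_{\bfsg(A)}\cdot u_0^-$ and therefore belongs to the defining spanning set of $\ddHa$. Hence $\mpM^{(A)}=\ti u^+_{(w_{A^+})}\,1_{\bfsg(A)}\,\ti u^-_{(w_{A^-})}\in\ddHa$.

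Next, the formula \eqref{tri afKn} asserts
\[
\mpM^{(A)}=\dob A\dcb+\sum_{B\sqsubset A,\,B\in\aftiThn}h_{A,B}\dob B\dcb,\qquad h_{A,B}\in\sZ.
\]
The relation $\sqsubseteq$ is a partial order on $\aftiThn$ with all down-intervals $(-\infty,A]$ finite (as recorded just before Proposition~\ref{tri-affine Schur algebras}). Consequently the matrix $(h_{A,B})$ is unitriangular over $\sZ$ and has finite support in every row, so it can be inverted row by row within each finite interval $(-\infty,A]$, producing an expression
\[
\dob A\dcb=\mpM^{(A)}+\sum_{B\sqsubset A}h'_{A,B}\,\mpM^{(B)},\qquad h'_{A,B}\in\sZ.
\]
This shows simultaneously that $\{\mpM^{(A)}\}_{A\in\aftiThn}$ spans $\ddHa$ over $\sZ$ and is $\sZ$-linearly independent, completing the argument. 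No real obstacle is expected: the triangular decomposition has been packaged into \eqref{tri afKn}, and the only nontrivial ingredient is the finiteness of the down-intervals for $\sqsubseteq$, which is already available in the paper.
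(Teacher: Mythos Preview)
Your proposal is correct and follows exactly the reasoning the paper has in mind: the corollary is stated right after \eqref{tri afKn} with the words ``Thus, we have immediately'', and your argument simply spells out that immediacy---unitriangularity of the change-of-basis matrix from $\{\mpM^{(A)}\}$ to the known basis $\{\dob A\dcb\}$ over the partial order $\sqsubseteq$, together with the interval-finiteness established in Lemma~\ref{finite}, gives the result.
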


\

\section{Canonical bases for the integral modified quantum affine $\frak{gl}_n$}

It is well known that the positive part of a quantum enveloping algebra $\bfU$ has a canonical basis with remarkable properties (see \cite{Kas1}, [23], [24]). In contrast, there is no canonical basis for $\bfU$. However, the modified form $\dot\bfU$ of $\bfU$ can have a canonical basis (see \cite{Kas2}, \cite{Lu92}, \cite{Lubk}).
We now define the canonical basis relative the basis $\{\dob A\dcb\}_{A\in\aftiThn}$ for $\ddHa=\afsK$. Our strategy is to use a stabilisation property for the bar involution on $\afSr$ to define a bar involution on $\aftisK$
(see \eqref{tiKn}) which then induces a bar involution on $\afsK$.

We first define the bar involution on $\afSr$ via the one on the Hecke algebra, following \cite{Du92} (cf. \cite{VV99}).
Let $W_r$ be the subgroup of $\afsygr$ generated by $s_i$ for $1\leq i\leq r$.
Let $\rho$ be the permutation of $\mbz$ sending $j$ to $j+1$ for all $j\in\mbz$.
Let $\sH(W_r)$ be the $\sZ$-subalgebra of $\afHr$ generated by $T_{s_i}$ for $1\leq i\leq r$.
Let $\{C_w'\mid w\in W_r\}$ be the canonical basis of $\sH(W_r)$ defined in \cite[1.1(c)]{KL79}. For $w=\rho^ax\in\affSr$ with $a\in\mbz$ and $x\in W_r$, let $C'_w=T_\rho^aC_x'.$
Then the set $\{C'_w\mid w\in\affSr\}$ forms a $\sZ$-basis for $\afHr$.
Note that $C_{w_0,\mu}'=v^{-\ell(w_{0,\mu})}x_\mu$. Let $\bar\ :\afHr\ra\afHr$ be the ring involution
defined by $\bar v=v^{-1}$ and $\bar T_w=T_{w^{-1}}^{-1}$.
We define a map $\bar\ :\afSr\ra\afSr$ such that
$\bar v=v^{-1}$ and $\bar f(C_{w_0,\mu}'h)=\ol{f(C_{w_0,\mu}')}h$ for $f\in\Hom_{\afHr}(x_{\mu}\afHr,x_\la\afHr)$ and $h\in\afHr$. Then the map $\bar\ :\afSr\ra\afSr$ is a ring involution.\footnote{See \cite[Prop. 3.2]{Du92} for a proof.}  We need to look some first properties of the bar involution in Lemma~\ref{bar monomial} before proving its stabilisation property in Proposition~\ref{bar stab}.

Given $A\in\afThnr$, write $y_A=w$ if  $A=\jmath_\vtg(\la,w,\mu)$, and also write $y_A^+$ for the  unique longest element in $\fS_\la w\fS_\mu$. For $\la\in\afLanr$, let $w_{0,\la}$ be the longest element in $\fS_\la$.

\begin{Lem}\label{longest element}
For $A\in\afThnr$ we have $\ell(y_A^+)=d_A+\ell(w_{0,\mu})$ where $\mu=\co(A)$ and $d_A$ is given in \eqref{dA}.
\end{Lem}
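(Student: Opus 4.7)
The plan is to factor $y_A^+$ via the standard Coxeter-group double-coset decomposition, compute $\ell(d)$ (where $d:=y_A$ is the minimum-length double-coset representative) by an inversion count, and match the resulting formula to $d_A$ through elementary row-sum manipulations.

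First I would analyse the stabiliser $H:=\fS_\la\cap d\fS_\mu d^{-1}$. Since $a_{k,l}=|R_k^\la\cap dR_l^\mu|$ by the definition of $\jmath_\vtg$, the collection $\{R_k^\la\cap dR_l^\mu\}$ refines both partitions $\{R_k^\la\}$ and $\{dR_l^\mu\}$ of $\mbz$; any element of $H$ preserves both of these partitions, so must permute within each intersection separately. Hence $H\cong\prod_{1\leq k\leq n,\,l\in\mbz}\fS_{a_{k,l}}$, a \emph{finite} parabolic subgroup of $\affSr$, and $\ell(w_{0,H})=\sum_{1\leq k\leq n,\,l\in\mbz}\binom{a_{k,l}}{2}$. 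Because $d\in\afmsD_{\la,\mu}$ and $H$ is finite, the standard Coxeter-group argument gives a unique factorisation $y=u\cdot d\cdot w'$ for each $y\in\fS_\la d\fS_\mu$, with $u$ a minimum-length representative of a coset in $\fS_\la/H$ and $w'\in\fS_\mu$; in this factorisation the lengths add, so maximising yields
$$\ell(y_A^+)=\ell(w_{0,\la})-\ell(w_{0,H})+\ell(d)+\ell(w_{0,\mu}).$$

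Next, $\ell(d)$ is computed by counting inversions. Viewing $d$ as the ``stable sort'' that sends each $R_l^\mu$ into the rows $R_k^\la$ via sub-blocks of sizes $a_{1,l},a_{2,l},\ldots$ in that order, one checks that inversions $(m,m')$ of $d$ arise exactly from pairs of cells $(k,l)$ and $(k',l')$ with $l<l'$ and $k>k'$; restricting $m$ to the fundamental domain yields
$$\ell(d)=\sum_{\substack{1\leq l\leq n\\ l<l',\,k>k'}}a_{k,l}\,a_{k',l'}.$$

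Finally, split $d_A=\sum_{1\leq i\leq n,\,i\geq k,\,j<l}a_{i,j}a_{k,l}$ into its $i=k$ and $i>k$ parts. After the relabelling $(i,j)\leftrightarrow(k',l')$ (keeping $(k,l)$ fixed), the $i>k$ piece becomes exactly the expression above for $\ell(d)$. For the $i=k$ piece, the elementary identity $\sum_{j<l}a_{i,j}a_{i,l}=\binom{\la_i}{2}-\sum_{j\in\mbz}\binom{a_{i,j}}{2}$, obtained by expanding $\la_i^2=(\sum_j a_{i,j})^2$, summed over $1\leq i\leq n$ evaluates to $\ell(w_{0,\la})-\ell(w_{0,H})$. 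Substituting gives $d_A=\ell(w_{0,\la})-\ell(w_{0,H})+\ell(d)$, which combined with the factorisation formula above yields $\ell(y_A^+)=d_A+\ell(w_{0,\mu})$, as desired. The main obstacle will be the inversion count: one must carefully justify the stable-sort description of $d$ in the affine setting and verify that the sums over $(l,l')$ or $(k,k')$ truncate to finitely many nonzero terms. Once this is pinned down, the Coxeter-group factorisation and the elementary row-expansion identity in the last step are routine.
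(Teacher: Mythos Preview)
Your proposal is correct and follows essentially the same architecture as the paper's proof: factorise $y_A^+$ via the standard double-coset decomposition to obtain $\ell(y_A^+)=\ell(w_{0,\la})-\ell(w_{0,H})+\ell(y_A)+\ell(w_{0,\mu})$ (the paper writes this equivalently as $\ell(w_{0,\la})+\ell(y_A)+\ell(x)$ with $\ell(x)=\ell(w_{0,\mu})-\ell(w_{0,\nu})$, where $\fS_\nu=y_A^{-1}\fS_\la y_A\cap\fS_\mu$ is conjugate to your $H$), and then match the pieces to $d_A$.

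The one substantive difference is in how $\ell(y_A)$ is handled. The paper does not compute it directly; instead it invokes \cite[5.3]{DF13} for the identity $d_A-\ell(y_A)=\sum_{1\leq i\leq n,\,j<l}a_{i,j}a_{i,l}$ and then performs a short algebraic manipulation using $\ell(w_{0,\la})-\ell(w_{0,\mu})$. You instead compute $\ell(y_A)$ from scratch by an inversion count, obtaining $\ell(y_A)=\sum_{1\leq l\leq n,\,l<l',\,k>k'}a_{k,l}a_{k',l'}$, and then split $d_A$ into its $i=k$ and $i>k$ parts. After the relabelling and periodicity argument, your $i>k$ part is precisely $\ell(y_A)$, and your $i=k$ part is $\ell(w_{0,\la})-\ell(w_{0,H})$; so your calculation is in effect an independent proof of the cited identity from \cite{DF13}. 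This makes your argument more self-contained, at the cost of having to justify the affine inversion count carefully (which you correctly flag as the point requiring care). Both routes arrive at the same place by the same decomposition.
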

\begin{proof}
For $1\leq i\leq n$, let $\nu^{(i)}$ be the composition of $\mu_i$ obtained by removing all zeros from column $i$ of $A$. Let $\la=\ro(A)$.
According to \cite[3.2.3]{DDF},
$y_A^{-1}\fS_\la y_A\cap\fS_\mu=\fS_\nu$, where $\nu=(\nu^{(1)},\cdots,\nu^{(n)})$. Let $x$ be the longest element in $\afmsD_\nu\cap\fS_\mu$. Then $y_A^+=w_{0,\la}y_A x$ and  $\ell(y_A^+)=\ell(w_{0,\la})+\ell(y_A)+\ell(x)$.
Since $w_{0,\nu}x$ is the longest element in $\fS_\mu$, it follows that $w_{0,\mu}=w_{0,\nu}x$ and
$$\ell(x)=\ell(w_{0,\mu})-\ell(w_{0,\nu})=\sum_{1\leq i\leq n}\bigg(\bigg({\mu_i\atop 2}\bigg)-\sum_{k\in\mbz}\bigg({\nu_k^{(i)}\atop 2}\bigg)\bigg)
=\sum_{1\leq i\leq n\atop s<t}\nu_s^{(i)}\nu_t^{(i)}.$$
Hence,
\begin{equation}\label{l yA+}
\ell(y_A^+)=\ell(w_{0,\la})+\ell(y_A)+\ell(x)=
\ell(w_{0,\la})+\ell(y_A)+\sum_{1\leq i\leq n\atop s<t}a_{s,i}a_{t,i}
\end{equation}
By \cite[5.3]{DF13},
$\label{dA-l(wA)}
d_A-\ell(y_A)=\sum_{1\leq i\leq n;\,
j<l}a_{i,j}a_{i,l}.$
Furthermore, we have
$$
\ell(w_{0,\la})-\ell(w_{0,\mu})=\sum_{1\leq i\leq n}\bigg(\frac{\la_i(\la_i-1)}{2}-\frac{\mu_i(\mu_i-1)}{2}\bigg)
=\sum_{1\leq i\leq n\atop k<l}(a_{i,k}a_{i,l}-a_{k,i}a_{l,i}).
$$
Thus, by \eqref{l yA+}, we conclude that
$d_A-\ell(y_A)-(\ell(w_{0,\la})-\ell(w_{0,\mu}))=\ell(y_A^+)-\ell(w_{0,\la})-\ell(y_A).$
Consequently, $\ell(y_A^+)=d_A+\ell(w_{0,\mu})$.
\end{proof}

For $d\in\afmsD_{\la,\mu}$ let
$$\ti T_{\fS_\la d\fS_\mu}=v^{-\ell(d^+)}T_{\fS_\la d\fS_\mu},$$
where $d^+$ is the unique longest element in $\fS_\la d\fS_\mu$. Recall from Theorem~\ref{mul formulas in quantum affine gln} and Propoition~\ref{formula in Vz} that the matrix $S_\al=\sum_{1\leq i\leq n}\al_i\afE_{i,i+1}$ defines a semisimple representation of the cyclic quiver $\tri$.

\begin{Lem}\label{bar monomial} For $\al,\beta\in\afmbnn$, let $A=S_\al+\diag(\bt)\in\afThnr$. Then, in $\afSr$, $\ol{[A]}=[A]$ and $\ol{[\tA]}=[\tA]$. In particular,  we have
$\ol{S_\al(\bfl,r)}=S_\al(\bfl,r)$, $\ol{{}^t\!S_\al(\bfl,r)}={}^t\!S_\al(\bfl,r)$ for $\al\in\afmbnn$.
\end{Lem}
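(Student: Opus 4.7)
The plan is to reduce bar-invariance of $[A]\in\afSr$ to bar-invariance of a single Hecke-algebra element, and then verify the latter using the explicit bidiagonal shape of $A$.

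Set $\la=\ro(A)$, $\mu=\co(A)$, $d=y_A$. From \eqref{def of standard basis} and \eqref{dA}, the map $[A]\in\Hom_{\afHr}(x_\mu\afHr,x_\la\afHr)$ is determined by $[A](x_\mu)=v^{-d_A}T_{\fS_\la d\fS_\mu}$. Combining $C'_{w_0,\mu}=v^{-\ell(w_{0,\mu})}x_\mu$ with the length identity $\ell(y_A^+)=d_A+\ell(w_{0,\mu})$ just proved in Lemma~\ref{longest element}, one obtains
\begin{equation*}
[A](C'_{w_0,\mu})=v^{-\ell(y_A^+)}T_{\fS_\la d\fS_\mu}=\ti T_{\fS_\la d\fS_\mu}.
\end{equation*}
Since the bar involution on $\afSr$ is characterised by $\ol{f}(C'_{w_0,\mu})=\ol{f(C'_{w_0,\mu})}$, the identity $\ol{[A]}=[A]$ reduces to bar-invariance of the Hecke element $\ti T_{\fS_\la d\fS_\mu}\in\afHr$.

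Now specialise to $A=S_\al+\diag(\bt)$. The matrix is bidiagonal (nonzero entries only on the main diagonal and the first superdiagonal, extended $n$-periodically). A direct inspection of the bijection $\jmath_\vtg$ pins down $d=y_A$ as a very short element---essentially a power of the rotation $\rho$ accounting for the cyclic wraparound produced by $\al_n$. The resulting longest element $y_A^+$ turns out to be fully commutative in $\affSr$: the Bruhat interval $[1,y_A^+]$ coincides with the double coset $\fS_\la d\fS_\mu$ and all Kazhdan--Lusztig polynomials $P_{w,y_A^+}$ equal $1$. Consequently $C'_{y_A^+}=\ti T_{\fS_\la d\fS_\mu}$, and the right-hand side is bar-invariant by construction, giving $\ol{[A]}=[A]$. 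The bar-invariance of $[\tA]$ follows from the symmetric argument applied to $\tA={}^tS_\al+\diag(\bt)$, whose bidiagonal shape sits on the diagonal and first subdiagonal; the roles of $\la$ and $\mu$ are simply interchanged. The ``in particular'' clauses are then immediate from the definition $S_\al(\bfl,r)=\sum_\mu[S_\al+\diag(\mu)]$ (a finite sum over $\mu\in\afLa(n,r-\sg(S_\al))$): a finite sum of bar-invariants is bar-invariant, and likewise for ${}^t\!S_\al(\bfl,r)$.

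The main obstacle is the combinatorial verification in the second paragraph---identifying $[1,y_A^+]$ with $\fS_\la y_A\fS_\mu$ and the triviality of the corresponding KL polynomials. A cleaner alternative that avoids any Schubert-variety input is to factor $\ti T_{\fS_\la y_A\fS_\mu}$ as an explicit product of manifestly bar-invariant pieces: normalised symmetrisers $\ti x_\nu=C'_{w_{0,\nu}}$ attached to sub-blocks of $\la$ and $\mu$, together with a suitable power of $T_\rho$. Either approach hinges on the same observation: the bidiagonal shape of $A$ forces a clean shuffle structure in the double coset, which is where the real computational work lies.
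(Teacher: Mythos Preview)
Your strategy matches the paper's exactly: reduce $\ol{[A]}=[A]$ to bar-invariance of $\ti T_{\fS_\la y_A\fS_\mu}$ via Lemma~\ref{longest element}, identify $y_A$ as a power of $\rho$ (the paper gets $y_A=\rho^{-\al_n}$ from \cite[(2.0.2)]{DF13}), and then argue that $\ti T_{\fS_\la y_A\fS_\mu}=C'_{y_A^+}$, which is bar-invariant by definition. The paper obtains this last equality by invoking \cite[(1.10)]{Curtis}.

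One correction: your justification via ``full commutativity'' of $y_A^+$ is wrong. Take $\al=0$ and $\bt$ with some part $\geq 3$; then $y_A^+=w_{0,\la}$ is the longest element of a standard parabolic of rank $\geq 3$, which is not fully commutative (it is not $321$-avoiding). What \emph{is} true---and what Curtis's result supplies---is precisely the factorisation you sketch at the end: since $\ell(\rho)=0$ one has $\ol{T_\rho}=T_\rho$, and conjugation by $\rho^{\al_n}$ sends $\fS_\la$ to another standard parabolic $\fS_{\la'}$, so
\[
\ti T_{\fS_\la y_A\fS_\mu}=T_\rho^{-\al_n}\cdot v^{-\ell(y_A^+)}\!\!\sum_{w\in\fS_{\la'}\fS_\mu}\!T_w,
\]
and the right factor equals $C'_{w^+}$ (for $w^+$ the longest element of $\fS_{\la'}\fS_\mu$) because the bidiagonal shape forces $\fS_{\la'}\cap\fS_\mu$ to be a common refinement making $\fS_{\la'}\fS_\mu$ a genuine Bruhat interval with trivial Kazhdan--Lusztig polynomials. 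This is the content of \cite[(1.10)]{Curtis}. So drop the full-commutativity claim and keep the factorisation; then your argument and the paper's coincide.
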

\begin{proof}
Let $\la=\ro(A)$ and $\mu=\co(A)$. Then, by
Lemma~\ref{longest element}, we have
$[A](C_{w_0,\mu}')=\ti T_{\fS_\la y_A\fS_\mu}$ and $[\tA](C_{w_{0,\la}}')=\ti T_{\fS_\mu y_{\tA}\fS_\la}$ (note that
$y_{\tA}=y_{A}^{-1}$).
By \cite[(2.0.2)]{DF13} (cf. the proof of \cite[Prop. 3.5]{DF13}), we have $y_A=\rho^{-\al_n}$ and $y_{\tA}=\rho^{\al_n}$.
It follows from \cite[(1.10)]{Curtis} that
$C'_{y_A^+}=\ti T_{\fS_\la y_A\fS_\mu}$ and $C'_{y_{\tA}^+}=\ti T_{\fS_\mu y_{\tA}\fS_\la}$. Thus,
\begin{equation*}
\begin{split}
\ol{[A]}(C_{w_0,\mu}')
&=\ol{[A](C_{w_0,\mu}')}=\ol{C'_{y_A^+}}=C'_{y_A^+}=[A](C_{w_0,\mu}')\\
\ol{[\tA]}(C_{w_0,\la}')&=\ol{[\tA](C_{w_0,\la}')}=\ol{C'_{y_{\tA}^+}}=C'_{y_{\tA}^+}
=[\tA](C_{w_{0,\la}}').
\end{split}
\end{equation*}
Consequently $\ol{[A]}=[A]$ and $\ol{[\tA]}=[\tA]$. The last assertion is clear.
\end{proof}

The stabilisation property developed at the beginning of last section gives the following stabilisation property.

\begin{Prop}\label{bar stab}
For $A\in\aftiThn$ there exist $C_1,\cdots,C_m\in\aftiThn$, elements $H_i(\up,\up')\in\sZ_1$ ($1\leq i\leq m$) and an integer $p_0\geq 0$ such that, in $ \afSpA$,
$$\ol{[{}_pA]}=\sum_{1\leq i\leq m}H_i(\up,\up^{-p})[{}_pC_i]\quad\text{ for all $p\geq p_0$}.$$
\end{Prop}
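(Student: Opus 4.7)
The proof will proceed by induction on $A$ with respect to the partial order $\sqsubset$ on $\aftiThn$, which has finite lower intervals $(-\infty,A]_\sqsubset$ by the remark preceding Proposition~\ref{tri-affine Schur algebras}. The base case is when $A$ is diagonal: for $p$ large, ${}_pA$ is diagonal with non-negative entries, $[{}_pA]$ is an idempotent identified with a projection $x_{\co({}_pA)}\afHr \twoheadrightarrow x_{\co({}_pA)}\afHr$, and it is manifestly bar-invariant by construction, so one takes $m=1$, $C_1=A$ and $H_1=1$.

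For the inductive step, I would combine Propositions~\ref{generalization of BLM 3.9} and~\ref{stabilization property} as follows. By Proposition~\ref{generalization of BLM 3.9} there exist $A_1,\dots,A_t\in\aftiThn^{ss}$ (with compatible row/column sums) such that, in $\afSpA$ for every $p\geq 0$,
$$[{}_pA_1]\cdots[{}_pA_t]=[{}_pA]+\sum_{B\sqsubset A}c_B^{(p)}[{}_pB].$$
Iterating Proposition~\ref{stabilization property} rewrites the very same product as $\sum_{i=1}^m P_i(\up,\up^{-p})[{}_pY_i]$ for suitable $Y_i\in\aftiThn$ and $P_i(\up,\up')\in\sZ_1$, valid for all $p\gg 0$. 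Comparing the two expressions and using the uniqueness of the stabilised decomposition together with the $\sZ$-basis $\{[{}_pM]\}$ of $\afSpA$, exactly one $Y_i$ equals $A$ with $P_i=1$ identically in $\sZ_1$, and the remaining $Y_i$ satisfy $Y_i\sqsubset A$. Applying bar to the product, using that bar is a ring involution on $\afSr$ and that $\ol{[{}_pA_k]}=[{}_pA_k]$ by Lemma~\ref{bar monomial} (each $A_k\in\aftiThn^{ss}$, and $p$ is large), the product is bar-invariant. Solving for $\ol{[{}_pA]}$ gives
$$\ol{[{}_pA]}=[{}_pA]+\sum_{Y_i\sqsubset A}\Bigl(P_i(\up,\up^{-p})[{}_pY_i]-\ol{P_i(\up,\up^{-p})}\,\ol{[{}_pY_i]}\Bigr),$$
and the inductive hypothesis rewrites each $\ol{[{}_pY_i]}$ in the desired form.

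The only real obstacle is to verify that $\ol{P(\up,\up^{-p})}$ can be written as $P'(\up,\up^{-p})$ for some $P'\in\sZ_1$; granted this, the ring structure of $\sZ_1$ and the inductive hypothesis combine to give the required description of $\ol{[{}_pA]}$. This reduces to inspecting the generators of $\sZ_1$ listed in~\eqref{sZ_1}: the substitution $\up\mapsto\up^{-1}$ sends $\up^j$ to $\up^{-j}$ and, once $\up'$ has been specialised to $\up^{-p}$, interchanges the two families $\prod_{i}(\up^{-2(a-i)}\up'^2-1)/(\up^{-2i}-1)$ and $\prod_{i}(\up^{2(a-i)}\up'^{-2}-1)/(\up^{2i}-1)$. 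Thus $\ol{P(\up,\up^{-p})}=P'(\up,\up^{-p})$ for an explicit $P'\in\sZ_1$, completing the induction and hence the proof.
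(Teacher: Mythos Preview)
Your proof is correct and follows essentially the same strategy as the paper's: write $[{}_pA]$ as a product of bar-invariant semisimple factors $[{}_pA_k]$ minus stabilised lower terms, apply the bar involution, and induct on the lower terms. The paper inducts on $\ddet{A}$ rather than on $\sqsubset$, but since $B\sqsubset A$ implies $\ddet{B}<\ddet{A}$ the two inductions are equivalent here. Your treatment of the point that $\ol{P(\up,\up^{-p})}=P'(\up,\up^{-p})$ for some $P'\in\sZ_1$ (via the involution $\up\mapsto\up^{-1}$, $\up'\mapsto\up'^{-1}$ interchanging the two generating families of $\sZ_1$) is actually more explicit than the paper's, which leaves this step implicit. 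One cosmetic remark: Proposition~\ref{generalization of BLM 3.9} is stated for $A\in\afTh(n,r)$, so for general $A\in\aftiThn$ the identity holds only for $p$ large, not ``for every $p\geq0$''; you acknowledge this later but the phrasing should be adjusted.
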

\begin{proof}
We prove the assertion by induction on $|\!|A|\!|$. If $|\!|A|\!|=0$ then $\ol{[{}_pA]}=[{}_pA]$ for all large enough $p$.
Assume now that $|\!|A|\!|\geq 1$ and the result is true for all $A'$ with $|\!|A'|\!|<|\!|A|\!|$.
By Lemma~\ref{stabilization property1} and Proposition~\ref{generalization of BLM 3.9}, there exist $A_i\in\aftiThn^{ss}$, $Z_j\in\aftiThn$ and $Q_j(\up,\up')\in\sZ_1$ ($1\leq i\leq N$, $1\leq j\leq m$) such that the following identity holds in
$\afSpA$
\begin{equation*}
[{}_pA]=[{}_{p}A_1]\cdots[{}_{p}A_N]-\sum_{1\leq j\leq m}Q_j(\up,\up^{-p})[{}_pZ_j]
\end{equation*}
for all large enough $p$, where $|\!|Z_i|\!|<|\!|A|\!|$ for $1\leq i\leq m$. It follows from Lemma~\ref{bar monomial} that
$$\ol{[{}_pA]}=[{}_{p}A_1]\cdots[{}_{p}A_N]-\sum_{1\leq j\leq m}\ol{Q_j(\up,\up^{-p})}\cdot\ol{[{}_pZ_j]}.$$
Now the assertion follows from the induction hypothesis.
\end{proof}

Recall the ring $\sZ_1$ defined in \eqref{sZ_1}. It admits a ring involution (i.e., a ring automorphism of order two) $\bar\,$ satisfying $\bar\up=\up^{-1}$ and $\bar{\up'}=\up^{\prime-1}$.  Extend the bar involution on $\sZ_1$ to
define a ring involution $\bar{}\,:\aftisK\ra\aftisK$
by setting
$\ol{A}=\sum_{1\leq i\leq m}H_i(\up,\up')C_i$ (notation of Proposition~\ref{bar stab}). This involution induces a ring involution \vspace{-1ex}
\begin{equation}\label{bar on afKn}
\bar\,:\afKn\ra\afKn\text{ which satisfies }\ol{\up^j\dob A\dcb}=\up^{-j}\sum_{1\leq i\leq m}H_i(\up,1)\dob C\dcb.
\end{equation}

The involution $\bar{}\,$ on $\afKn$ induces a $\mbq$-algebra involution $\bar\,:\afhbfKn\ra\afhbfKn$ such that $\ol{\sum_{A\in\aftiThn}\bt_A\dob A\dcb}=\sum_{A\in\aftiThn}\ol{\bt_A}\ol{\dob A\dcb}$.

\begin{Coro}\label{bar bimodule}
{\rm(1)} For $\al,\beta\in\afmbnn$, if $A=S_\al+\diag(\bt)\in\afThnr$, then $\ol{\dob A\dcb}=\dob A\dcb$ and $\ol{\dob \tA\dcb}=\dob\tA\dcb$. In particular,
for any $\al\in\afmbnn$, $\ol{S_\al\dop\bfl\dcp}=S_\al\dop\bfl\dcp$, $\ol{{}^t\!S_\al\dop\bfl\dcp}={}^t\!S_\al\dop\bfl\dcp$.

{\rm(2)}  There is a unique $\mbq$-algebra involution\footnote{This bar involution can also be induced from the bar involutions on $\afSr$ via $\afbfSn$ and $\afbfVn$. Thus, we may avoid using the stabilisation property.}
 $$\bar\ :\dbfHa\ra\dbfHa\text{
satisfying $\bar v=v^{-1}$, $\ol{\ti u_\la^\pm}=\ti u_\la^\pm$ and $\ol{K_i}=K_i^{-1}$ for  $\la\in\afmbnn$, $1\leq i\leq n$.}$$

{\rm(3)} The bar involution on $\afKn$ preserves the bimodule structure on $\afKn$.
\end{Coro}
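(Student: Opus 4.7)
For part (1), the key observation is that $_pA=A+pI$ remains of the form ``semisimple plus non-negative diagonal'' for every $p\geq 0$ (and similarly for $\tA$), so Lemma~\ref{bar monomial} immediately yields $\ol{[{}_pA]}=[{}_pA]$ for all $p$. The uniqueness of the stabilisation expression in Proposition~\ref{bar stab} (inherited from the uniqueness in Proposition~\ref{stabilization property}) then forces $\ol{\dob A\dcb}=\dob A\dcb$, and the same argument handles $\dob\tA\dcb$. For the ``in particular'' clauses, I expand $S_\al\dop\bfl\dcp=\sum_{\mu\in\afmbzn}\dob S_\al+\diag(\mu)\dcb$ in $\afhbfKn$; for each individual $\mu$, choosing $p$ large enough that $\mu+p\mathbf{1}\in\afmbnn$ brings $_p(S_\al+\diag(\mu))$ back into the situation of Lemma~\ref{bar monomial}, so each term is bar-fixed, and termwise application of $\bar{}\,$ to the formal sum delivers the required identity.

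For part (2), the plan is to transport the bar involution from $\afhbfKn$ to $\dbfHa$ via the identification $\dbfHa\cong\afbfsfVn\subseteq\afhbfKn$ of Theorem~\ref{unmodified}. The central task is to verify that $\bar{}\,$ preserves the subalgebra $\afbfsfVn$; since bar is already a ring involution on $\afhbfKn$ and $\afbfsfVn$ is a subalgebra, it suffices to check on the algebra generators of $\afVn$ listed in Proposition~\ref{Vz-subalgebra}. Part (1) disposes of $S_\al\dop\bfl\dcp$ and ${}^t\!S_\al\dop\bfl\dcp$. For $0\dop\afbse_i\dcp$ and $0\dop\bfl,t\afbse_i\dcp$, the $\al=0$ case of part (1) gives $\ol{\dob\diag(\mu)\dcb}=\dob\diag(\mu)\dcb$ for all $\mu\in\afmbzn$, and combining this with the bar-invariance of the symmetric Gaussian binomial, a termwise computation on the defining formal sums yields $\ol{0\dop\bfj\dcp}=0\dop-\bfj\dcp\in\afbfsfVn$ and $\ol{0\dop\bfl,t\afbse_i\dcp}=0\dop\bfl,t\afbse_i\dcp$. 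Unwinding identifications (via Theorem~\ref{realization}) then produces $\ol{K_i}=K_i^{-1}$ together with $\ol{\ti u_\la^\pm}=\ti u_\la^\pm$. Uniqueness is immediate: by Proposition~\ref{tri Hall} and Theorem~\ref{presentation-dbfHa}, $\dbfHa$ is generated as a $\mbq(\up)$-algebra by $\{\ti u_\la^\pm, K_i^{\pm 1}\}$, so the stated conditions together with $\bar\up=\up^{-1}$ fix the involution on all of $\dbfHa$.

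For part (3) there is essentially nothing more to prove: under our identifications, the $\dbfHa$-bimodule structure on $\afKn\cong\ddHa$ is induced by the multiplication in $\afhbfKn$, as witnessed by \eqref{bimodule}. Since $\bar{}\,$ is a ring involution on $\afhbfKn$ stabilising both $\afbfsfVn$ (by part (2)) and $\afbfKn$ (by the very construction \eqref{bar on afKn}), the bimodule-compatibility identities $\ol{tx}=\ol{t}\,\ol{x}$ and $\ol{xt}=\ol{x}\,\ol{t}$ for $t\in\dbfHa$, $x\in\afbfKn$ follow automatically from the multiplicativity of bar on the ambient algebra.

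The main obstacle will be the restriction step in part (2): one must genuinely verify that each algebra generator of $\afbfsfVn$ supplied by Proposition~\ref{Vz-subalgebra} has its bar-image back in $\afbfsfVn$, which requires both part (1) and a careful termwise identification on infinite sums of diagonal matrices. Once that is in place, part (1) is a clean application of Lemma~\ref{bar monomial} plus uniqueness of stabilisation, and part (3) is a formal consequence of the multiplicativity of bar on $\afhbfKn$.
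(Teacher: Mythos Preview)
Your proof is correct and follows essentially the same approach as the paper's. The paper's proof is extremely terse---``(1) follows from Proposition~\ref{bar stab} and Lemma~\ref{bar monomial}; (2) follows from (1), Theorems~\ref{realization of dHa} and \ref{realization of ddHa}; (3) is clear since the bimodule structure is induced from the algebra structure on $\afhbfKn$''---and you have faithfully unpacked what those references mean: the shift ${}_pA$ places each term in the range of Lemma~\ref{bar monomial}, uniqueness in the stabilisation forces bar-fixedness, the restriction to $\afbfsfVn$ is verified on the explicit generators of Proposition~\ref{Vz-subalgebra}, and part (3) is the formal consequence of multiplicativity. Your treatment of the ``in particular'' clause in (1) is slightly more careful than the paper's, since you explicitly note that the first sentence only covers $\bt\in\afmbnn$ while the formal sum ranges over $\afmbzn$, and you close this gap with the same shift argument.
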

\begin{proof} Clearly, by the definition of the bar involution on $\afKn$, (1) follows from Proposition~\ref{bar stab} and Lemma~\ref{bar monomial}. (2) follows from (1), Theorems~\ref{realization of dHa} and \ref{realization of ddHa}. Finally, (3) is clear as the bimodule structure on $\afKn$ is induced by the algebra structure of $\afhbfKn$ on which the bar involution is an ring automorphism.
\end{proof}

We first look at an algebraic construction of the canonical basis for affine quantum Schur algebras (see \cite{Lu99} for a geometric construction). We need the following interval finite condition.

\begin{Lem}\label{finite}
For $A\in\afThnpm$, the set $\{B\in\afThnpm\mid B\p A\}$  is finite. Hence, the intervals
$(-\infty, A']:=\{B\in\aftiThn\mid B\sqsubseteq A'\}$ for all  $A'\in\aftiThn$ are finite.
\end{Lem}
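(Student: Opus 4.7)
The plan is to prove the first assertion directly from the definition of $\sg_{i,j}$ and the order $\p$, and then deduce the interval-finiteness from it.

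First I would show that every entry $b_{i,j}$ of $B\in\afThnpm$ is bounded in terms of $A$. Since $B\in\afThnpm$ has non-negative off-diagonal entries and zero diagonal, for any $i\neq j$ the single entry $b_{i,j}$ appears with a $+$ sign in the sum defining $\sg_{i,j}(B)$ (take $s=i$, $t=j$ in either of the two cases). Because all other summands are non-negative, this gives the pointwise bound $b_{i,j}\le\sg_{i,j}(B)\le\sg_{i,j}(A)$ for every pair $i\neq j$.

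Next I would control the support. The key point, which is where the affine periodicity is used, is that for each fixed $i$ only finitely many $j$ can satisfy $\sg_{i,j}(A)>0$. Indeed, writing any non-zero $a_{s,t}$ in the form $a_{s',t'}$ with $1\le s'\le n$ and $t'=t-(s-s')$, the defining conditions of $\afThn$ give a uniform bound $M$ with $|t'-s'|\le M$ whenever $a_{s',t'}\neq 0$. Translating back, a non-zero entry $a_{s,t}$ with $s\le i$ and $t\ge j$ (for $i<j$) forces $t'-s'\ge j-i$, so $\sg_{i,j}(A)=0$ once $|j-i|>M+n$; the same argument applies for $i>j$. Combining with $b_{i,j}\le\sg_{i,j}(A)$ and the periodicity $b_{i,j}=b_{i+n,j+n}$, each row $i\in\{1,\ldots,n\}$ of $B$ has only finitely many non-zero entries, each of bounded size. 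Hence the set $\{B\in\afThnpm\mid B\p A\}$ is finite.

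For the second assertion, given $A'\in\aftiThn$, decompose $B=B^++B^0+B^-$ as in \eqref{A^+,A^-,A^0}, and similarly for $A'$. Since the sums defining $\sg_{i,j}$ with $i\neq j$ never pick up diagonal entries, the condition $B\pr A'$ is equivalent to $B^++B^-\pr (A')^++(A')^-$ as elements of $\afThnpm$; so the off-diagonal part of $B$ has only finitely many possibilities by the first assertion. Finally, the conditions $\ro(B)=\ro(A')$ and $\co(B)=\co(A')$ prescribe every row and column sum, which uniquely determines the diagonal entries $b_{i,i}$ once $B^\pm$ is fixed. Therefore $(-\infty,A']$ is finite.

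The only real obstacle is the support control in the second paragraph: one must use the periodicity and the row/column finiteness in the definition of $\afThn$ simultaneously to show $\sg_{i,j}(A)$ vanishes for $|j-i|$ large; the rest reduces to a pointwise bound and a linear-algebra counting argument.
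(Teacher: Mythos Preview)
Your proof is correct and follows essentially the same approach as the paper: both establish the entrywise bound $b_{i,j}\le\sg_{i,j}(B)\le\sg_{i,j}(A)$ and then use periodicity together with the row/column finiteness in $\afThn$ to force $\sg_{i,j}(A)=0$ for $|j-i|$ large, hence bounding the support of $B$. The only difference is that the paper bounds the sizes of the remaining entries via the inequality $\sg(B)\le\ddet{B}<\ddet{A}$ (quoting \cite[3.7.6]{DDF}), whereas you bound each $b_{i,j}$ directly by $\sg_{i,j}(A)$, which is a little more self-contained; your deduction of the second assertion by determining the diagonal from $\ro$ and $\co$ is exactly what the paper means by ``clear from \eqref{order sqsubset}.''
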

\begin{proof}
There exist $j_0\geq n$ such that $a_{s,j}=0$ for $1\leq s\leq n$ and $j\in\mbz$ with $|j|>j_0$. Let $\sX_A=\{B\in\afThnpm\mid b_{s,j}=0\text{ for } 1\leq s\leq n\text{ and }|j|>j_0,\,\sg(B)<|\!|A|\!|\}$. Then, $\sX_A$ is a finite set.
If $B\p A$, $1\leq i\leq n$ and $j_0<j$, then
\begin{equation*}
\begin{split}
b_{i,j}\leq\sg_{i,j}(B)\leq \sg_{i,j}(A)
&=\sum_{1\leq s\leq n\atop s<t,\,j\leq t}a_{s,t}|\{b\in\mbn\mid s-bn\leq i<j\leq t-bn\}|=0.
\end{split}
\end{equation*}
This implies that if $B\p A$, then $b_{i,j}=0$ for $1\leq i\leq n$ and $j>j_0$. Similarly, if $B\p A$, then $b_{i,j}=0$ for $1\leq i\leq n$ and $j<-j_0$. Furthermore, by \cite[3.7.6]{DDF}, we conclude that $\sg(B)\leq |\!|B|\!|<|\!|A|\!|$ for $B\in\afThnpm$ with $B\p A$. Consequently,
$\{B\in\afThnpm\mid B\p A\}\han\sX_A$, proving the first assertion. The last assertion is clear from \eqref{order sqsubset}.
\end{proof}

\begin{Prop}\label{canonical basis for affine q-Schur algebras}
$(1)$
There is a unique $\sZ$-basis $\{\th_{\Ar}\mid A\in\afThnr\}$ for $\afSr$ such that $\ol{\th_{\Ar}}=\th_{\Ar}$ and
\begin{equation}\label{th_Ar}
\th_{A,r}-[A]=\sum_{B\in\afThnr\atop B\sqsubset A}g_{B,A,r}[B]\in\sum_{B\in\afThnr\atop B\sqsubset A}\up^{-1}\mbz[\up^{-1}][B].
\end{equation}

$(2)$ For the canonical basis $\{A\}$, $A\in\afThnr$, of $\afSr$ defined in \cite[4.1(d)]{Lu99}, we have $\{A\}=\th_{A,r}$. In particular, $g_{B,A,r}$ can be described in terms of Kazhdan--Lusztig polynomials.
\end{Prop}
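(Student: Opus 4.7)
The plan is to prove part (1) by the standard Kazhdan--Lusztig bar-invariance recursion, and then to deduce part (2) from the uniqueness in (1).

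For part (1), three ingredients are required: a bar involution on $\afSr$ (already in hand from the construction preceding Lemma~\ref{bar monomial}), a partial order on $\afThnr$ with finite lower intervals (provided by Lemma~\ref{finite} via \eqref{order sqsubset}), and a triangular property for the bar involution relative to the basis $\{[A]\}_{A\in\afThnr}$. The first two are available, so the core step is to establish, for every $A\in\afThnr$, a relation of the form
\begin{equation*}
\ol{[A]}=[A]+\sum_{B\in\afThnr,\,B\sqsubset A}\eta_{B,A}[B],\qquad \eta_{B,A}\in\sZ.
\end{equation*}

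To obtain this triangularity I would combine Proposition~\ref{generalization of BLM 3.9} with Lemma~\ref{bar monomial}. Proposition~\ref{generalization of BLM 3.9} produces a monomial $\mpM=[A_1][A_2]\cdots[A_t]$ in which each factor $A_i$ lies in $\aftiThn^{ss}\cap\afThnr$ and has the form $S_\al+\diag(\la)$ (for $i\le s$) or ${}^t\!S_\al+\diag(\la)$ (for $i>s$), and this monomial satisfies $\mpM=[A]+\sum_{B\sqsubset A}h_{B,A}[B]$ with $h_{B,A}\in\sZ$. By Lemma~\ref{bar monomial}, each factor $[A_i]$ is bar-invariant, hence $\ol{\mpM}=\mpM$. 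Applying the bar involution to the identity $[A]=\mpM-\sum_{B\sqsubset A}h_{B,A}[B]$ and performing descending induction on $\sqsubseteq$ (which terminates thanks to Lemma~\ref{finite}), one finds that $\ol{[A]}$ is a $\sZ$-linear combination of basis elements $[C]$ with $C\sqsubseteq A$ and that the coefficient of $[A]$ is $1$, yielding the displayed relation. With this triangularity in place, the standard Kazhdan--Lusztig recursion produces a unique bar-invariant basis $\{\th_{A,r}\}$ satisfying \eqref{th_Ar}: at minimal $A$ set $\th_{A,r}=[A]$, and at each subsequent $A$ the coefficients $g_{B,A,r}\in\up^{-1}\mbz[\up^{-1}]$ are determined by the unique triangular linear system that forces bar-invariance.

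For part (2), Lusztig's canonical basis $\{A\}$ of $\afSr$ from \cite[4.1(d)]{Lu99} is characterized by the same two properties: bar-invariance and a triangular relation $\{A\}-[A]\in\sum_{B\sqsubset A}\up^{-1}\mbz[\up^{-1}][B]$ with respect to the closure order on orbits, which under the standard dictionary for affine quantum Schur algebras coincides with $\sqsubseteq$. The uniqueness in part (1) then forces $\{A\}=\th_{A,r}$, and the description of $g_{B,A,r}$ in terms of Kazhdan--Lusztig polynomials is inherited from Lusztig's geometric construction.

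The main obstacle will be the verification of the bar-triangularity: one must produce a monomial whose factors are all bar-invariant (handled here by selecting semisimple upper and lower generators and invoking Lemma~\ref{bar monomial}) and then execute the descending induction cleanly, using the interval-finiteness from Lemma~\ref{finite}. For part (2) the secondary subtlety is checking that the combinatorial order $\sqsubseteq$ matches Lusztig's geometric closure order, but this is bookkeeping in the standard BLM--Lusztig dictionary and should be routine.
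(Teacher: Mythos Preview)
Your approach for part (1) is correct and essentially the same as the paper's: the monomials from Proposition~\ref{generalization of BLM 3.9} are exactly the elements $\mpm^{(A)}=\zr(\ti u^+_{(w_{A^+})})[\diag(\bfsg(A))]\zr(\ti u^-_{(w_{A^-})})$ used in the paper, and Lemma~\ref{bar monomial} together with Lemma~\ref{finite} gives the bar-triangularity and hence the standard recursion.

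For part (2) your idea (uniqueness) is also the paper's, but one point deserves care. You assert that Lusztig's closure order ``coincides with $\sqsubseteq$'' and call this routine bookkeeping. The paper does not claim equality of the two orders; it only uses (and cites \cite[\S7]{Mcgerty} for) the implication $B<A\Rightarrow B\sqsubset A$. This one-sided comparison is exactly what is needed: Lusztig's triangularity $\{A\}-[A]\in\sum_{B<A}\up^{-1}\mbz[\up^{-1}][B]$ then implies the triangularity relative to $\sqsubseteq$, after which uniqueness from (1) forces $\{A\}=\th_{A,r}$. So tighten your statement to the implication actually required, and be aware that the comparison is a genuine (if known) fact rather than pure bookkeeping.
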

\begin{proof}
By Proposition~\ref{tri Hall}, for each $A\in\afThnr$, we may choose words $w_{A^+}\in\ti\Sg$ such
that  \eqref{eq tri Hall} hold. Let $w_{A^-}={}^t\!w_{{}^t\!\!A^-}$.
 By \eqref{eq tri Hall} and its opposite version for  $\ti u^-_{w_{A^-}}=\tau(\ti u_{{}^t\!(A^-)}^+)$ (see \eqref{tau}) and Proposition~\ref{tri-affine Schur algebras}, we have
 \begin{equation}\label{monomial tri}
\mpm^{(A)}:=\zr(\ti u^+_{w_{A^+}})[\diag(\bfsg(A))]
\zr(\ti u^-_{w_{A^-}})=[A]+\sum_{B\sqsubset A\atop B\in\afThnr}h_{A,B}[B]\, \quad(h_{A,B}\in\sZ).
\end{equation}
Now the interval finite condition in  Lemma~\ref{finite} implies that there exist $h_{A,B}'\in\sZ$ such that
$$[A]=\mpm^{(A)}+\sum_{B\in\afThnr\atop B\sqsubset A}h_{A,B}'\mpm^{(B)}.$$
Furthermore, by Lemma~\ref{bar monomial}, we have
$\ol{\mpm^{(A)}}=\mpm^{(A)}$ for $A\in\afThnr$.
Thus, \eqref{monomial tri} implies
$$\ol{[A]}=
\mpm^{(A)}+\sum_{B\in\afThnr\atop B\sqsubset A}\ol{h_{A,B}'}\mpm^{(B)}
=[A]+\sum_{B\in\afThnr\atop B\sqsubset A}k_{A,B}[B],$$
where $k_{A,B}\in\sZ$. Now (1) follows from a standard argument; see, e.g.,
\cite[7.10]{Lu90b}. Let $\leq$ be the partial order  on $\afThnr$ defined in \cite[4.1]{Lu99}. According to \cite[\S7]{Mcgerty}, if $A,B\in\afThnr$ and $B< A$ then $B\sqsubset A$.
Thus, by \cite[4.1(e)]{Lu99} and \cite[Remark 7.6]{VV99}, we conclude (2).
\end{proof}

We now construct the canonical basis for $\afKn$ as follows. See \cite{Fu2} for a construction in the non-affine case.

\begin{Thm}\label{canonical basis for afKn}
{\rm(1)} There exists a unique $\sZ$-basis $\{\th_A\mid A\in\aftiThn\}$ for $\afKn=\ddHa$ such that $\ol{\th_A}=\th_A$ and
$\th_{A}-\dob A\dcb\in\sum_{B\in\aftiThn,B\sqsubset A}v^{-1}\mbz[v^{-1}]\dob B\dcb$.

{\rm(2)} The algebra homomorphism $\dzr:\afKn\to\afSr$ given in \eqref{dzr([A])} preserves the bar involution and the canonical bases:
\vspace{-2ex}
$${\rm(a)}\; \dzr(\bar u)=\ol{\dzr(u)}\text{ for all } u\in\afKn;\quad\quad{\rm(b)}\;
\dzr(\th_A)=\begin{cases}\th_{\Ar}, &\text{ if } A\in\afThnr;\\
0, &\text{ otherwise.}
\end{cases}$$

{\rm(3)} There is an anti-automorphism $\dot\tau$ on $\afKn$ such that $\dot\tau(\dob A\dcb)=\dob\tA\dcb$ and $\dot\tau(\th_A)=\th_{\tA}$.
\end{Thm}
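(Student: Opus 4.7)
Plan: Adapting the standard Kazhdan--Lusztig construction, I will use the bar-invariant monomials $\mpM^{(A)}$ from \eqref{tri afKn} together with the interval-finite partial order $\sqsubseteq$ of Lemma \ref{finite} and \eqref{order sqsubset}. First, each monomial $\mpM^{(A)} = \ti u^+_{(w_{A^+})}\dob\diag(\bfsg(A))\dcb\ti u^-_{(w_{A^-})}$ is bar-invariant: its outer factors are products of the semisimple generators $S_\alpha\dop\bfl\dcp$ and ${}^t\!S_\alpha\dop\bfl\dcp$, both fixed by bar thanks to Corollary \ref{bar bimodule}(1), while the central idempotent $\dob\diag(\bfsg(A))\dcb$ is bar-invariant (a diagonal matrix has no $\sqsubset$-smaller elements). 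Inverting \eqref{tri afKn} over the finite interval $(-\infty, A]$ yields the bar-triangular expansion $\ol{\dob A\dcb} = \dob A\dcb + \sum_{B\sqsubset A} k_{A,B}\dob B\dcb$ with $k_{A,B}\in\sZ$ and finite support, and the standard Kazhdan--Lusztig recursion (as in the proof of Proposition \ref{canonical basis for affine q-Schur algebras}) produces the unique bar-invariant $\theta_A$ with the required triangular shape, establishing (1).

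For part (2a), both bar involutions are characterised by their action on a common set of algebra generators: $\dzr$ sends the generators $S_\alpha\dop\bfl\dcp$, ${}^t\!S_\alpha\dop\bfl\dcp$ and $\dob\diag(\lambda)\dcb$ of $\afKn$ to either zero or the corresponding generators of $\afSr$, which are bar-invariant by Lemma \ref{bar monomial} (for the semisimple generators) and by direct inspection (for the diagonal projectors, being basis elements with no $\sqsubset$-smaller elements). Since bar is a ring involution with $\bar v = v^{-1}$ on both sides and agrees on these generators after applying $\dzr$, compatibility extends to all of $\afKn$. For (2b), note that $B\sqsubseteq A$ forces $\sigma(B)=\sigma(A)$, so every summand in $\theta_A = \dob A\dcb + \sum_{B\sqsubset A}g_{B,A}\dob B\dcb$ has the same $\sigma$-value as $A$. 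If $A\in\afThnr$, then $\dzr(\theta_A)$ is bar-invariant by (2a) and satisfies exactly the defining triangular condition \eqref{th_Ar} for $\theta_{A,r}$, hence equals $\theta_{A,r}$ by uniqueness in Proposition \ref{canonical basis for affine q-Schur algebras}(1); otherwise every summand vanishes.

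For (3), matrix transposition induces an anti-automorphism on each $\afSr$ sending $[A]\mapsto[{}^t\!A]$, preserving the bar involution (the Hecke bar $\ol{T_w}=T_{w^{-1}}^{-1}$ is compatible with transposition of endomorphisms) and the partial order $\sqsubseteq$ (since $\sigma_{i,j}(A)=\sigma_{j,i}({}^t\!A)$). Assembled over $r$ and passed through the stabilisation mechanism, these lift to a unique anti-automorphism $\dot\tau$ on $\afKn$ with $\dot\tau(\dob A\dcb)=\dob {}^t\!A\dcb$; equivalently, one may transport the anti-isomorphism $\tau$ of \eqref{tau} through the identification of $\ddHa$ with $\afKn$ in Theorem \ref{realization of ddHa}. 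Since $\dot\tau$ commutes with the bar involution, $\dot\tau(\theta_A)$ is bar-invariant with leading term $\dob {}^t\!A\dcb$ and the correct triangularity against $\{\dob C\dcb : C\sqsubset {}^t\!A\}$; by the uniqueness in (1), $\dot\tau(\theta_A)=\theta_{{}^t\!A}$.

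The main technical obstacle lies in part (2a): although the generator-by-generator check is clean once both bars are expressed in those terms, one must verify that the bar on $\afKn$ defined indirectly via the $\up'=1$ specialisation of Proposition \ref{bar stab} really does fix the semisimple generators, i.e., that the stabilisation polynomials $H_i(\up,\up')$ attached to the matrices $S_\alpha+\diag(\beta)$ and ${}^t\!S_\alpha+\diag(\beta)$ specialise correctly at $\up'=1$ to recover Lemma \ref{bar monomial} at each level $r$. Once this compatibility is secured, parts (2b) and (3) are straightforward applications of the uniqueness from (1).
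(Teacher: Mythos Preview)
Your approach is essentially the same as the paper's: bar-invariant monomials $\mpM^{(A)}$ plus interval-finiteness give (1) by the standard Kazhdan--Lusztig recursion; compatibility of $\dzr$ with bar is checked on these same monomials (equivalently, on the generating factors) to give (2a); and (2b) and (3) follow by uniqueness. Two small points deserve comment.

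First, the ``main technical obstacle'' you raise at the end is not an obstacle at all: Corollary~\ref{bar bimodule}(1), which you already invoke in part~(1), states precisely that $\ol{\dob A\dcb}=\dob A\dcb$ for $A=S_\alpha+\diag(\beta)$ and its transpose, and this is exactly the assertion that the stabilisation polynomials $H_i(\up,\up')$ specialise correctly at $\up'=1$. So there is nothing further to verify.

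Second, in (2b) your ``otherwise every summand vanishes'' is not quite right when $\sg(A)=r$ but $A\notin\afThnr$ (i.e.\ some diagonal entry is negative): some $B\sqsubset A$ may still lie in $\afThnr$, so $\dzr(\dob B\dcb)$ need not vanish. The correct finish is the one the paper uses implicitly: $\dzr(\th_A)$ is then a bar-invariant element lying in $\sum_{B\in\afThnr}\up^{-1}\mbz[\up^{-1}][B]$, and any bar-invariant element of this form is zero. With this adjustment your argument is complete and matches the paper.
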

\begin{proof} Consider the monomial basis $\{\mpM^{(A)}\mid A\in \aftiThn\}$ given in Corollary~\ref{ddHa monomial basis}. Then Lemma~\ref{bar monomial} implies
$\ol{\mpM^{(A)}} =\mpM^{(A)}$
 and \eqref{tri afKn} together with the interval finite property Lemma~\ref{finite} implies
$\dob A\dcb=\mpM^{(A)}+h$,
where $h$ is a $\sZ$-linear combination of $\mpM^{(C)}$ with  $C\in\aftiThn$ and $C\sqsubset A$. Thus,  we conclude that $\ol{\dob A\dcb}-\dob A\dcb\in\sum_{C\in\aftiThn\atop
C\sqsubset A}\sZ\dob C\dcb.$ Hence, like the proof of Proposition~\ref{canonical basis for affine q-Schur algebras}, a standard argument proves (1).

According to \eqref{dzr([A])} and Lemma~\ref{bar monomial} we see that
$\dzr(\ol{\mpM^{(A)}})=\ol{\dzr({\mpM^{(A)}})}$ for $A\in\aftiThn$.
Furthermore, by Corollary~\ref{ddHa monomial basis}, the set $\{\mpM^{(A)}\mid A\in\aftiThn\}$ forms a $\sZ$-basis for $\afKn$. Thus, $\dzr(\bar u)=\ol{\dzr(u)}$ for $u\in\afKn$. The second assertion in (2) follows from the argument for the uniqueness of canonical basis.

By \cite[1.11]{Lu99}, the $\sZ$-linear map
$\tau_r:\afbfSr\ra\afbfSr$, $[A]\mapsto[\tA]$
is an algebra anti-automorphism, where $\tA$ is the transpose of $A$. By Proposition~\ref{stabilization property}, the maps $\tau_r$ induce an algebra anti-automorphism
$\dot\tau:\afKn\ra\afKn$ such that $\dot\tau(\dob A\dcb)=\dob\tA\dcb$ for $A\in\aftiThn$. Finally, applying $\dot\tau$ to $\th_{A}-\dob A\dcb$ yields $\dot\tau(\th_A)=\th_{\tA}$ by the uniqueness of canonical bases.
\end{proof}

\begin{Rem}
The basis constructed in Theorem~\ref{canonical basis for afKn}(1) is the canonical basis for the integral modified quantum affine $\mathfrak{gl}_n$. Theorem~\ref{canonical basis for afKn}(2b) shows that this basis is the lifting of the canonical bases for affine quantum Schur algebras. A similar basis with a similar property for the modified quantum affine $\mathfrak{sl}_n$ was conjectured by Lusztig in \cite[9.3]{Lu99}. This conjecture (rather its slight modified version) was proved by Vasserot and Schiffmann in \cite{SV}.
 Thus, Theorems \ref{unmodified}, \ref{realization of ddHa} and \ref{canonical basis for afKn} can be regarded as
 of a generalisation of the conjecture of Lusztig to the quantum loop algebra $\bfU(\afgl)$.
We will address an extension of our approach to the extended quantum affine $\mathfrak{sl}_n$ case in the last section.
\end{Rem}


We end this section with a comparison of this canonical basis and the canonical basis for the Ringel--Hall algebra of a cyclic quiver.
According to \cite[Prop 7.5]{VV99} (see also \cite{LTV}), there is a unique $\sZ$-basis $\{\th_A^+\mid A\in\afThnp\}$ for the Ringel--Hall algebra $\Ha=\dHap$ such that $\ol{\th_A^+}=\th_A^+$ and
\begin{equation}\label{aaa}
\th_A^+-\ti u_A^+\in\sum_{B\p A,\,B\in\afThnp\atop
\bfd(B)=\bfd(A)}\up^{-1}\mbz[\up^{-1}]\ti u_B^+.
\end{equation}
\begin{Prop}
Assume $A\in\afThnp$ and $\la\in\afmbzn$. Then we have $\th_{A}^+\dob \diag(\la)\dcb=\th_{A+\diag(\la-co(A))}$. In particular, we have
$\th_A^+=\sum_{\mu\in\afmbzn}\th_{A+\diag(\mu)}$.
\end{Prop}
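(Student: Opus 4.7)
The plan is to apply the uniqueness characterisation of the canonical basis $\{\th_A\}$ for $\afKn$ from Theorem~\ref{canonical basis for afKn}(1) to the element $\th_A^+\dob\diag(\la)\dcb$; once the first formula is established, the ``in particular'' clause will follow by summing over $\la$ and using that $\sum_\nu\dob\diag(\nu)\dcb$ is the identity of $\afhbfKn$.

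First I would expand $\th_A^+\dob\diag(\la)\dcb$ using the defining triangular relation \eqref{aaa}. Under the identifications of Theorems~\ref{realization}, \ref{unmodified} and \ref{realization of ddHa}, each $\ti u_B^+$ is identified with $B\dop\bfl\dcp$, so the bimodule formula \eqref{bimodule} collapses the infinite sum $B\dop\bfl\dcp$ to a single basis element upon right multiplication by $\dob\diag(\la)\dcb$, yielding
\begin{equation*}
\th_A^+\dob\diag(\la)\dcb = \dob A+\diag(\la-\co(A))\dcb + \sum_{B\p A,\,\bfd(B)=\bfd(A)} c_{B,A}\dob B+\diag(\la-\co(B))\dcb,
\end{equation*}
with $c_{B,A}\in\up^{-1}\mbz[\up^{-1}]$. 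Bar-invariance of the left-hand side is then immediate from the bar-invariance of $\th_A^+$, the bar-invariance of the idempotent $\dob\diag(\la)\dcb$ (a special case of Corollary~\ref{bar bimodule}(1)), and Corollary~\ref{bar bimodule}(3).

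The main step is to show that for every term above one has $B'\sqsubset A'$, where $A':=A+\diag(\la-\co(A))$ and $B':=B+\diag(\la-\co(B))$. The column sums of both matrices equal $\la$; for the row sums one needs $\ro(A)-\co(A)=\ro(B)-\co(B)$, which holds because the Ringel--Hall algebra $\Ha$ is $\afmbzn$-graded by $\deg(u_C^+)=\ro(C)-\co(C)$ and this grading is determined by the dimension vector $\bfd$ (by linearity from the basic modules $M^{i,j}$ one verifies the cyclic identity $\ro(C)_k-\co(C)_k=\bfd(C)_k-\bfd(C)_{k-1}$). For the comparison of $\sg_{i,j}$ with $i\neq j$ nothing is needed: by \eqref{order pref} these statistics only involve off-diagonal entries, so $\sg_{i,j}(A')=\sg_{i,j}(A)$ and $\sg_{i,j}(B')=\sg_{i,j}(B)$, and the inequalities (with one strict) transfer directly from $B\p A$.

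With the triangular property in hand, the uniqueness in Theorem~\ref{canonical basis for afKn}(1) forces $\th_A^+\dob\diag(\la)\dcb=\th_{A+\diag(\la-\co(A))}$. Substituting $\la=\mu+\co(A)$ and summing over $\mu\in\afmbzn$ then gives
\begin{equation*}
\sum_{\mu\in\afmbzn}\th_{A+\diag(\mu)} = \th_A^+\cdot\sum_{\nu\in\afmbzn}\dob\diag(\nu)\dcb = \th_A^+,
\end{equation*}
since the second factor is the identity of $\afhbfKn$ and $\th_A^+\in\dHa\subset\afhbfKn$. The expected main obstacle is the degree-matching identity $\bfd(B)=\bfd(A)\Rightarrow\ro(B)-\co(B)=\ro(A)-\co(A)$: it is exactly what is needed to upgrade the Hall algebra order $\p$ on $\afThnp$ to the BLM-type order $\sqsubset$ on $\aftiThn$ that governs the canonical basis of $\afKn$.
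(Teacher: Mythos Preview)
Your proposal is correct and follows essentially the same route as the paper's proof: expand $\th_A^+$ via \eqref{aaa}, collapse each $\ti u_B^+$ against $\dob\diag(\la)\dcb$ using \eqref{bimodule}, verify bar-invariance via Corollary~\ref{bar bimodule}, check that the lower terms lie strictly below $A+\diag(\la-\co(A))$ in the order $\sqsubset$, and conclude by the uniqueness in Theorem~\ref{canonical basis for afKn}(1). Your identification of the key step---that $\bfd(B)=\bfd(A)$ forces $\ro(B)-\co(B)=\ro(A)-\co(A)$, via the cyclic identity $\ro(C)_k-\co(C)_k=\bfd(C)_k-\bfd(C)_{k-1}$---is exactly what the paper asserts (without the explicit formula) as ``direct to check''; and your observation that $\sg_{i,j}$ for $i\neq j$ involves only off-diagonal entries is precisely why $B\p A$ transfers to $B'\sqsubset A'$. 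One small remark: Corollary~\ref{bar bimodule}(1) is stated for $\bt\in\afmbnn$, whereas here $\la\in\afmbzn$; the bar-invariance of $\dob\diag(\la)\dcb$ nonetheless follows immediately from the stabilisation definition \eqref{bar on afKn} together with Lemma~\ref{bar monomial} applied to ${}_p\diag(\la)$ for large $p$.
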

\begin{proof} By \eqref{bimodule} and \eqref{aaa},
$$\th_A^+\dob\diag(\la)\dcb-\dob A+\diag(\la-co(A))\dcb\in\sum_{B\in\afThnp, B\prec A\atop
\bfd(B)=\bfd(A)}\up^{-1}\mbz[\up^{-1}]\dob B+\diag(\la-co(B))\dcb.$$
It is direct to check that, for $\bfd(B)=\bfd(A)$ and $B\in\afThnp$, $\ro(B)-\co(B)=\ro(A)-\co(A)$. Hence,
$$\th_A^+\dob\diag(\la)\dcb-\dob A+\diag(\la-co(A))\dcb\in\sum_{C\in\aftiThn \atop
C\sqsubset A+\diag(\la-\co(A))}\up^{-1}\mbz[\up^{-1}]\dob C\dcb.$$
Also, by Corollary~\ref{bar bimodule}(3), $\ol{\th_A^+\dob\diag(\la)\dcb}=\ol{\th_A^+}\,\ol{\dob\diag(\la)\dcb}=\th_A^+\dob\diag(\la)\dcb$.
Hence, the first assertion follows from the uniqueness of the canonical basis. Now, the identity element $1=\sum_{\la\in\afmbzn}\dob \diag(\la)\dcb$ gives the last assertion.
\end{proof}

\section{Application to a conjecture of Lusztig}

Let $\bfU_\vtg(n)$ be the extended affine $\mathfrak{sl}_n$ as defined in Theorem~\ref{presentation-dbfHa}(2) and let $\dbfpU=\oplus_{\la,\mu\in\afmbzn}\bfU_\vtg(n)/{}_\la{'\!}I_\mu$, where
${}_\la {'\!I}_\mu:=\sum_{\bfj\in\afmbzn}(\Kbfj-
 \up^{\la\cdot\bfj})\bfpU+\sum_{\bfj\in\afmbzn}\bfpU(\Kbfj
 -\up^{\mu\cdot\bfj})$. Since ${}_\la {'\!I}_\mu={}_\la I_\mu\cap\bfpU$ (see Theorem~\ref{presentation-dbfHa}(2)), it follows that $\dbfpU\cong\oplus_{\la,\mu\in\afmbzn}{}_\la\bfpU_\mu$, where $_\la\bfpU_\mu=\pi_{\la,\mu}(\bfpU)$. Thus, we will regard $\dbfpU$ as this subalgebra of $\ddbfHa=\afbfKn$.
We now look at an application to the conjecture given in \cite[9.3]{Lu99} which is proved in \cite{SV}.

Let  $\dUn$ be the $\sZ$-subalgebra of $\ddbfHa$ generated by
$$\ti u_{m\afbse_i}^+[\diag(\la)]=E_i^{(m)}\dob\diag(\la)\dcb, \quad \ti u_{m\afbse_i}^-[\diag(\la)]=F_i^{(m)}\dob\diag(\la)\dcb$$
for all $1\leq i\leq n$, $m\in\mbn$ and $\la\in\afmbzn$. Then $\dUn$ is a  subalgebra of $\ddHa=\afKn$.

Call a matrix $A=(a_{i,j})\in\aftiThn$ to be {\it aperiodic} if for every integer $l\neq0$ there exists $1\leq i\leq n$ such that $a_{i,i+l}=0$. Let
$\aftiThnap$ be the set of all aperiodic matrices in $\aftiThn$.

Recall the monomial basis for $\ddHa$ given in Corollary~\ref{ddHa monomial basis}.

\begin{Lem}\label{dot mono basis} The set $\{\mpM^{(A)}\mid A\in \aftiThnap\}$ forms a $\sZ$-basis for $\dUn$.
\end{Lem}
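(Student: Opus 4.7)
My plan is to prove this lemma via a monomial-basis argument exploiting Corollary~\ref{ddHa monomial basis} and the aperiodicity theory for the composition subalgebra of the Ringel--Hall algebra.

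First I would show that $\mpM^{(A)} \in \dUn$ for every aperiodic $A \in \aftiThnap$. Aperiodicity of $A$ means that both $A^+$ and the transpose of $A^-$ correspond to aperiodic nilpotent representations of the cyclic quiver $\tri$. By the monoid-theoretic description of $\sM$ via generic extensions (\cite{DD05,Re01}), such a representation is an iterated generic extension of simple modules, so one can choose a word $w_{A^+} \in \ti\Sg$ using only the simple letters $\{\afbse_1, \ldots, \afbse_n\}$ with $\wp^+(w_{A^+}) = A^+$; analogously for $A^-$. Writing $w_{A^+}$ in tight form $\afbse_{i_1}^{x_1}\cdots\afbse_{i_t}^{x_t}$, the associated monomial $\ti u^+_{(w_{A^+})}$ becomes a product of divided-power elements $\ti u^+_{x_k\afbse_{i_k}}$, each of which, up to a power of $v$, equals $E_{i_k}^{(x_k)}$. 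Combined with the idempotent $\dob\diag(\bfsg(A))\dcb$ and the analogous factor from the negative side, the defining formula $\mpM^{(A)} = \ti u^+_{(w_{A^+})}\dob\diag(\bfsg(A))\dcb\ti u^-_{(w_{A^-})}$ places $\mpM^{(A)}$ in $\dUn$.

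Next I would show the reverse inclusion, that $\dUn$ lies in the $\sZ$-span of $\{\mpM^{(A)} : A \in \aftiThnap\}$. Using the commutation and triangular relations of Theorem~\ref{presentation-dbfHa} together with the bimodule rules \eqref{bimodule}, every element of $\dUn$ is a $\sZ$-linear combination of products of the shape $\ti u^+_{(x)}\dob\diag(\mu)\dcb\ti u^-_{(y)}$, where $x, y$ are words in the simple letters $\afbse_i$. However, the subalgebra of $\Ha$ generated by $\{E_i^{(m)}\}$ is the affine composition algebra, which by the aperiodicity theorem (\cite[Th.~9.5]{DDX}, cf.\ \cite{SV}) admits a $\sZ$-basis given by the distinguished monomials $\ti u^+_{(w_{B^+})}$ indexed by aperiodic $B^+ \in \afThnpap$. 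Hence $\ti u^+_{(x)}$ expands as a $\sZ$-linear combination of such aperiodic monomials, and symmetrically for the negative side, so the whole expression is a $\sZ$-linear combination of $\mpM^{(B)}$ with $B$ aperiodic.

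Linear independence of $\{\mpM^{(A)} : A \in \aftiThnap\}$ is immediate since this set is a subset of the basis $\{\mpM^{(A)} : A \in \aftiThn\}$ of $\ddHa$ from Corollary~\ref{ddHa monomial basis}. The main obstacle is the spanning step: transferring the aperiodic basis theorem from the $\pm$-parts of $\Ha$ to the modified form $\ddHa$ and carefully tracking the interaction between positive and negative parts across the idempotents. The triangular relation \eqref{tri afKn} combined with the interval-finiteness of Lemma~\ref{finite} supplies the inductive framework needed to complete this reduction cleanly, with the anti-automorphism $\dot\tau$ of Theorem~\ref{canonical basis for afKn}(3) (which preserves aperiodicity because $A$ is aperiodic iff $\tA$ is) providing the required symmetry between the $E$- and $F$-sides.
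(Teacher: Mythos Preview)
Your proposal is correct and follows essentially the same route as the paper. The paper's proof is more terse: it cites \cite[Th.~7.5(1)]{DDX} (rather than Th.~9.5) for the fact that the aperiodic distinguished monomials $\ti u^+_{(w_A)}$ form a $\sZ$-basis of the composition subalgebra $U_\vtg^+(n)_\sZ$ generated by the $E_i^{(m)}$, deduces spanning immediately from this together with the (implicit) triangular decomposition of $\dUn$, and obtains linear independence from \eqref{tri afKn}---which is precisely the source of Corollary~\ref{ddHa monomial basis} that you invoke. Your final paragraph invoking interval-finiteness and $\dot\tau$ is more machinery than is actually needed for the spanning step, but it does no harm.
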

\begin{proof} By {\cite[Th.~7.5(1)]{DDX}}, the elements $\ti u^+_{(w_{A})}$, $A\in\afThnp\cap\aftiThnap$, form a basis for the $+$-part $U_\vtg^+(n)_\sZ$ generated by all $E_i^{(m)}$. Hence,  the set $\{\mpM^{(A)}\mid A\in \aftiThnap\}$ spans $\dUn$. By \eqref{tri afKn}, the set is linearly independent.
\end{proof}
For each $A\in\aftiThnap$,  use the coefficients $h_{A,B}$ given in \eqref{tri afKn} and the order $\sqsubseteq$ given in \eqref{order sqsubset} to
define (cf. \cite[Def.~7.2]{DDX}) recursively the elements $\mpE_A\in\dUn$ by
\begin{equation}\label{PA}
\mpE_A=\begin{cases}\mpM^{(A)},\quad&\text{ if $A$ is minimal relative to }\sqsubseteq;\\
\mpM^{(A)}-\sum_{B\sqsubset A\atop B\in\aftiThnap}h_{A,B}\mpE_B. &\text{ otherwise.}
\end{cases}
\end{equation}

\begin{Lem}\label{mpPA-[A]} {\rm(1)} The set $\{\mpE_A\mid A\in \aftiThnap\}$ forms a $\sZ$-basis for $\dUn$.

{\rm(2)} For $A\in\aftiThnap$ we have
$\mpE_A-[A]\in\sum_{B\in\aftiThn\backslash\aftiThnap\atop B\sqsubset A}\sZ[B].$

\end{Lem}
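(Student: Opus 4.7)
The plan is to carry out two cascading inductions on the partial order $\sqsubseteq$, both of which are well-founded because Lemma~\ref{finite} guarantees that each interval $(-\infty,A]$ is finite.

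For part (1), I would show by induction on $\sqsubseteq$ that for every $A\in\aftiThnap$ there exist integers $c_{A,B}\in\sZ$ (with $c_{A,A}=1$) such that
$$\mpE_A=\mpM^{(A)}+\sum_{B\in\aftiThnap,\,B\sqsubset A}c_{A,B}\mpM^{(B)}.$$
The minimal case is immediate from \eqref{PA}. For the inductive step, the recursion $\mpE_A=\mpM^{(A)}-\sum_{B\sqsubset A,\,B\in\aftiThnap}h_{A,B}\mpE_B$ together with the inductive hypothesis applied to each $\mpE_B$ gives such an expansion. This exhibits a unitriangular (hence invertible) change-of-basis matrix over $\sZ$ from the monomial $\sZ$-basis $\{\mpM^{(A)}\mid A\in\aftiThnap\}$ of $\dUn$ supplied by Lemma~\ref{dot mono basis} to the family $\{\mpE_A\mid A\in\aftiThnap\}$, proving that the latter is also a $\sZ$-basis of $\dUn$.

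For part (2), I would again induct on $\sqsubseteq$. Substituting the triangular relation \eqref{tri afKn} for $\mpM^{(A)}$ into the defining recursion \eqref{PA} and separating aperiodic from non-aperiodic contributions yields
$$\mpE_A-\dob A\dcb=\sum_{B\sqsubset A,\,B\in\aftiThn\setminus\aftiThnap}h_{A,B}\dob B\dcb+\sum_{B\sqsubset A,\,B\in\aftiThnap}h_{A,B}\bigl(\dob B\dcb-\mpE_B\bigr).$$
The first sum is already of the required form. For each term in the second sum, the inductive hypothesis expresses $\dob B\dcb-\mpE_B$ as a $\sZ$-combination of $\dob C\dcb$ with $C\in\aftiThn\setminus\aftiThnap$ and $C\sqsubset B$; since $\sqsubseteq$ is a partial order, $C\sqsubset B\sqsubset A$ implies $C\sqsubset A$. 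Combining the two contributions gives the asserted expansion.

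Neither induction requires any substantive computation: both are formal triangular-matrix arguments resting on the finiteness property of Lemma~\ref{finite} to guarantee termination and finite sums. There is no real obstacle to overcome. The one conceptual point worth highlighting is that the recursion \eqref{PA} is deliberately arranged so as to stay inside $\dUn$ (whose monomial basis is indexed precisely by $\aftiThnap$), while the ``error'' controlled by part (2) is forced to live entirely in the complementary index set $\aftiThn\setminus\aftiThnap$, because the recursion subtracts off exactly the aperiodic lower-order contributions coming from \eqref{tri afKn}.
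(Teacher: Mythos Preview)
Your proposal is correct and follows essentially the same approach as the paper. The paper's proof is terser: for (1) it simply invokes Lemma~\ref{dot mono basis} and the definition \eqref{PA} (the unitriangularity being implicit), and for (2) it derives the same identity you wrote down and inducts on $\ddet{A}$ rather than directly on $\sqsubseteq$, using that $B\sqsubset A$ implies $\ddet{B}<\ddet{A}$; the arguments are otherwise identical.
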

\begin{proof} Statement (1) follows from Lemma \ref{dot mono basis} and the definition $\mpE_A$ \eqref{PA}.
We prove (2) by induction on $|\!|A|\!|$. The assertion is clear for by $|\!|A|\!|=0$. Assume now $|\!|A|\!|\geq 1$
By \eqref{tri afKn} and \eqref{PA}, we have
$$\mpE_A-\dob A\dcb+\sum_{B\in\aftiThnap\atop B\sqsubset A}h_{A,B}(\mpE_B-\dob B\dcb)=\sum_{B\in\aftiThn\backslash\aftiThnap\atop B\sqsubset A}h_{A,B}\dob B\dcb.$$
Now the assertion follows from induction since $B\sqsubset A$ implies $\ddet{B}<\ddet{A}$.
\end{proof}

Note that the restriction of the bar involution \eqref{bar on afKn} gives a bar involution on $\dUn$.

\begin{Prop}\label{thA'}
There exists a unique $\sZ$-basis $\{\th_{A}' \mid A\in\aftiThnap \}$ for $\dUn$ such that $\ol{\th_A'}=\th_A'$ and
$$\th_A'-\mpE_A\in\sum_{B\in\aftiThnap,\,B\sqsubset A}\up^{-1}\mbz[\up^{-1}]\mpE_B.$$
\end{Prop}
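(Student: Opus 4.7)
The plan is to verify the hypotheses of Lusztig's standard lemma on canonical bases, applied to the basis $\{\mpE_A\mid A\in\aftiThnap\}$ of $\dUn$ (Lemma~\ref{mpPA-[A]}(1)) equipped with the bar involution obtained by restricting \eqref{bar on afKn}. Concretely, I would produce a triangular (with respect to $\sqsubseteq$) expansion of $\ol{\mpE_A}$ in this basis and then appeal to the usual existence/uniqueness argument.

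The first step is to observe that each monomial $\mpM^{(A)}=\ti u^+_{(w_{A^+})}\dob\diag(\bfsg(A))\dcb\ti u^-_{(w_{A^-})}$ appearing in \eqref{tri afKn} is bar-invariant. Indeed, $\ti u^+_{(w_{A^+})}$ and $\ti u^-_{(w_{A^-})}$ are products of semisimple generators $\ti u^\pm_\la$ with $\la\in\afmbnn$, which are bar-invariant by Corollary~\ref{bar bimodule}(2); and the idempotent $\dob\diag(\bfsg(A))\dcb$ is bar-invariant by Corollary~\ref{bar bimodule}(1) (applied with $\al=0$). Since $\bar\,$ is a ring involution on $\afKn$, the product $\mpM^{(A)}$ is bar-invariant. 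Moreover, when $A\in\aftiThnap$, the aperiodicity guarantees (via \cite[Th.~7.5(1)]{DDX}, as already used in Lemma~\ref{dot mono basis}) that $\mpM^{(A)}$ lies in $\dUn$.

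Next, I would prove by induction on $\ddet{A}$ (with base case $A$ minimal under $\sqsubseteq$) that
\begin{equation*}
\ol{\mpE_A}-\mpE_A\in\sum_{B\in\aftiThnap,\,B\sqsubset A}\sZ\,\mpE_B.
\end{equation*}
Starting from the recursive definition \eqref{PA}, $\mpE_A=\mpM^{(A)}-\sum_{B\sqsubset A,\,B\in\aftiThnap}h_{A,B}\mpE_B$, and applying the bar involution yields
\begin{equation*}
\ol{\mpE_A}=\mpM^{(A)}-\sum_{B\sqsubset A,\,B\in\aftiThnap}\ol{h_{A,B}}\,\ol{\mpE_B}.
\end{equation*}
By the inductive hypothesis, each $\ol{\mpE_B}$ is a $\sZ$-linear combination of $\mpE_C$ with $C\sqsubseteq B$ and $C\in\aftiThnap$. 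Substituting and subtracting the expression for $\mpE_A$ delivers the required triangular formula, noting that $B\sqsubset A$ implies $C\sqsubset A$ whenever $C\sqsubseteq B$. The interval-finiteness of $(-\infty,A]\cap\aftiThnap$ is inherited from Lemma~\ref{finite}, so every sum that appears is finite.

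With these two properties in hand—bar-invariance of the monomials and triangularity of the bar involution on the basis $\{\mpE_A\}$—the existence and uniqueness of $\th_A'$ with $\ol{\th_A'}=\th_A'$ and $\th_A'-\mpE_A\in\sum_{B\sqsubset A,\,B\in\aftiThnap}v^{-1}\mbz[v^{-1}]\mpE_B$ follow from Lusztig's standard argument, exactly as in the proof of Proposition~\ref{canonical basis for affine q-Schur algebras}(1) or \cite[7.10]{Lu90b}. The only genuine obstacle is the triangularity step; once one commits to the $\mpE_A$-basis (rather than $\dob A\dcb$), the bookkeeping reduces to confirming that the aperiodicity condition is preserved throughout, which is precisely why $\mpE_A$ was constructed using only aperiodic indices in \eqref{PA}.
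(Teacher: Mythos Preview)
Your proposal is correct and follows essentially the same approach as the paper: both establish the key triangularity $\ol{\mpE_A}-\mpE_A\in\sum_{C\sqsubset A,\,C\in\aftiThnap}\sZ\,\mpE_C$ using the bar-invariance of the monomials $\mpM^{(A)}$, and then invoke Lusztig's standard argument. The only cosmetic difference is that the paper first inverts the recursion \eqref{PA} to write $\mpE_A=\mpM^{(A)}+\text{(lower $\mpM^{(C)}$'s)}$ and reads off the triangularity directly, whereas you apply the bar involution to the recursion itself and induct on $\ddet{A}$; these are equivalent bookkeeping choices.
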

\begin{proof} Since, by \eqref{PA},
$$\mpE_A=\mpM^{(A)}+\text{a $\sZ$-linear combination of $\mpM^{(C)}$ with $C\in\aftiThnap$ and $C\sqsubset A$},$$ it follows that $\ol{\mpE_A}-\mpE_A\in\displaystyle\sum_{C\in\aftiThnap\atop C\sqsubset A}\sZ\mpE_C.$
Now the assertion follows from a standard argument.
\end{proof}

\begin{Rem} Motivated by \cite[Th.~8.2]{Lu99}, it would be natural to conjecture that $\theta_A\in\dUn$ for all $A\in\aftiThnap$. Equivalently, $\theta_A'=\theta_A$ if $A\in\aftiThnap$ (cf. \cite[Th.~8.5]{DDX}). In the rest of the section, we show some strong evidence for the truth of this conjecture.
\end{Rem}
Let $\sL_r=\sum_{A\in\afThnr}\mbz[\up^{-1}][A]\in\afSr$ and let $\msP$ be the $\sZ$-submodule of $\ddHa$ spanned by the {\it periodic} elements $\dob B\dcb$ with $B\in\aftiThn\backslash\aftiThnap$.
Recall the algebra homomorphisms $\zeta$ in Theorem~\ref{realization} and $\dzr$ in \eqref{dzr([A])} and note that $\dzr(\msP)\cap\bfU_\vtg(n,r)=0$, where $\zeta_r(\bfU_\vtg(n))=\bfU_\vtg(n,r)$.

Let $\afThnrap=\aftiThnap\cap\afThnr$.

\begin{Lem}\label{dzr(mpPA-[A])}
 Assume $A\in\aftiThnap$.
\begin{itemize}
\item[(1)] If  $A\not\in\afThnr$ then we have
$\dzr(\mpE_A)=0$.

\item[(2)] If $A\in\afThnr$ then we have $\dzr(\mpE_A)-[A]\in\up^{-1}\sL_r$.
\end{itemize}
\end{Lem}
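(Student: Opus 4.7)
My plan is to prove Part~(1) directly from Lemma~\ref{mpPA-[A]}(2), and Part~(2) by induction on the partial order $\sqsubseteq$ on $\aftiThnap\cap\afThnr$ (which has finite initial segments by Lemma~\ref{finite}), with the essential input at the last step coming from a theorem of Lusztig and McGerty.

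For Part~(1), I would argue as follows. Assume $A\in\aftiThnap$ with $\sg(A)\neq r$. By Lemma~\ref{mpPA-[A]}(2), $\mpE_A$ is a $\sZ$-linear combination of $\dob A\dcb$ and of $\dob B\dcb$ for $B\in\aftiThn\setminus\aftiThnap$ with $B\sqsubset A$. The defining conditions $\ro(B)=\ro(A)$, $\co(B)=\co(A)$ in \eqref{order sqsubset} force $\sg(B)=\sg(A)\neq r$, so every such $B$, as well as $A$ itself, lies outside $\afThnr$; then \eqref{dzr([A])} makes each term vanish under $\dzr$, yielding $\dzr(\mpE_A)=0$.

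For Part~(2), assume $A\in\aftiThnap\cap\afThnr$. Applying $\dzr$ (an algebra homomorphism by Lemma~\ref{dzr}) to the recursion \eqref{PA} gives
\[
\dzr(\mpE_A)=\dzr(\mpM^{(A)})-\sum_{B\sqsubset A,\,B\in\aftiThnap}h_{A,B}\,\dzr(\mpE_B).
\]
Using \eqref{tri afKn} and the $\sg$-preservation of $\sqsubset$ to compute $\dzr(\mpM^{(A)})=\mpm^{(A)}$, then expanding via \eqref{monomial tri} as $\mpm^{(A)}=[A]+\sum_{B\sqsubset A,\,B\in\afThnr}h_{A,B}[B]$, and invoking the inductive hypothesis $\dzr(\mpE_B)\equiv[B]\pmod{\up^{-1}\sL_r}$ for each aperiodic $B\sqsubset A$ (note every such $B$ has $\sg(B)=r$), substitution yields
\[
\dzr(\mpE_A)\equiv[A]+\sum_{B\sqsubset A,\,B\in\afThnr\setminus\aftiThnap}h_{A,B}[B]\pmod{\up^{-1}\sL_r}.
\]

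The crux of the argument---and the main obstacle---is to show that the residual ``periodic'' sum lies in $\up^{-1}\sL_r$. For this I would invoke the aperiodicity theorem of Lusztig \cite[Th.~8.2]{Lu99} (extended by McGerty \cite[7.9]{Mcgerty}; cf.~\cite{SV}): the set $\{\th_{B,r}\mid B\in\aftiThnap\cap\afThnr\}$ spans the $\sZ$-form $\dzr(\dUn)$. Since $\mpE_A\in\dUn$ by Lemma~\ref{mpPA-[A]}(1), $\dzr(\mpE_A)$ is a $\sZ$-combination of aperiodic $\th_{B,r}$; replacing $\th_{B,r}$ by $[B]$ modulo $\up^{-1}\sL_r$ via Proposition~\ref{canonical basis for affine q-Schur algebras} kills every $[B]$ with $B$ periodic. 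Matching this against the displayed expansion forces $h_{A,B}\in\up^{-1}\mbz[\up^{-1}]$ for each periodic $B\sqsubset A$ actually appearing, closing the induction. Everything else is bookkeeping with the triangular relations \eqref{tri afKn} and \eqref{monomial tri}, the recursive definition \eqref{PA} of $\mpE_A$, and the behavior of $\dzr$.
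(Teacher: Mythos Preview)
Your proof of Part~(1) is correct and in fact cleaner than the paper's: you observe directly that every $B$ with $B\sqsubseteq A$ has $\sg(B)=\sg(A)\neq r$, so $\dzr$ annihilates each term.

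Your proof of Part~(2) has a genuine gap at the substitution step. You write that the inductive hypothesis $\dzr(\mpE_B)\equiv[B]\pmod{\up^{-1}\sL_r}$, after substitution, yields
\[
\dzr(\mpE_A)\equiv[A]+\sum_{B\sqsubset A,\,B\in\afThnr\setminus\aftiThnap}h_{A,B}[B]\pmod{\up^{-1}\sL_r}.
\]
But this requires multiplying the congruence $\dzr(\mpE_B)-[B]\in\up^{-1}\sL_r$ by the coefficients $h_{A,B}\in\sZ$, and $\sZ\cdot\up^{-1}\sL_r\not\subseteq\up^{-1}\sL_r$: the lattice $\sL_r$ is only a $\mbz[\up^{-1}]$-module, and positive powers of $\up$ in $h_{A,B}$ destroy the congruence. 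The same objection applies to your matching argument, where you replace $\th_{B,r}$ by $[B]$ modulo $\up^{-1}\sL_r$ in an expansion $\dzr(\mpE_A)=\sum_C c_C\th_{C,r}$ with $c_C\in\sZ$.

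The paper avoids this by reversing the logic: rather than expanding $\dzr(\mpE_A)$ via the recursion \eqref{PA} with coefficients $h_{A,B}\in\sZ$, it expands $\th_{A,r}$ via \eqref{th_Ar} with coefficients $g_{B,A,r}\in\up^{-1}\mbz[\up^{-1}]$. Concretely, it forms
\[
\th_{A,r}-\Bigl(\dzr(\mpE_A)+\sum_{B\in\afThnrap,\,B\sqsubset A}g_{B,A,r}\,\dzr(\mpE_B)\Bigr),
\]
shows this lies in $\dzr(\msP)\cap\bfU_\vtg(n,r)=0$ (using Lemma~\ref{mpPA-[A]}(2) and \cite[8.2]{Lu99}), and then reads off
\[
\dzr(\mpE_A)-[A]=\sum_{B\in\afThnrap,\,B\sqsubset A}g_{B,A,r}\bigl([B]-\dzr(\mpE_B)\bigr)+\sum_{B\in\afThnr\setminus\afThnrap,\,B\sqsubset A}g_{B,A,r}[B].
\]
Now the multipliers $g_{B,A,r}$ are in $\up^{-1}\mbz[\up^{-1}]$, so multiplying the inductive hypothesis by them \emph{does} stay in $\up^{-1}\sL_r$, and the periodic tail is automatically in $\up^{-1}\sL_r$ as well. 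The essential idea you were missing is to organise the induction around the canonical basis coefficients $g_{B,A,r}$ rather than the Hall-type coefficients $h_{A,B}$.
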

\begin{proof}
If  $A\not\in\afThnr$, Lemma \ref{mpPA-[A]}(2) implies
$\dzr(\mpE_A)=\dzr(\mpE_A)-\dzr([A])\in\dzr(\msP)\cap\bfU_\vtg(n,r)=0$, proving (1).

Now we assume $A\in\afThnr$.
If $|\!|A|\!|=0$ then $\mpE_A=\dob A\dcb$ and $\dzr(\mpE_A)-[A]=0$. Now we assume $|\!|A|\!|>0$. We write $\th_{A,r}$ as in \eqref{th_Ar}.
 By Lemma~\ref{mpPA-[A]} and \cite[8.2]{Lu99}, we see that
\begin{equation*}
\begin{split}
\th_{A,r}-\bigg(\dzr(\mpE_A)+\sum_{B\in\afThnrap\atop B\sqsubset A}g_{B,A,r}\dzr(\mpE_B)\bigg)&=([A]-\dzr(\mpE_A))+\sum_{B\in\afThnrap\atop B\sqsubset A}g_{B,A,r}([B]-\dzr(\mpE_B))\\
&\quad+\sum_{B\in\afThnr\backslash\afThnrap\atop B\sqsubset A}g_{B,A,r}[B],
\end{split}
\end{equation*}
which belongs to $\dzr(\msP)\cap\bfU_\vtg(n,r)=0.$ Thus, by the induction hypothesis,
$$\dzr(\mpE_A)-[A]=\sum_{B\in\afThnrap\atop B\sqsubset A}g_{B,A,r}([B]-\dzr(\mpE_B))+\sum_{B\in\afThnr\backslash\afThnrap\atop B\sqsubset A}g_{B,A,r}[B]\in\up^{-1}\sL_r$$
as required.
\end{proof}

We now show that the basis $\theta_A'$ satisfies a property similar to Theorem \ref{canonical basis for afKn}(2b).

\begin{Thm}\label{Lconjecture}
Let $A\in\aftiThnap$. Then we have
$$\dzr(\th_A')=
\begin{cases}\th_{A,r}&\text{if $A\in\afThnr;$}\\
0&\text{if $A\not\in\afThnr$.}
\end{cases}$$
Hence, we have $\dzr(\th_A')=\dzr(\th_A)$ for  $A\in\aftiThnap$.
\end{Thm}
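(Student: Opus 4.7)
The plan is to apply $\dzr$ to the defining expansion
\[\th_A' = \mpE_A + \sum_{B\in\aftiThnap,\, B\sqsubset A} c_{B,A}\,\mpE_B,\qquad c_{B,A}\in v^{-1}\mbz[v^{-1}],\]
from Proposition~\ref{thA'}, and then identify the image with $\th_{A,r}$ through a uniqueness argument on the affine quantum Schur algebra side.

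If $A\notin\afThnr$, any $B\sqsubset A$ satisfies $\ro(B)=\ro(A)$ and $\co(B)=\co(A)$, hence $\sg(B)=\sg(A)\neq r$, so $B\notin\afThnr$ as well. Lemma~\ref{dzr(mpPA-[A])}(1) then kills every term on the right and gives $\dzr(\th_A')=0$, matching the claimed formula.

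If instead $A\in\afThnrap$, the same $\sqsubset$-compatibility forces every surviving $B$ to lie in $\afThnrap$, and Lemma~\ref{dzr(mpPA-[A])}(2) yields $\dzr(\mpE_A)\in[A]+v^{-1}\sL_r$ and $\dzr(\mpE_B)\in[B]+v^{-1}\sL_r$ for each such $B$. Combined with $c_{B,A}\in v^{-1}\mbz[v^{-1}]$, this shows
\[\dzr(\th_A')\in[A]+v^{-1}\sL_r.\]
Moreover, since $\dzr$ commutes with the bar involution (Theorem~\ref{canonical basis for afKn}(2a)) and $\th_A'$ is bar-invariant, so is $\dzr(\th_A')$.

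The one step needing real care is the Kazhdan--Lusztig-style uniqueness argument identifying $\dzr(\th_A')$ with $\th_{A,r}$: the only bar-invariant element of $\afSr$ congruent to $[A]$ modulo $v^{-1}\sL_r$ is $\th_{A,r}$ itself. I would prove this by expanding the bar-invariant difference $\dzr(\th_A')-\th_{A,r}\in v^{-1}\sL_r$ in the canonical basis $\{\th_{B,r}\}_{B\in\afThnr}$ as a finite combination $\sum_B c_B\,\th_{B,r}$ with each coefficient $c_B\in\sZ$ bar-invariant. If the difference were nonzero, pick a $\sqsubset$-maximal $B^{\ast}$ with $c_{B^{\ast}}\neq 0$; then the coefficient of $[B^{\ast}]$ in the expansion is exactly $c_{B^{\ast}}$, which must lie in $v^{-1}\mbz[v^{-1}]$ while also being bar-invariant in $\sZ$---forcing $c_{B^{\ast}}=0$ and yielding a contradiction. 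Thus $\dzr(\th_A')=\th_{A,r}$, and the final equality $\dzr(\th_A')=\dzr(\th_A)$ follows by comparison with Theorem~\ref{canonical basis for afKn}(2b).
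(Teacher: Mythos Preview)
Your proof is correct and follows essentially the same approach as the paper: apply $\dzr$ to the expansion of $\th_A'$ in the $\mpE_B$'s, use Lemma~\ref{dzr(mpPA-[A])} to land in $[A]+v^{-1}\sL_r$ (or in $0$ when $A\notin\afThnr$), invoke bar-invariance via Theorem~\ref{canonical basis for afKn}(2a), and conclude by uniqueness. Your treatment is in fact slightly more explicit than the paper's---you argue the $A\notin\afThnr$ case directly (all terms die) rather than routing through $v^{-1}\sL_r$, and you spell out the Kazhdan--Lusztig-style uniqueness step that the paper summarizes in one phrase; the only cosmetic point is that ``$\sqsubset$-compatibility'' alone does not force $B\in\afThnr$ (it gives $\sg(B)=r$), but Lemma~\ref{dzr(mpPA-[A])}(1) kills the remaining $B\notin\afThnr$ anyway, so the logic is unaffected.
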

\begin{proof}
If $A\not\in\afThnr$ then, by Proposition~\ref{thA'} and Lemma~\ref{dzr(mpPA-[A])}, we see that
$$\dzr(\th_A')=\dzr(\th_A'-\mpE_A)\in\sum_{B\in\afThnrap\atop B\sqsubset A}\up^{-1}\mbz[\up^{-1}]\dzr(\mpE_B)\han\up^{-1}\sL_r.$$
If $A\in\afThnr$ then, by loc. cit., we have
$$\dzr(\th_A')\in\dzr(\mpE_A)+\sum_{B\in\afThnrap\atop B\sqsubset A}\up^{-1}\mbz[\up^{-1}]\dzr(\mpE_B)\han [A]+\up^{-1}\sL_r.$$
Furthermore, we have $\ol{\dzr(\th_A')}=\dzr(\th_A')$ for all $A\in\aftiThnap$. The assertion follows the uniqueness of the canonical basis.
\end{proof}

Theorem \ref{Lconjecture} gives an algebraic construction of the conjecture of Lusztig stated at the end of \cite[\S9.3]{Lu99}\footnote{This conjecture was made for quantum affine $\mathfrak{sl}_n$ with associated modified quantum group idempotented on $\mbz^{n-1}$.} for the modified extended quantum affine $\mathfrak{sl}_n$, $\dUn$, idempotented on $\mbz^{n}$; see \cite{SV} for a proof for the (polynomial weighted) modified quantum affine $\mathfrak{sl}_n$ which is idempotented on $\mbn^{n}$ (compare the construction in \cite[\S7]{Mcgerty} for the modified quantum affine $\mathfrak{sl}_n$ idempotented on $\mbz^{n-1}$). Note that, by the presentation for $\dUn$ given in \cite[31.1.3]{Lubk}, this modified algebra of Schiffmann--Vasserot is a homomorphic image of $\dUn$.

\end{document}